\title{ Relatively hyperbolic groups with free abelian second cohomology}
\author{Michael Mihalik and Eric Swenson}
\newtheorem{theorem}{Theorem}[section]
\newtheorem{lemma}[theorem]{Lemma}
\newtheorem{corollary}[theorem]{Corollary}
\newtheorem{remark}[theorem]{Remark}
\newcounter{claimnum}
\newcounter{definitionnum}
\newenvironment{definition}{\addvspace{12pt}\refstepcounter{definitionnum}
\noindent{\bf Definition \arabic{definitionnum}.}}{\par\addvspace{12pt}}
\newenvironment{proof}{\addvspace{12pt}\noindent{\bf Proof:}}{
$\Box$\par\addvspace{12pt}}
\date{\today}
\begin{document}

\maketitle

\begin{abstract} 
Suppose $G$ is a 1-ended finitely presented group that is hyperbolic relative to $\mathcal  P$ a finite collection of 1-ended finitely presented proper subgroups of $G$. Our main theorem states that if the boundary $\partial (G,{\mathcal P})$ is locally connected and the second cohomology group $H^2(P,\mathbb ZP)$ is free abelian for each $P\in \mathcal P$, then $H^2(G,\mathbb ZG)$ is free abelian. When $G$ is 1-ended it is conjectured that $\partial (G,\mathcal P)$ is always locally connected. Under mild conditions on $G$ and the members of $\mathcal  P$ the 1-ended and local connectivity hypotheses can be eliminated and  the same conclusion is obtained. When $G$ and each member of $\mathcal P$ is 1-ended and $\partial (G,\mathcal P)$ is locally connected, we prove that a ``Cusped Space" for this pair has semistable fundamental group at $\infty$. This provides a starting point in our proof of the main theorem. 
\end{abstract}

\section{Introduction}
We are interested in an old conjecture (probably  due to H. Hopf) speculating that $H^2(G,\mathbb ZG)$ is free abelian for any finitely presented group $G$. T. Farrell \cite{FFT} proved that if $G$ is finitely presented and contains an element of infinite order then $H^2(G,\mathbb ZG)$ is either $0$, $\mathbb Z$ or not finitely generated. A result of B. Bowditch \cite{Bo04} implies $H^2(G,\mathbb ZG)=\mathbb Z$ if and only if $G$ contains a nontrivial closed surface group as a subgroup of finite index. 
For $R$ a ring, the $R$-module $H^2(G,RG)$ is torsion free (see \cite{G} 13.7.1). In Section 13.8 of \cite{G}, an example of a finite aspherical 3-pseudomanifold is constructed. Hence, its fundamental group $G$ has type $F$ and geometric dimension 3, but $H^3(G,\mathbb ZG)$ is isomorphic to $\mathbb Z_2$. This was the first exhibited example of a group of type $F_n$ for which $H^n(G,\mathbb ZG)$ is not free abelian. 
The connection between $H^k(G,\mathbb ZG)$ and $H_{k-1}(\varepsilon G)$ (the $(k-1)$-homology of the end of $G$) was explored in \cite{GM85}, also see  Section 13.7 of Geoghegan's book \cite{G}.  Our main theorem is:

\begin{theorem} \label{Hom} 
Suppose a finitely presented 1-ended group $G$ is hyperbolic relative to $\mathcal P=\{P_1,\ldots ,P_n\}$ a set of 1-ended finitely presented  subgroups (with $G\ne P_i$ for all $i$). If  the boundary $\partial (G, \mathcal P)$ is locally connected and for each $i$, $H^2(P_i,\mathbb ZP_i)$ is free abelian, then  $H^2(G;\mathbb ZG)$ is free abelian.
\end{theorem} 

When $G$ is 1-ended and hyperbolic relative to $\mathcal P$ then it may always be the case that $\partial (G,\mathcal P)$ is locally connected.  Note that there is no hypothesis on the number of ends of the $P_i$ and no local connectedness hypotheses on $\partial (G,\mathcal P)$ in the next result.

\begin{corollary} \label{Cor} 
Suppose a 1-ended finitely presented group $G$ is hyperbolic relative to $\mathcal P=\{P_1,\ldots ,P_n\}$ a set of finitely presented subgroups (with $G\ne P_i$ for all $i$). If  for each $i$, $P_i$ contains no infinite torsion subgroup and $H^2(P_i,\mathbb ZP_i)$ is free abelian, then $H^2(G;\mathbb ZG)$ is free abelian.
\end{corollary}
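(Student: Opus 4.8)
The plan is to deduce Corollary~\ref{Cor} from Theorem~\ref{Hom} by refining the peripheral structure $\mathcal P$ into one whose members are all one-ended and finitely presented and whose boundary is \emph{automatically} locally connected, thereby meeting the hypotheses of the main theorem. This requires two largely independent tasks: replacing each non-one-ended peripheral subgroup by one-ended pieces without destroying relative hyperbolicity or the free-abelian second cohomology condition, and then showing that for the refined structure local connectivity comes for free. The no-infinite-torsion hypothesis is what powers the second task.

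For the first task, fix a $P_i$ that is not one-ended. As $P_i$ is infinite, finitely presented, and contains no infinite torsion subgroup, it has two or infinitely many ends, and by Stallings' theorem together with Dunwoody's accessibility it is the fundamental group of a finite graph of groups with finite edge groups and vertex groups that are finite or one-ended; a two-ended $P_i$ splits over finite subgroups into finite vertex groups (e.g. $\mathbb Z$ and $D_\infty$), so such a $P_i$ contributes no one-ended pieces and is simply deleted from the peripheral structure. Since the edge groups are finite, standard combination and transitivity results for relative hyperbolicity show that $G$ is hyperbolic relative to the collection $\mathcal P'$ consisting of the one-ended vertex groups produced this way together with the $P_i$ that were already one-ended, the finite vertex groups being absorbed into the hyperbolic part. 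Each such vertex group $V$ is again finitely presented, and a Mayer-Vietoris computation for splittings over finite subgroups, combined with Shapiro's lemma (using the vanishing of $H^n(C,\mathbb ZP_i)$ for $n\ge 1$ and finite $C$, and the free-module decomposition of $\mathbb ZP_i$ over $\mathbb ZV$), identifies $H^2(P_i,\mathbb ZP_i)$, modulo the standard finiteness bookkeeping, with a direct sum of copies of the groups $H^2(V,\mathbb ZV)$ over the infinite vertex groups. Because a direct summand of a free abelian group is free abelian, each $H^2(V,\mathbb ZV)$ is free abelian; thus every member of $\mathcal P'$ is one-ended, finitely presented, and has free abelian second cohomology.

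It remains to establish local connectivity of $M=\partial(G,\mathcal P')$, and here I would invoke Bowditch's theory of peripheral splittings. Since $G$ is one-ended, $M$ is connected, and the members of $\mathcal P'$ are finitely presented, one-ended, and free of infinite torsion subgroups, Bowditch's results apply: a global cut point of $M$ must be a parabolic fixed point and yields a nontrivial peripheral splitting, and passing to the \emph{maximal} peripheral splitting produces a relatively hyperbolic structure whose boundary has no global cut points; by Bowditch's local-connectivity criterion, a connected boundary of such a pair with no global cut points is locally connected. Applying Theorem~\ref{Hom} to the final refined pair then gives that $H^2(G,\mathbb ZG)$ is free abelian. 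The main obstacle is precisely this last step: one must check that passing to the maximal peripheral splitting does not undo the gains of the first task, i.e. that its peripheral vertex groups remain one-ended, finitely presented, and of free abelian $H^2$, and that this refinement interacts coherently with the accessibility decomposition, so that Theorem~\ref{Hom} is legitimately applicable to the structure on which $M$ is finally shown to be locally connected.
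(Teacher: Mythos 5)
The first half of your proposal is essentially the paper's argument: Dunwoody accessibility applied to each $P_i$, refinement of the peripheral structure via combination/transitivity results for splittings over finite groups (the paper's Theorem \ref{split1}), deletion of finite and two-ended pieces, and transfer of the free-abelian $H^2$ condition to the one-ended vertex groups. For that last transfer the paper simply cites Theorem \ref{Dunw} (\cite{M87}); your Mayer--Vietoris/Shapiro sketch is the right idea, though your claim that $H^2(P_i,\mathbb ZP_i)$ decomposes as a direct sum of copies of the $H^2(V,\mathbb ZV)$ needs care when $V$ is only finitely presented (commuting $H^2$ with infinite direct sums in top degree requires more than $FP_2$); the clean statement is that $H^2(V,\mathbb ZV)$ is a direct summand of $H^2(V,\mathbb ZP_i)$ (finite direct sums always commute with cohomology), hence a subgroup of the free abelian group $H^2(P_i,\mathbb ZP_i)$, and subgroups of free abelian groups are free abelian.

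The genuine gap is in your treatment of local connectivity. Bowditch's theorem (the paper's Theorem \ref{Bow1}, i.e.\ \cite{Bow01}, Theorem 1.5) applies \emph{directly} to the refined pair $(G,\mathcal P')$: its hypotheses ask only that $G$ be one-ended and that each peripheral subgroup be finitely presented, contain no infinite torsion group, and be one- or two-ended, all of which hold for $\mathcal P'$; the conclusion is that $\partial(G,\mathcal P')$ itself is locally connected, cut points or not. The peripheral-splitting analysis and the passage to a maximal peripheral splitting are internal to Bowditch's proof, not something the user must redo. By instead proposing to apply Theorem \ref{Hom} to the relatively hyperbolic structure coming from the maximal peripheral splitting, you create the verification problem you yourself flag as ``the main obstacle'' and never resolve -- and it is not resolvable by the tools you have set up: peripheral splittings are splittings over (generally infinite) parabolic subgroups, so the finite-edge-group Mayer--Vietoris argument gives no control over $H^2(V,\mathbb ZV)$ for the new vertex groups, and their one-endedness and finite presentability are likewise unverified. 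As written, your proof is therefore incomplete. The repair is to discard the detour: conclude from Theorem \ref{Bow1} that $\partial(G,\mathcal P')$ is locally connected and apply Theorem \ref{Hom} to $(G,\mathcal P')$ itself, which is exactly what the paper does.
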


Section \ref{SS} explores what it means for a space and group to have semistable fundamental group/first homology at $\infty$. If $G$ is a 1-ended finitely presented group and $X$ is some (equivalently any) finite CW complex with $\pi_1(X)=G$ and universal cover $\tilde X$, then $G$ has semistable fundamental at $\infty$ if for any two proper rays $r,s:([0,\infty),\{0\})\to (\tilde X,\ast)$, there is a proper homotopy $H:[0,\infty) \times [0,\infty)\to \tilde X$ with $H(t,0)=r(t)$ and $H(0,t)=s(t)$ for all $t\in [0,\infty)$. We say $G$ has semistable first homology at $\infty$ if $r$ and $s$ are properly homologous. Certainly, if  a finitely presented group $G$ has semistable fundamental group at $\infty$ then $G$ also has semistable first homology at $\infty$. The group $H^2(G,\mathbb ZG)$ is free abelian if and only if $G$ has semistable first homology at $\infty$ (see Corollary \ref{Free}). At present, it is unknown if all finitely presented groups have semistable fundamental group at $\infty$, but there are many classes of groups (including the class of word hyperbolic groups) which are known to only contain groups that have semistable fundamental group at $\infty$. At this point we know of only one result other than our main theorem that concludes all members $G$ of a class of groups have free abelian second cohomology with $\mathbb ZG$ coefficients without passing through a fundamental group at $\infty$ result. I. Biswas and M. Mj \cite{BM17} prove that if $G$ is a holomorphically convex  group (in particular, if $G$ is a linear projective group) then $H^2(G,\mathbb ZG)$ is free abelian.  

Combining work of M. Bestvina and G. Mess \cite{BM91}, B. Bowditch \cite{Bow99B} and G. Swarup \cite{Swarup} one can conclude that if $G$ is a word hyperbolic group then $G$ has semistable fundamental group at $\infty$ (and so $H^2(G, \mathbb ZG)$ is free abelian).  Interestingly, our proof of Theorem \ref{Hom} does not translate into a proof that $G$ has semistable fundamental group at $\infty$ when the peripheral subgroups $P_i\in \mathcal P$ have semistable fundamental group at $\infty$. Basically the problem is that fundamental group is a pointed functor (while first homology is not). Still, the main theorem of \cite{MS18} is: 

\begin{theorem} \label{MS} (\cite{MS18}, Theorem 1.2) 
Suppose $G$ is a 1-ended finitely generated group that is hyperbolic relative to a collection of 1-ended finitely generated proper subgroups $\mathcal P=\{P_1,\ldots, P_n\}$. If $\partial (G,{\mathcal P})$ has no cut point, then $G$ has semistable fundamental group at $\infty$.
\end{theorem}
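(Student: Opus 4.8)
The plan is to run everything inside the Groves--Manning cusped space $X = X(G,\mathcal P)$. This is a proper geodesic space on which $G$ acts properly by isometries, it is $\delta$-hyperbolic (this is exactly the cusped formulation of relative hyperbolicity), and its Gromov boundary is the Bowditch boundary $\partial X = \partial(G,\mathcal P)$. Inside $X$ the Cayley graph of $G$ appears as the ``depth-$0$'' stratum, and each left coset $gP_i$ spans a combinatorial horoball whose single ideal point is a bounded parabolic point of $\partial X$, while every other boundary point is a conical limit point. Since $G$ is one-ended, $\partial X$ is a continuum, and the standing hypothesis is that it has no cut point. My first step would be to upgrade ``no cut point'' to ``locally connected'': this is the relatively hyperbolic analogue of the Bowditch--Swarup theorem \cite{Swarup, Bow99B}, and I would cite or reprove, in the cusped setting, that a connected relatively hyperbolic boundary with no cut point is locally connected. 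From here on $\partial X$ is a locally connected continuum.

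The geometric engine is the Bestvina--Mess principle \cite{BM91} that, for a hyperbolic space, local connectedness of the boundary yields semistability at infinity. Two proper rays limiting to the \emph{same} boundary point automatically fellow-travel and so are properly homotopic; the real content is interpolating between rays with different endpoints $\xi,\eta\in\partial X$. I would choose a path $\gamma\colon[0,1]\to\partial X$ from $\xi$ to $\eta$ and build a proper map of the half-strip $[0,1]\times[0,\infty)\to X$ whose restriction to each vertical segment is coarsely a geodesic ray pointing at $\gamma(s)$. Such a map is proper precisely when the interpolation is performed coherently at every scale, and local connectedness of $\partial X$ is exactly the tool that supplies, for each prescribed depth, connected boundary neighborhoods of controlled shrinking diameter so that consecutive rays stay close far out. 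Assembling these interpolations gives a proper homotopy between any prescribed pair of proper rays, which is the definition of semistable fundamental group at infinity.

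The main obstacle is the non-cocompactness of the action, i.e.\ the cusps. The classical hyperbolic-group argument leans on cocompactness to convert ``close in the boundary'' into ``close in the space at bounded expense,'' and this fails near a parabolic point $p$, where the geometry is that of a combinatorial horoball climbing to infinity. The crux is therefore to route the interpolating half-strip when $\gamma$ passes through such a $p$: the strip must ascend into the horoball over some $gP_i$ and descend again, and this ascent and descent must be kept proper and of controlled width. This is exactly where the hypothesis that each $P_i$ is one-ended is essential --- the horospheres of the combinatorial horoball are coarsely copies of the one-ended Cayley graph of $P_i$, so they are connected and remain connected ``far out,'' allowing a path approaching $p$ to be detoured through the horoball without becoming non-proper or getting trapped. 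Quantifying this against the exponential distortion of the horoball metric is where I expect the bulk of the technical work to lie.

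Finally I would transfer semistability of $X$ to the group. I would work with the $2$-complex version of the cusped space, with the combinatorial horoballs filled by $2$-cells; this space is simply connected, and its semistability at infinity is the statement actually established by the argument above. Because the horoballs are simply connected and contribute no new ends (again by one-endedness of the $P_i$), and because the depth-$0$ stratum is quasi-isometric to the Cayley graph of $G$, the proper homotopies produced in $X$ can be pushed down and compared with those of a finite $2$-complex carrying $G$; this identifies semistability of the cusped space with semistable fundamental group of $G$ at infinity. Making this last identification precise, together with the cusp-routing of the previous paragraph, constitutes the heart of the proof.
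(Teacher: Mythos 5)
Your first two steps are sound in outline: upgrading ``no cut point'' to ``locally connected'' via Bowditch's theory is indeed the right opening move, and your interpolation argument in the cusped space $X$ is essentially the proof of Theorem \ref{GM-SS} of this paper (boundary paths tracked by fans of geodesic rays, with the one-endedness of the $P_i$ used to handle excursions near parabolic points). The genuine gap is your final paragraph. Theorem \ref{MS} asserts semistability of $G$ itself, i.e.\ of the universal cover of a finite complex with fundamental group $G$, and this is \emph{not} implied by semistability of the cusped space by any soft argument: $X$ is not quasi-isometric to $G$ (horoballs are exponentially distorted), and a proper homotopy in $X$ between two rays lying in the Cayley $2$-complex $Y$ will in general dive arbitrarily deep into infinitely many horoballs. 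To ``push it down'' into $Y$ you must replace each horoball excursion by a map into the corresponding peripheral coset $gP_i$ while preserving properness; the vertical projection of a horoball onto its base is not proper, so this replacement is exactly where the work lies, and controlling it amounts to a semistability-type hypothesis on the peripheral subgroups. But Theorem \ref{MS} assumes no such hypothesis on the $P_i$ --- the paper explicitly flags this absence right after the statement --- so the transfer you describe has nothing to run on. The paper also warns about precisely this step: ``the semistability of the fundamental group at $\infty$ of the cusped space for $(G,\mathcal P)$ does not seem to be of much help in showing that $G$ has semistable fundamental group at $\infty$.'' Even in the homological setting of the main Theorem \ref{Hom}, where the hypothesis that $H^2(P_i,\mathbb{Z}P_i)$ is free abelian supplies the needed control on the cosets, the transfer requires the excision machinery of Section \ref{SimpApp} together with Theorem \ref{H1SS}, and still yields only a first-homology conclusion, since $\pi_1$ is a pointed functor while $H_1$ is not.

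So the step you defer as ``the heart of the proof'' is not a technicality to be quantified later; it is the actual content of the theorem, and the route you propose for it is the one this paper says does not work. The proof in \cite{MS18} does not factor through semistability of the cusped space: it constructs proper homotopies for the group directly, using the no-cut-point (hence locally connected) structure of $\partial(G,\mathcal P)$ to control behavior at parabolic points, which is how it avoids any semistability assumption on the $P_i$. As written, your proposal establishes (a version of) Theorem \ref{GM-SS}, not Theorem \ref{MS}.
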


Note that there is no semistability hypotheses on the peripheral subgroups $P_i$ in this last result.  On the other hand, any time $G$ splits over a proper subgroup of a peripheral  subgroup, $\partial (G,\mathcal P)$  contains a cut point. Theorem \ref{Bow1}  implies that in most cases, when $G$ is 1-ended $\partial (G,\mathcal P)$ is locally connected {\it even} when it contains cut points. 
 
Let $X$ be the cusped space for  $(G, \mathcal P)$ (defined in \S \ref{GMSpace}), and $Y\subset X$ be the Cayley 2-complex for $G$ (with finite presentation containing a subpresentation  for each of the parbolics).  The pair $(G,\mathcal P)$ is relatively hyperbolic if and only if $X$ is $\delta$-hyperbolic. Our first goal is to prove the following theorem, which is an important component in our proof of Theorem \ref{Hom}. The semistability of the fundamental group at $\infty$ of the cusped  space for $(G, \mathcal P)$ does not seem to be of much help in showing that $G$ has semistable  fundamental group at  $\infty$. 

\begin{theorem} \label{GM-SS} 
Suppose a finitely presented group $G$ is hyperbolic relative to $\mathcal P=\{P_1,\ldots ,P_n\}$ a set of 1-ended finitely presented subgroups (with $G\ne P_i$ for all $i$). If  the boundary $\partial (G, \mathcal P)$ is locally connected then the cusped space for $(G,\mathcal P)$ has semistable fundamental group at $\infty$. 
\end{theorem}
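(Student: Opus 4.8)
The plan is to treat the cusped space $X$ as a proper (locally finite), simply connected, one-ended $\delta$-hyperbolic complex and to extract semistability at $\infty$ from the topology of its Gromov boundary $Z=\partial(G,\mathcal P)$. First I would record the ambient geometry: since $G$ and each $P_i$ are finitely presented, the Cayley $2$-complex $Y$ is locally finite and simply connected, and each combinatorial horoball glued onto a coset $gP_i$ is locally finite, simply connected and one-ended; hence $X$ is proper and simply connected, and (because $G$ is one-ended and each horoball is one-ended, so $Z$ is connected) $X$ is one-ended with $\partial X=Z$ compact, metrizable and connected. Since $Z$ is assumed locally connected, the Hahn--Mazurkiewicz circle of ideas applies: a connected, locally connected, compact metric space is a Peano continuum, hence \emph{uniformly} locally path connected, i.e.\ for every $\epsilon>0$ there is $\delta>0$ so that any two points of $Z$ within $\delta$ are joined by a path in $Z$ of diameter $<\epsilon$. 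This uniform local path connectivity is the single property of the boundary the argument consumes; crucially it persists even when $Z$ has cut points, so that no no-cut-point hypothesis (as in Theorem \ref{MS}) is needed. In shape-theoretic language this is the step that will make the $Z$-boundary pointed $1$-movable, which is the standard reformulation of the semistability we seek.

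Next I would reduce semistability to a statement about geodesic rays. By the standard reformulation it is enough to prove that any two geodesic rays $r,s$ based at $\ast$, with ideal endpoints $\xi,\eta\in Z$, are properly homotopic (an arbitrary proper ray $s$ is handled by comparison with the geodesics $[\ast,s(t)]$, and two geodesic rays with the same ideal endpoint are asymptotic, hence properly homotopic through the usual fellow-traveling ladder). To build the homotopy I would take a path $\gamma$ from $\xi$ to $\eta$ in $Z$ and, using uniform local path connectivity, refine it into a nested family of chains $\xi=w^n_0,\dots,w^n_{k_n}=\eta$ with consecutive points within $\delta_n\to0$. For consecutive, nearby ideal points $w^n_i,w^n_{i+1}$ the geodesic rays $[\ast,w^n_i]$ and $[\ast,w^n_{i+1}]$ fellow-travel out to a radius $S_n\to\infty$ controlled by their Gromov product; joining their radius-$S_n$ points by short paths and concatenating over $i$ produces a path $\beta_n$ from $r(S_n)$ to $s(S_n)$ lying outside $B(S_n-O(\delta))$. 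These $\beta_n$ march to infinity and furnish the horizontal slices of the desired proper homotopy.

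The heart of the matter, and the step I expect to be the main obstacle, is upgrading this connectivity-at-infinity into a genuinely \emph{proper} homotopy, i.e.\ filling the loops bounded by $\beta_n$, $\beta_{n+1}$ and the intervening arcs of $r$ and $s$ so that the filling discs also retreat to infinity. Here I would invoke the linear isoperimetric inequality of the $\delta$-hyperbolic complex $X$: a loop of length $L$ bounds a disc that stays within $O(\delta\log L)$ of the loop, so uniformly short loops bound discs that remain far out, and the assembled discs give a proper homotopy. That the relevant loops are uniformly short is exactly what the \emph{uniform} local path connectivity of $Z$ buys, small boundary diameter transferring, via thin triangles and fellow-traveling, to bounded path length in $X$. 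The delicate point is making these length bounds uniform as $S_n\to\infty$, and this is precisely where the cusps intervene: because $G$ does not act cocompactly on $X$, constants established on the thick part $Y$ need not transport into the combinatorial horoballs, where the metric is logarithmically compressed and the local structure of $Z$ near a parabolic point differs from that near a conical point. I would therefore treat the cusps by hand, using the explicit combinatorial-horoball geometry (each horoball is canonically funneled toward its single parabolic ideal point) to verify the required fellow-traveling and isoperimetric control there, and then splice the horoball estimates to the cocompact estimates on $Y$ across the coarsely Lipschitz transition region. Assembling the discs produced in the thick part and in the cusps yields the proper homotopy between $r$ and $s$; thus the non-cocompactness localized in the cusps, rather than the presence of cut points, is where essentially all of the technical work resides.
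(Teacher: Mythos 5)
Your treatment of the core case---two geodesic rays based at $\ast$ with distinct ideal endpoints---is essentially the paper's own argument: local connectedness plus compactness makes $\partial X$ a Peano continuum, hence uniformly locally path connected; a boundary path is subdivided into fine chains; fellow-traveling converts nearby ideal points into short cross-paths $\beta_n$ at radii $S_n\to\infty$; and the ladders between consecutive cross-paths are filled by uniformly bounded discs. The paper's Lemma \ref{Trans} plays the role of your isoperimetric estimate (and, like you, it treats the horoballs by hand, sliding loops up levels before appealing to cocompactness on $Y$); this core case is exactly Lemma \ref{SmallH} and Corollary \ref{Geo}.

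The genuine gap is your parenthetical reduction of an arbitrary proper ray $s$ to a geodesic ray ``by comparison with the geodesics $[\ast,s(t)]$.'' This is where roughly half of the paper's proof lives, and the fan of geodesic segments does not do the job: assembling the thin-triangle ladders between $[\ast,s(t)]$ and $[\ast,s(t+1)]$ produces a map of $[0,\infty)\times[0,1]$ whose entire bottom edge sits at $\ast$ (every segment starts there), so the preimage of a ball about $\ast$ is unbounded and the resulting homotopy is not proper. To repair this one needs genuine geodesic \emph{rays} tracking $s$---the paper uses Lemma \ref{AlmostExt} to produce rays $r_i$ from $\ast$ passing within $\delta$ of $s(i)$---and one must then properly homotope the tails of $r_i$ and $r_{i+1}$ to each other far from $\ast$. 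Since $r_i$ and $r_{i+1}$ have \emph{distinct} endpoints at infinity, fellow-traveling and thin triangles give nothing beyond the Gromov product $(r_i\cdot r_{i+1})_\ast$; the paper must invoke the boundary machinery (Lemma \ref{SmallH}) a second time, with small-diameter boundary paths $\tau^i$ joining $[r_i]$ to $[r_{i+1}]$, whose existence again uses uniform local path connectedness. Moreover, before any of this, a proper ray in the cusped space can have a tail in a horoball, or dive unboundedly deep into infinitely many horoballs; the paper handles this with Lemma \ref{HoroSC} (horoballs are simply connected at infinity) and Lemma \ref{HoroRay}, and it is precisely there that the hypothesis that each $P_i$ is $1$-ended is consumed---a hypothesis your proposal never uses in any essential way, which is a sign that the reduction has been assumed rather than proved. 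Your instinct that the cusps are where the technical difficulty resides is correct, but the work they force is concentrated in this reduction step, not merely in transporting filling constants from the thick part into the horoballs.
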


When $G$ is word hyperbolic it acts simplicially on a contractible locally finite and finite dimensional $\delta$-hyperbolic simplicial complex $K$ usually called the Rips complex. The group $G$ acts freely and transitively on the vertices of $K$, and the quotient space $G/K$ is compact so that $\partial G=\partial K$. Combining \cite{Bow99B} and \cite{Swarup} we see that when $G$ is 1-ended then $\partial G$ is locally connected. Applying \cite{BM91} we see $\partial K$ is a $Z$-set in $K$ and so; the word hyperbolic group $G$ has semistable fundamental group at $\infty$ if and only if $\partial K(=\partial G)$ has the shape of a locally connected continuum. Unfortunately, the analogous approach for relatively hyperbolic groups comes up a bit short for our purposes. 
The set $\partial (G,\mathcal P)$ need not be a $Z$-set for a cusped space, but when the pair $(G,\mathcal P)$ is $F_\infty$,  Corollary 3.17 of \cite{MW18} states that for $n>0$, there is an $(n+1)$-dimensional cusped space $X_{n+1}$ such that $\partial (G,\mathcal P)$ is a ``$Z_n$-set" in $X_{n+1}\cup \partial (G,\mathcal P)$. In the case $n=1$ and when $\partial (G,\mathcal P)$ has the shape of a locally connected continuum, the $Z_n$-set conclusion implies the corresponding cusped space $X_2$ has semi-stable fundamental group at $\infty$. Manning and Wang remark that the $F_\infty$ hypothesis of their Corollary 3.17 can easily be relaxed to an $F_{n+2}$ hypothesis (on $G$ and each member of $\mathcal P$) and one still obtains that $\partial (G,\mathcal P)$ is a $Z_n$ set for $X_{n+1}$.  In particular, if $G$ and each member of $\mathcal P$ is $F_3$, then $X_2$ has semistable fundamental group at $\infty$. It seems this approach will not produce a semistability result in the more general setting of finitely presented groups. Hence the need for Theorem \ref{GM-SS} is apparent. 

In Section \ref{CorP} we list three established results from the literature that along with Theorem \ref{Hom} and Duwoody's accessibility result directly imply Corollary \ref{Cor}. 
Section \ref{SS}, is devoted to basic definitions and background for the semistability of the first homology and fundamental group at $\infty$ of a finitely presented group $G$, including the connection to $H^2(G,\mathbb ZG)$.  Section \ref{GMSpace} covers basic definitions and background for relatively hyperbolic groups and cusped spaces. Section \ref{T2} contains our proof of Theorem \ref{GM-SS}. In section \ref{SimpApp} we prove two simplicial approximation results for $[0,\infty)\times [0,1]$. The first is a technical result with applications beyond this article. Section \ref{T1} concludes with a proof of the Main Theorem \ref{Hom}. 

\medskip

\noindent{\bf Acknowledgements:} We are grateful to J. Manning for helpful conversations concerning the relaxation of $F_n$ hypothesis in his work (discussed near the end of this section). 

\section{The Proof of Corollary \ref{Cor}} \label{CorP}

Several results mesh well with our main theorem and combine to imply Corollary \ref{Cor}.

\begin{theorem} (\cite {Bow01}, Theorem 1.5) \label{Bow1} 
Suppose $(G, {\mathcal P})$ is relatively hyperbolic, $G$ is 1-ended and each $P\in {\mathcal P}$ is finitely presented, does not contain an infinite torsion group, and is either 1 or 2-ended, then $\partial (G,{\mathcal P})$ is locally connected.
 \end{theorem}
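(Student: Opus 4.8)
Write $M=\partial(G,\mathcal P)$. Since $G$ is one-ended, $M$ is a nondegenerate metric continuum, and $G$ acts on it as a geometrically finite convergence group: every point of $M$ is either a conical limit point or a bounded parabolic point, the latter fixed by a conjugate of some $P_i$, whose stabilizer acts cocompactly on the complement. The plan is to argue by contradiction using the classical continuum-theoretic criterion (Whyburn): a compact metric continuum is locally connected if and only if it contains no \emph{continuum of convergence}, i.e.\ a nondegenerate subcontinuum $N$ together with a sequence of subcontinua $N_j$ with $N_j\cap N=\emptyset$ and $N_j\to N$ in the Hausdorff topology (so $\mathrm{diam}(N_j)$ stays bounded below while the $N_j$ accumulate on $N$). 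First I would assume such an $N$ and $\{N_j\}$ exist, pass to a subsequence so that chosen points $x_j\in N_j$ converge to some $\xi\in N$, and then derive a contradiction from the dynamics at $\xi$, splitting into the conical and the parabolic cases.

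The conical case I would treat by importing the word-hyperbolic technology essentially verbatim. When $\xi$ is a conical limit point, choose $g_j\in G$ realizing the conical dynamics at $\xi$, so that the $g_j$ converge locally uniformly to a constant on $M\setminus\{\eta\}$ for a repelling point $\eta$ while magnifying a fixed neighborhood of $\xi$. Applying $g_j$ to the configuration and using that the action is by homeomorphisms on the space of distinct triples (which is properly discontinuous), one sees that infinitely many disjoint continua of diameter bounded below would be forced into an arbitrarily small region, which is impossible in a compact metric space. This is precisely the mechanism by which continua of convergence are excluded at conical points, and it is the relative analogue of the Swarup--Bowditch no-cut-point/local-connectedness argument; so the conclusion here is that any continuum of convergence must accumulate at a bounded parabolic point.

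The remaining, and genuinely delicate, case is when $\xi$ is a bounded parabolic point fixed by $P:=gP_ig^{-1}$; here the stabilizer dynamics are parabolic rather than conical, the magnification argument is unavailable, and this is exactly where the hypotheses on $\mathcal P$ must enter. Because $\xi$ is bounded parabolic, $P$ acts cocompactly on $M\setminus\{\xi\}$, so the local structure of $M$ at $\xi$ is governed entirely by the coarse geometry of $P$. I would use finite presentation of $P$ together with the absence of an infinite torsion subgroup to guarantee that $P$ has an element of infinite order and well-behaved ends, ruling out the pathological parabolic points that an infinite torsion stabilizer could create. If $P$ is two-ended, the local picture at $\xi$ is controlled by its two ends (the horoball over $P$ behaves like a horoball over $\mathbb Z$), and one checks directly that small connected neighborhoods of $\xi$ exist, so no continuum of convergence can squeeze down to $N$ there. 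If $P$ is one-ended, I would pass to the combinatorial horoball model, where a neighborhood basis of $\xi$ corresponds to complements of larger and larger metric balls in the Cayley graph of $P$; one-endedness forces these complements to have connected shadows on $M$, again contradicting the existence of disjoint nondegenerate continua $N_j$ accumulating on $N$ at $\xi$.

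The hard part will be this parabolic case: converting the coarse-geometric input about $P$ (finite presentation, no infinite torsion, one- or two-endedness) into the purely topological statement that $M$ admits no continuum of convergence at $\xi$. The conical case is the known hyperbolic-type argument transplanted to the limit set, and the nondegeneracy and connectedness of $M$ are exactly what one-endedness of $G$ provides; but parabolic points are the only source of cut points, hence of potential failures of local connectedness, in the relative setting, so the entire weight of the peripheral hypotheses is carried in this final step.
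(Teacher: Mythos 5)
There is a genuine gap, and it is worth noting first that the paper you were asked to match does not prove this statement at all: it is quoted verbatim from Bowditch (\cite{Bow01}, Theorem 1.5), so the comparison must be with Bowditch's proof in \emph{Peripheral splittings of groups}. Your Whyburn-criterion-plus-dynamics plan is, at best, a sketch of the \emph{other} Bowditch result (from \emph{Connectedness properties of limit sets}): that a connected boundary with \emph{no global cut point} is locally connected. Even there, your conical-case mechanism is misstated: infinitely many pairwise disjoint continua of diameter bounded below are perfectly possible in a compact metric space (the sets $\{1/n\}\times[0,1]$ in the square, for instance--this is exactly what a continuum of convergence is), so "impossible in a compact metric space" cannot be the contradiction. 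The actual Bestvina--Mess/Bowditch argument must invoke proper discontinuity of the action on the space of distinct triples after expanding the $N_j$ to definite size, and it is genuinely more delicate than a compactness count.

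The deeper problem is the parabolic case, which you correctly identify as "the hard part" but for which you offer no argument that could be completed. Under the stated hypotheses the boundary $\partial(G,\mathcal P)$ \emph{can} have global cut points (whenever $G$ splits peripherally), all of them parabolic, and no purely local analysis of the coarse geometry of $P$ at such a point $\xi$ can rule out continua of convergence there: the failure of local connectedness is a global phenomenon tied to how the components of $M\setminus\{\xi\}$ fit together. Bowditch's actual route is structural: the cut points generate a dendrite/$\mathbb R$-tree on which $G$ acts, producing a \emph{peripheral splitting} of $G$; the hypotheses that each $P\in\mathcal P$ is finitely presented, contains no infinite torsion subgroup, and is one- or two-ended are used precisely to prove an \emph{accessibility} theorem ensuring that iterated peripheral splittings terminate; the terminal vertex groups have boundaries without cut points, which are locally connected by the dynamical argument; and local connectedness of $M$ is then reassembled from the pieces. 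In your proposal these hypotheses appear only decoratively ("well-behaved ends," "rule out pathological parabolic points"), the claims "one checks directly that small connected neighborhoods of $\xi$ exist" and "one-endedness forces these complements to have connected shadows" are unsupported, and the entire splitting/accessibility/induction half of the proof--which is where the theorem's named hypotheses do their work--is missing.
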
 

\begin{theorem} \label{split1} 
(see \cite{DS05}, Corollary 1.14; or \cite{Osin06})
Suppose the group $G$ is finitely generated and hyperbolic relative to the finitely generated groups $P_1,\ldots, P_n$. If $\mathcal P_i$ is a finite graph of groups decomposition of $P_i$ such that each edge group of $\mathcal P_i$ is finite, then $G$ is also hyperbolic relative to the subgroups $\{P_1,\ldots, P_{i-1}, P_i,\ldots, P_n\}\cup V(\mathcal P_i)$ where $V(\mathcal P_i)$ is the set of vertex groups of $\mathcal P_i$. 
\end{theorem}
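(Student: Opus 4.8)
The plan is to factor the statement into two standard moves: first, recast the graph-of-groups decomposition $\mathcal P_i$ as a relative hyperbolicity statement for the single group $P_i$, and second, compose this with the relative hyperbolicity of $G$ over $\mathcal P$ using the transitivity (``nesting'') property of relative hyperbolicity. For the first move, I would use the Bass-Serre tree $T_i$ associated to $\mathcal P_i$. Since $\mathcal P_i$ is a finite graph of groups, $P_i$ acts cocompactly on $T_i$, and since every edge group is finite, every edge stabilizer is finite, while the vertex stabilizers are exactly the conjugates of the members of $V(\mathcal P_i)$. The claim is that a finitely generated group acting cocompactly on a tree with finite edge stabilizers is hyperbolic relative to (conjugacy representatives of) its vertex stabilizers. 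Concretely, form the Cayley graph of $P_i$ with respect to a finite generating set and cone off each coset of each vertex group; because the edge groups are finite, collapsing the cosets of vertex groups recovers a space quasi-isometric to $T_i$, which is $0$-hyperbolic, and finiteness of the edge groups makes the coned-off graph fine. The bounded coset penetration property then follows from the tree structure: two quasigeodesics with matching endpoints that enter and leave the same coset must transit through the finitely many elements of an edge group, giving uniform control. This is the technical heart of the first step, and it is precisely where finiteness of the edge groups is used (finite vertex groups, if any, may simply be discarded from the peripheral family).

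For the second move, I would invoke transitivity of relative hyperbolicity: if $G$ is hyperbolic relative to $\{P_1,\ldots,P_n\}$ and each $P_j$ is hyperbolic relative to a collection $\mathcal Q_j$ (taking $\mathcal Q_j=\{P_j\}$ for $j\ne i$ and $\mathcal Q_i=V(\mathcal P_i)$), then $G$ is hyperbolic relative to $\bigcup_j \mathcal Q_j$. Combining the two moves yields that $G$ is hyperbolic relative to $\{P_1,\ldots,P_{i-1},P_{i+1},\ldots,P_n\}\cup V(\mathcal P_i)$, which is the asserted conclusion, with $P_i$ deleted in favor of its vertex groups.

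The main obstacle is the transitivity step: establishing it rigorously requires a metric characterization of relative hyperbolicity that behaves well under composition --- for instance Osin's linear relative isoperimetric inequality, or the Drutu-Sapir asymptotic-cone / tree-graded characterization --- and checking that a relative presentation of $G$ over $\mathcal P$, together with relative presentations of the $P_j$ over $\mathcal Q_j$, assembles into a relative presentation of $G$ over the refined peripheral structure with a linear relative Dehn function. One must also verify the almost-malnormality and finite-intersection conditions for the enlarged peripheral family, which is again where finiteness of the edge groups re-enters. Since this composition is exactly the content of \cite{DS05}, Corollary 1.14 and of \cite{Osin06}, in practice I would reduce to citing those results once the first move is in place.
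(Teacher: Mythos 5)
Your proposal is correct and matches the paper's treatment: the paper offers no proof of its own, citing Dru\c{t}u--Sapir (\cite{DS05}, Corollary 1.14) and Osin \cite{Osin06} for exactly the transitivity statement you invoke as your second move. Your first move --- that a finitely generated group splitting as a finite graph of groups with finite edge groups is hyperbolic relative to its (finitely generated) vertex groups --- is the standard bridging fact that the paper's citation presupposes, so the two arguments coincide.
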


If a peripheral  subgroup $P_i$ is either finite or 2-ended, it may be removed from the collection of peripheral  subgroups and $G$ remains hyperbolic relative to the remaining subgroups. Recall that a finitely generated group is {\it accessible} if it has a finite graph of groups decomposition with each edge group finite and each vertex group either finite or 1-ended. By M. Dunwoody's accessibility theorem \cite{Dun85}, all (almost) finitely presented groups are accessible.  We show there is a finite collection $\mathcal P'$ of 1-ended subgroups of $G$ (each distinct from $G$) such that $\partial (G,\mathcal P')$  is locally connected and $H^2(P,\mathbb ZP)$ is free abelian for each $P\in \mathcal P'$. Then apply Theorem \ref{Hom}. 

For each $i$ let $\mathcal G_i$ be a Dunwoody decomposition of $P_i$. Let $\mathcal P_1$ be the set of vertex groups of the $\mathcal G_i$ for all $i\in \{1,\ldots, n\}$. By Theorem \ref{split1}, $G$ is hyperbolic relative to $\mathcal P_1$. By the following result, if $P\in \mathcal P_1$ is 1-ended, then $H^2(P;\mathbb ZP)$ is free abelian. 

\begin{theorem}\label{Dunw}  (\cite{M87}, Theorem 4) 
Suppose $G$ is a finitely presented group and $\mathcal G$ is a finite graph of groups decomposition of $G$ such that each vertex group is either finite or 1-ended and each edge group is finite. Then $H^2(G,\mathbb ZG)$ is free abelian if and only if  for each 1-ended vertex group $V$ of $\mathcal G$, $H^2(V,\mathbb ZV)$ is free abelian.
\end{theorem}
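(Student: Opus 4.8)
The plan is to compute $H^2(G,\mathbb{Z}G)$ directly from the graph-of-groups data, using the Mayer--Vietoris sequence attached to the action of $G$ on the Bass--Serre tree $T$ of $\mathcal G$ and exploiting that every edge group is finite. The augmented simplicial chain complex of $T$ is a short exact sequence of $\mathbb{Z}G$-modules
\[
0\to \bigoplus_{e}\mathbb{Z}[G/G_e]\to \bigoplus_{v}\mathbb{Z}[G/G_v]\to \mathbb{Z}\to 0,
\]
where the sums run over the (finitely many) edge and vertex orbits, i.e. the edges and vertices of the finite quotient graph $\mathcal G$. Applying $\mathrm{Ext}^{\ast}_{\mathbb{Z}G}(-,\mathbb{Z}G)$ and Shapiro's lemma (each $\mathbb{Z}[G/G_v]$ is induced from $G_v$) produces the long exact sequence
\[
\cdots\to H^n(G,\mathbb{Z}G)\to \bigoplus_{v}H^n(G_v,\mathbb{Z}G)\to \bigoplus_{e}H^n(G_e,\mathbb{Z}G)\to H^{n+1}(G,\mathbb{Z}G)\to\cdots ,
\]
and I would read off $H^2(G,\mathbb{Z}G)$ from the degree-$2$ portion.

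Next I would kill the edge terms. For a finite group $G_e$ the module $\mathbb{Z}G$ is free over $\mathbb{Z}G_e$, while $H^k(G_e,\mathbb{Z}G_e)=0$ for $k\ge 1$; hence $H^k(G_e,\mathbb{Z}G)=0$ for $k\ge 1$. Feeding this in collapses the sequence to an isomorphism $H^2(G,\mathbb{Z}G)\cong\bigoplus_v H^2(G_v,\mathbb{Z}G)$. It then remains to identify each vertex term. As a left $\mathbb{Z}G_v$-module $\mathbb{Z}G\cong\bigoplus_{G_v\backslash G}\mathbb{Z}G_v$ is free, so—granting that $H^2(G_v,-)$ commutes with this direct sum—we get $H^2(G_v,\mathbb{Z}G)\cong\bigoplus_{G_v\backslash G}H^2(G_v,\mathbb{Z}G_v)$. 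A finite vertex group contributes $0$, so only the $1$-ended vertex groups $V$ survive, and $H^2(G,\mathbb{Z}G)$ is exhibited as a direct sum of copies of the groups $H^2(V,\mathbb{Z}V)$. Since each $H^2(V,\mathbb{Z}V)$ is torsion free by \cite{G} 13.7.1, and a direct sum of torsion-free abelian groups is free abelian if and only if every summand is, both implications of the stated equivalence follow.

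The step I expect to need the most care is the commutation $H^2(G_v,\bigoplus_I\mathbb{Z}G_v)\cong\bigoplus_I H^2(G_v,\mathbb{Z}G_v)$, and this is exactly where finite presentation enters. Cohomology does not commute with infinite direct sums in general, but $H^2(H,-)$ does when $H$ is of type $FP_2$: from a finite presentation one gets a partial free resolution $P_2\xrightarrow{\partial_2}P_1\to P_0\to\mathbb{Z}$ with all $P_i$ finitely generated, and the group of $2$-cocycles is $\{\phi\in\mathrm{Hom}(P_2,M):\phi\circ\partial_3=0\}=\mathrm{Hom}(\mathrm{im}\,\partial_2,M)$ because $\mathrm{im}\,\partial_3=\ker\partial_2$. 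As $\mathrm{im}\,\partial_2$ is a quotient of the finitely generated module $P_2$, every functor appearing in the cocycle/coboundary description commutes with direct sums, and the claim follows; I would record this as a short lemma. The remaining bookkeeping (naturality of the decompositions) is routine. I would also note the parallel geometric route afforded by the paper's framework: by Corollary \ref{Free} the conclusion is equivalent to semistability of first homology at $\infty$, which can be read off a tree-of-spaces model for $\mathcal G$ whose edge spaces are compact precisely because the edge groups are finite; there the relevant $\mathrm{pro}\text{-}H_1$ at infinity is assembled from the $1$-ended vertex spaces, and since homology always commutes with direct sums this alternative sidesteps the cohomological finiteness subtlety entirely.
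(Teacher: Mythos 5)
The paper itself contains no proof of Theorem \ref{Dunw}: it is quoted from \cite{M87}, a paper in Mihalik's semistability-at-infinity program, so the original argument is in the spirit of the ``parallel geometric route'' you sketch at the end. Your Mayer--Vietoris argument is therefore a genuinely different, purely algebraic derivation, and its outline is correct: the short exact sequence of $\mathbb{Z}G$-modules coming from the Bass--Serre tree, Shapiro's lemma for the induced modules $\mathbb{Z}[G/G_v]$, the vanishing $H^k(G_e,\mathbb{Z}G)=0$ for $k\geq 1$ when $G_e$ is finite (the restriction of $\mathbb{Z}G$ to a finite subgroup is coinduced from the trivial group), the resulting isomorphism of $H^2(G,\mathbb{Z}G)$ with $\bigoplus_v H^2(G_v,\mathbb{Z}G)$, and the closing observation that a direct sum of abelian groups is free abelian if and only if each summand is (subgroups of free abelian groups are free abelian). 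Your reduction of the commutation of $H^2(H,-)$ with direct sums to the finite generation of $P_1$ and of $\operatorname{im}\partial_2$ is also correct as stated, for any group $H$ of type $FP_2$.

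The genuine gap is where that lemma gets applied. You need each \emph{vertex group} $G_v$ to be of type $FP_2$, but the theorem only assumes that $G$ is finitely presented, and ``finite or 1-ended'' gives at most finite generation of $G_v$, not $FP_2$. So, as written, the step $H^2(G_v,\bigoplus_I\mathbb{Z}G_v)\cong\bigoplus_I H^2(G_v,\mathbb{Z}G_v)$ --- which you yourself identify as the crux --- is unjustified precisely for the 1-ended vertex groups, the only ones that matter. What rescues the argument is the standard fact that if a finitely presented group decomposes as a finite graph of groups with finite edge groups, then all vertex groups are finitely presented (classical for free products via Grushko; for finite edge groups it follows from Dunwoody's track/accessibility machinery, cf. \cite{Dun85}). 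You must invoke or prove this fact; note that it is also implicitly needed in the paper's proof of Corollary \ref{Cor}, where Theorem \ref{Hom} is applied to the vertex groups of a Dunwoody decomposition. Finally, your last sentence is too optimistic: the geometric route does not ``sidestep the cohomological finiteness subtlety,'' because translating the conclusion for a vertex group $V$ via Corollary \ref{Free} (that is, via \cite{GM85}) already requires $V$ to be finitely presented, so exactly the same fact about vertex groups is needed there as well.
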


Let $\mathcal P'$ be obtained from $\mathcal P_1$ by removing all finite groups. Again, $G$ is hyperbolic relative to $\mathcal P'$.     By Theorem \ref{Bow1}, $\partial (G,\mathcal P'$) is locally connected, and by Theorem \ref{Hom}, $H^2(G,\mathbb ZG)$ is free abelian.

\section{Semistability at $\infty$} \label{SS}

The best reference for the notion of semistable fundamental group (homology) at $\infty$ is \cite{G} and we use this book as a general reference throughout this section. While semistability makes sense for multiple ended spaces, we are only interested in 1-ended spaces in this article. Suppose $K$ is a 
1-ended locally finite and  connected CW complex. A {\it ray} in $K$ is a continuous map $r:[0,\infty)\to K$. A continuous map $f:X\to Y$ is {\it proper} if for each compact set $C$ in $Y$, $f^{-1}(C)$ is compact in $X$. 
The space $K$ has {\it semistable fundamental group at $\infty$} if any two proper rays in $K$ 
are properly homotopic. We say $K$ has {\it semistable first homology at $\infty$} if any two proper rays $r$ and $s$ in $K$ are properly homologous. 

\begin{remark}\label{REdge} 
In a CW-complex, any proper ray is properly homotopic to a proper edge path ray, so for semistability one only need show proper edge path rays are properly homotopic/homologous. 
\end{remark}

We are only interested in simply connected complexes. In this case we need only consider edge path rays with the same initial vertex $\ast$, and homotopies relative to $\ast$.  By properly homologous, we mean there is a proper map $m:M\to K$ where $M$ is a connected 2-manifold with boundary homeomorphic to $\mathbb R^1=(-\infty,\infty)$, and such that for $t\in [0,\infty)$, $m(t)=r(t)$ and for $t\in (-\infty,0]$,  $m(t)=s(-t)$. 
Suppose  $C_0, C_1,\ldots $ is a collection of compact subsets of a locally finite complex $K$ such that $C_i$ is a subset of the interior of $C_{i+1}$ and $\cup_{i=0}^\infty C_i=K$. If $r:[0,\infty)\to K$ is proper, then $\pi_1 (\varepsilon K,r)$ is (up to pro-isomorphism) the  inverse system of groups:
$$\pi_1(K-C_0,r)\leftarrow \pi_1(K-C_1,r)\leftarrow \cdots$$

When $r$ and $s$ are properly homotopic, $\pi_1(\varepsilon K,r)$ is pro-isomorphic to $\pi_1(\varepsilon K,s)$. When $K$ is 1-ended and has semistable fundamental group at $\infty$, $\pi_1(\varepsilon K,r)$ is independent of $r$ (up to pro-isomorphism)  and is called {\it the fundamental group of the end of $K$}. The inverse limit of $\pi_1(\varepsilon K,r)$ is denoted $\pi_1^\infty(K,r)$ and is called {\it the fundamental group at $\infty$ of $K$}.
The group $H_1 (\varepsilon K)$ is (up to pro-isomorphism) the inverse system:
$$H_1(K-C_0)\leftarrow H_1(K-C_1)\leftarrow \cdots$$

These inverse systems are pro-isomorphic to  inverse systems of groups with epimorphic bonding maps if and only if $K$ has semistable fundamental group at $\infty$ (respectively, semistable first homology at $\infty$).  An inverse system of groups that is pro-isomorphic to one with epimorphic bonding maps is called {\it semistable} or {\it Mettag-Leffler}. 

\begin{remark}\label{R1} 
If $K$ has semistable fundamental group at $\infty$, then abelianizing, it immediately follows that $K$ has semistable first homology at $\infty$. 
\end{remark}

Semistablity is an invariant of proper homotopy type and quasi-isometry type. 
There are a number of equivalent forms of semistability. The equivalence of the conditions in the next theorem is discussed in \cite{CM2}. 

\begin{theorem}\label{ssequiv} (see Theorem 3.2, \cite{CM2}) \label{EquivSS} 
Suppose $K$ is a connected 1-ended  locally finite CW-complex. Then the following are equivalent:
\begin{enumerate}
\item $K$ has semistable fundamental group at $\infty$.
\item Suppose $r:[0,\infty )\to K$ is a proper base ray. Then for any compact set $C$, there is a compact set $D$ such that for any third compact set $E$ and loop $\alpha$ based on $r$ and with image in $K-D$, $\alpha$ is homotopic $rel\{r\}$ to a loop in $K-E$, by a homotopy with image in $K-C$. 
\item For any compact set $C$ there is a compact set $D$ such that if $r$ and $s$ are proper rays based at $v$ and with image in $K-D$, then $r$ and $s$ are properly homotopic $rel\{v\}$, by a proper homotopy in $K-C$. 
\end{enumerate}
\end{theorem}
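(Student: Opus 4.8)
The plan is to establish the three equivalences by a cycle of implications $(1)\Rightarrow(2)\Rightarrow(3)\Rightarrow(1)$, using the Mittag-Leffler (semistability) reformulation of the fundamental pro-group as the conceptual hub. Throughout I fix a cofinal filtration $C_0\subset C_1\subset\cdots$ of $K$ by compact sets whose complements $K-C_i$ are connected (available since $K$ is $1$-ended and locally finite) and a proper base ray $r$. By the discussion preceding the theorem, $K$ has semistable fundamental group at $\infty$ exactly when the inverse system $\{\pi_1(K-C_i,r)\}$ is pro-isomorphic to one with epimorphic bonding maps, i.e.\ satisfies the Mittag-Leffler condition: for each $i$ there is $j\ge i$ with $\operatorname{im}[\pi_1(K-C_k,r)\to\pi_1(K-C_i,r)]=\operatorname{im}[\pi_1(K-C_j,r)\to\pi_1(K-C_i,r)]$ for all $k\ge j$.

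For $(1)\Rightarrow(2)$ I would argue purely algebraically. Given a compact $C$, pick $i$ with $C\subseteq C_i$, let $j$ be the Mittag-Leffler index for $i$, and set $D=C_j$. A loop $\alpha$ based on $r$ with image in $K-D$ represents a class $[\alpha]\in\operatorname{im}[\pi_1(K-C_j,r)\to\pi_1(K-C_i,r)]$, and by the stabilization of images this class lies in $\operatorname{im}[\pi_1(K-C_k,r)\to\pi_1(K-C_i,r)]$ for every $k\ge j$. Hence for any prescribed compact $E$ (enlarged to some $C_k$), the loop $\alpha$ is homotopic, rel $r$ and by a homotopy lying in $K-C_i\subseteq K-C$, to a loop in $K-C_k\subseteq K-E$, which is exactly (2). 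That $[\alpha]$ is automatically representable arbitrarily far out is the entire content of image-stabilization, so this step is essentially formal.

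For $(2)\Rightarrow(3)$ I would first show that (2) forces every proper ray lying far enough out to be properly homotopic, within $K-C$, to the base ray $r$; composing two such homotopies then yields (3). Given proper rays $r',s'$ based at a common vertex $v$ with image in a suitably large $K-D$, I would choose rungs $\tau_n$ joining $r(a_n)$ to $r'(b_n)$ with each $\tau_n$ outside an ever-larger compact set (possible by properness and connectedness of the complements). The successive quadrilaterals built from subarcs of $r$, of $r'$, and of consecutive rungs are loops that, closed up along $r$, are based on $r$ and lie far out; condition (2) fills each by a homotopy confined to $K-C$, and telescoping these strip-homotopies produces a proper homotopy rel $v$ from $r'$ to $r$ inside $K-C$, and likewise for $s'$. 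Concatenating gives the proper homotopy rel $v$ between $r'$ and $s'$ required by (3). Finally, for $(3)\Rightarrow(1)$ I would take arbitrary proper rays $\rho,\sigma$, adjust them by a compactly supported homotopy so they share an initial vertex and, by Remark~\ref{REdge}, are edge-path rays; applying (3) to the exhaustion $\{C_n\}$ furnishes controlled homotopies of their tails lying in complements of growing compacta, which glue to a single proper homotopy between $\rho$ and $\sigma$.

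The main obstacle is the infinite telescoping that appears in $(2)\Rightarrow(3)$ (and again in $(3)\Rightarrow(1)$): one must assemble infinitely many controlled strip-homotopies into a single globally defined map on $[0,\infty)\times[0,1]$ that is genuinely proper, while keeping the basepoint constraints and the seams between consecutive strips continuous. The two devices that make this work are the translation between loops \emph{based on the ray $r$} and rays \emph{based at a fixed vertex $v$} (carried out by splicing in tails of $r$ and rungs chosen to stay outside the prescribed compact set), and the requirement that the $n$-th strip be pushed into the complement of $C_n$, so that local finiteness of $K$ together with properness of $\rho$ and $\sigma$ forces preimages of compacta under the glued homotopy to be compact. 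By contrast, the passage $(1)\Rightarrow(2)$ is where the Mittag-Leffler reformulation does the real conceptual work and requires no geometry beyond tracking images in the pro-group.
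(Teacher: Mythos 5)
First, a point of comparison: the paper does not prove this theorem at all --- it is quoted from \cite{CM2} --- so your argument can only be judged on its own terms. Your overall architecture (a cycle $(1)\Rightarrow(2)\Rightarrow(3)\Rightarrow(1)$, with the Mittag--Leffler reformulation as hub) is reasonable, and $(1)\Rightarrow(2)$ is essentially correct granted the assertion, made in the paper just before the theorem, that semistability of $K$ is equivalent to the Mittag--Leffler property of the tower $\{\pi_1(K-C_i,r)\}$; the image-stabilization argument then goes through modulo routine bookkeeping about sliding basepoints along $r$.

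The genuine gap is in $(2)\Rightarrow(3)$, at the sentence ``condition (2) fills each by a homotopy confined to $K-C$.'' Condition (2) does not fill (null-homotope) loops; it only pushes them further out, and under semistability alone the ladder quadrilaterals need not bound anywhere. Concretely, let $K=S^1\times[0,\infty)$, which is semistable (the fundamental pro-group is the constant tower $\mathbb Z$), let $r'$ be the vertical ray $\{1\}\times[0,\infty)$ and $s'$ a proper ray from the same basepoint that winds once around $S^1$ between heights $n$ and $n+1$. For naive far-out rungs the quadrilateral loops have nonzero winding number, hence are nontrivial in $\pi_1(K)=\mathbb Z$ and bound no disk in $K$ whatsoever --- so there are no ``strip-homotopies'' to telescope, and your construction fails at the first square even though $r'$ and $s'$ \emph{are} properly homotopic. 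The actual proof must use the pushing homotopies themselves as the material of the homotopy: one pushes the first (accumulated) loop beyond $D_2$ by a homotopy in $K-C_1$, concatenates the pushed-out copy with the next quadrilateral, pushes the result beyond $D_3$ inside $K-C_2$, and so on, in a staircase pattern in which pushed copies cancel against adjacent stages; equivalently, the tracks of the pushing homotopies correct the rungs (in the example above, the corrected rung at height $n$ must wind $n$ times). This construction is the real content of the implication and is absent from your sketch. Two secondary problems: routing both $r'$ and $s'$ through the auxiliary base ray $r$ cannot yield a homotopy rel $\{v\}$ in $K-C$ (the ray $r$ need not pass through $v$, may meet $C$, and your two homotopies have different target rays, so they do not compose); instead apply (2) with base ray $r'$ and ladder $r'$ to $s'$ directly. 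Finally, $(3)\Rightarrow(1)$ needs no exhaustion and no infinite gluing (which would re-import exactly the same gap): a single application of (3) with $C=\emptyset$ produces a proper homotopy from a tail of $\rho$ to a rung followed by a tail of $\sigma$, after which one uses the standard fact that two rays sharing a tail are properly homotopic.
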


Removing the base rays and replacing $\pi_1$ by $H_1$ in Theorem \ref{ssequiv} gives the corresponding homology result. 
As a consequence of these homology results we have the following result which we will use in \S\ref{T1}:

\begin{theorem} \label{H1SS} 
Suppose $K$ has semistable first homology at $\infty$. If $C$ is a compact subset of $K$ then there is a compact set $D$ of $K$ such that if $\alpha$ is a proper map of $\mathbb R^1$ or the circle $S^1$ into $K-D$ then there is a manifold $M$ with boundary a line/circle, and a proper map $H:M\to K-C$ such that $H$ restricted to $\partial M$ agrees with $\alpha$. 
\end{theorem}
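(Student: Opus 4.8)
The plan is to derive both cases from the homology analogue of condition (3) of Theorem \ref{ssequiv} (the statement obtained by removing base rays and replacing $\pi_1$ by $H_1$), handling the line case directly and reducing the circle case to it by a doubling trick. First I fix $D$ in terms of $C$. Enlarging $C$ to the union $C^{+}$ of $C$ with all bounded components of $K-C$ (finitely many, since $K$ is locally finite) makes $K-C^{+}$ connected. Applying the $H_1$ analogue of Theorem \ref{ssequiv}(3) to the compact set $C$ produces a compact $D_0$, and I take $D$ to be $D_0\cup C^{+}$ together with the bounded components of its complement, so that $K-D$ is connected, unbounded, and contained in both $K-D_0$ and $K-C$. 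This is precisely what guarantees that a loop living in $K-D$ can be joined to infinity without re-entering $C$.

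For the line case, given a proper map $\alpha:\mathbb R^1\to K-D$, set $v=\alpha(0)$ and define proper rays $r=\alpha|_{[0,\infty)}$ and $s(t)=\alpha(-t)$; both are based at $v$ with image in $K-D\subseteq K-D_0$. By the definition of ``properly homologous'' given above and the $H_1$ analogue of Theorem \ref{ssequiv}(3), $r$ and $s$ are properly homologous by a proper map $H:M\to K-C$, where $M$ is a connected $2$-manifold with $\partial M\cong\mathbb R^1$ and $H|_{\partial M}=\alpha$. This is exactly the required conclusion.

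For the circle case, given (any) map $\alpha:S^1\to K-D$ with $v=\alpha(p)$, I first choose a proper ray $\rho:[0,\infty)\to K-D$ with $\rho(0)=v$; such a ray exists because $K-D$ is connected, unbounded, and locally finite. Splicing $\rho$ run inward, then the loop $\alpha$, then $\rho$ run outward yields a proper line $\beta:\mathbb R^1\to K-D$. Applying the line case to $\beta$ gives a proper $H:M\to K-C$ with $\partial M\cong\mathbb R^1$ and $H|_{\partial M}=\beta$. Finally I glue the two copies of $\rho$ in $\partial M$ to one another by the reflection identifying the ``distance-$t$'' points of the incoming and outgoing rays; since $H$ agrees on identified points, it descends to a proper map $\bar H:\bar M\to K-C$, where $\bar M$ is the resulting non-compact $2$-manifold. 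A check of the induced boundary orientations shows the gluing reverses them, so $\bar M\cong S^1\times[0,\infty)$, whose single boundary circle carries exactly the loop $\alpha$; thus $\bar H|_{\partial\bar M}=\alpha$, as needed.

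I expect the main obstacle to be the bookkeeping around components and properness rather than anything deep. One must choose $D$ so that every loop in $K-D$ can be connected to infinity inside $K-C$ (this forces the passage to $C^{+}$ and the insistence that $K-D$ be connected and unbounded, which is also what makes $\rho$ available in the correct region); one must verify that the boundary gluing yields an honest manifold with a single boundary circle; and one must confirm that a proper map descends to a proper map under a gluing along a closed set. None of these is hard, but each must be executed with care, and it is the initial choice of $D$ that ties them together.
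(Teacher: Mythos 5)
Your proposal is correct, but the comparison here is slightly unusual: the paper gives no written proof of Theorem \ref{H1SS} at all, asserting it only as ``a consequence'' of the homology analogues of Theorem \ref{ssequiv}, so your job was to supply what the paper leaves implicit. Your line case is the direct translation the paper surely intends: apply the homology analogue of condition (3) of Theorem \ref{ssequiv} to the rays $r=\alpha|_{[0,\infty)}$ and $s(t)=\alpha(-t)$, and the paper's definition of ``properly homologous'' hands you a connected $2$-manifold with boundary a line carrying exactly $\alpha$, mapped properly into $K-C$. Your circle case is a genuinely different route from the standard one: the conventional argument iterates the homology analogue of condition (2) over an exhaustion $C=C_0\subset C_1\subset\cdots$ of $K$ and stacks the resulting compact homologies end-to-end to build a proper map of a surface with a single boundary circle; you instead splice a proper ray $\rho$ into the loop, invoke the line case once, and glue the two boundary copies of $\rho$ to each other. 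Your reduction uses the semistability hypothesis only once rather than infinitely often, at the cost of the point-set checks you enumerate (existence of $\rho$, which your careful choice of $D$ --- adding bounded complementary components so that $K-D$ is connected and unbounded --- correctly secures; Hausdorffness and manifold structure of the quotient, including at the glued endpoint; descent of properness), and all of these do go through. One sentence is wrong, though harmlessly so: the claim that an orientation check forces $\bar M\cong S^1\times[0,\infty)$. The surface $M$ supplied by the semistability hypothesis is an arbitrary connected $2$-manifold with boundary a line --- it may have infinite genus or several ends --- so $\bar M$ need not be a half-open annulus; fortunately the theorem only requires that $\partial\bar M$ be a circle carrying $\alpha$, and your gluing yields that regardless of orientations, since the only unidentified boundary points are the open arc carrying $\alpha$ together with the single identified endpoint pair.
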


If $G$ is a finitely presented group and $X$ is  a finite connected complex with $\pi_1(X)=G$ then $G$ has {\it semistable fundament group at} $\infty$ (respectively, {\it semistable first homology at $\infty$}) if the universal cover of $X$ has semistable fundamental group (respectively, first homology) at $\infty$. This definition only depends on $G$ and it is unknown if all finitely presented groups have semistable fundamental group at $\infty$.  By Remark \ref{R1}, if $G$ has semstable fundamental group at $\infty$, then $G$ has semistable first homology at $\infty$. 

The following is a partial statement of a result of M. Mihalik and R. Geoghegan. Here $\tilde X$ is the universal cover of $X$. 
\begin{theorem}\label{GeoM} (\cite{GM85},  Theorem 1.1) 
Let $G$ be a group of type $F(n)$ and let $X$ be a $K(G,1)$ CW-complex having finite $n$-skeleton.

(i) For $k\leq n$, $H^k(G,\mathbb ZG)$ mod torsion is free abelian if and only if $H_{k-1}(\varepsilon \tilde X^n)$ is semistable. 

(ii) For $k\leq n$, $H^k(G,\mathbb ZG)$ is torsion free if and only if  $H_{k-2}(\varepsilon \tilde X^n)$ is pro-torsion free. 
\end{theorem}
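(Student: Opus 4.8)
The plan is to translate the purely algebraic statement about $H^k(G,\mathbb ZG)$ into a statement about the homology of the end of $\tilde X^n$, and then to read off both conclusions from a single short exact sequence. First I would reduce to compactly supported cohomology. Since $G$ has type $F(n)$ and $X$ is a $K(G,1)$ with finite $n$-skeleton, the cellular chain complex $C_\ast(\tilde X^n)$ consists of free $\mathbb ZG$-modules that are finitely generated in degrees $0,\dots,n$. Applying $\operatorname{Hom}_{\mathbb ZG}(-,\mathbb ZG)$ and using that $\operatorname{Hom}_{\mathbb ZG}(\bigoplus_{\text{cells}}\mathbb ZG,\mathbb ZG)$ is the group of finitely supported cellular cochains, one gets a natural identification $H^k(G,\mathbb ZG)\cong H^k_c(\tilde X^n)$ for $k\le n$. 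The only delicate point is the top dimension: because $C_{n+1}(\tilde X)$ need not be finitely generated, one must truncate to $\tilde X^n$, which is exactly why the statement is phrased in terms of $\varepsilon\tilde X^n$ rather than $\varepsilon\tilde X$. I would record that $\tilde X^n$ is $(n-1)$-connected (as the $n$-skeleton of the contractible $\tilde X$), since acyclicity in the relevant range drives the next step.

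Next I would produce the governing exact sequence. Fixing a filtration $C_0\subset C_1\subset\cdots$ of $\tilde X^n$ by finite subcomplexes, compact supports give $H^k_c(\tilde X^n)=\varinjlim_i H^k(\tilde X^n,\tilde X^n-C_i)$. The universal coefficient theorem yields, for each $i$,
\[
0\to \operatorname{Ext}\big(H_{k-1}(\tilde X^n,\tilde X^n-C_i),\mathbb Z\big)\to H^k(\tilde X^n,\tilde X^n-C_i)\to \operatorname{Hom}\big(H_k(\tilde X^n,\tilde X^n-C_i),\mathbb Z\big)\to 0 .
\]
Because $\tilde X^n$ is acyclic in the relevant range, the long exact sequence of the pair gives $H_j(\tilde X^n,\tilde X^n-C_i)\cong \tilde H_{j-1}(\tilde X^n-C_i)$, so the relative terms become the groups defining $H_\ast(\varepsilon\tilde X^n)$. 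Passing to the filtered colimit over $i$ (which is exact) then produces
\[
0\to \varinjlim_i \operatorname{Ext}\big(\tilde H_{k-2}(\tilde X^n-C_i),\mathbb Z\big)\to H^k(G,\mathbb ZG)\to \varinjlim_i \operatorname{Hom}\big(\tilde H_{k-1}(\tilde X^n-C_i),\mathbb Z\big)\to 0 .
\]
The crucial structural observation is that for finitely generated abelian groups $\operatorname{Ext}(A,\mathbb Z)\cong A_{\mathrm{tors}}$, so the sub is a colimit of torsion groups (hence torsion), while $\operatorname{Hom}(A,\mathbb Z)$ is free and the quotient is torsion-free. Thus the sequence splits $H^k(G,\mathbb ZG)$ cleanly into its torsion part and its torsion-free quotient.

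From here both halves follow. For (ii), the torsion subgroup of $H^k(G,\mathbb ZG)$ is exactly $\varinjlim_i\operatorname{Ext}(\tilde H_{k-2}(\tilde X^n-C_i),\mathbb Z)$, which is the colimit of the dual torsion data of the inverse system $\{\tilde H_{k-2}(\tilde X^n-C_i)\}$; I would show this colimit vanishes precisely when that system is pro-torsion free, giving the equivalence in degree $k-2$. For (i), the identification above shows $H^k(G,\mathbb ZG)$ mod torsion is canonically $\varinjlim_i\operatorname{Hom}(\tilde H_{k-1}(\tilde X^n-C_i),\mathbb Z)$, and the content becomes the algebraic lemma: for an inverse sequence $\{A_i\}$ of finitely generated abelian groups, $\varinjlim_i\operatorname{Hom}(A_i,\mathbb Z)$ is free abelian if and only if $\{A_i\}$ is semistable (Mittag-Leffler). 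I would also note that the whole discussion is independent of the chosen exhaustion, so the pro-isomorphism type of $H_\ast(\varepsilon\tilde X^n)$ is what appears.

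The main obstacle is precisely this last lemma, and especially its ``only if'' direction. In the Mittag-Leffler case one may replace $\{A_i\}$ by a pro-isomorphic system with surjective bonds; each surjection of finitely generated free groups splits, so the dual maps $\operatorname{Hom}(A_i,\mathbb Z)\hookrightarrow\operatorname{Hom}(A_{i+1},\mathbb Z)$ are inclusions of direct summands, and a countable increasing union of free abelian groups in which each term is a summand of the next is free abelian. The harder direction is showing that failure of semistability forces non-freeness: here one uses the dichotomy that ${\varprojlim}^1$ of a tower of finitely generated abelian groups is either trivial (the semistable case) or uncountable, together with an explicit analysis showing that a non-Mittag-Leffler tower produces a divisible, non-finitely-generated subgroup of the colimit (the model case being $\{\mathbb Z\xleftarrow{p}\mathbb Z\xleftarrow{p}\cdots\}$, whose $\operatorname{Hom}$-colimit is $\mathbb Z[1/p]$, torsion-free but not free). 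Getting the degree bookkeeping consistent with the truncation to $\tilde X^n$, and matching the $k-1$ shift in (i) against the $k-2$ shift in (ii), is the remaining point requiring care but no new ideas.
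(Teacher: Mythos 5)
This theorem is not proved in the paper at all --- it is quoted verbatim from \cite{GM85} and then used as a black box --- so your proposal can only be measured against the known argument, whose overall skeleton (compact supports, universal coefficients, passage to the colimit, and a duality lemma for towers of abelian groups) your outline does resemble. But there is a genuine gap, and it occurs exactly in the case this paper needs. Your opening identification $H^k(G,\mathbb ZG)\cong H^k_c(\tilde X^n)$ is false at $k=n$: since $\tilde X^n$ has no $(n+1)$-cells, \emph{every} compactly supported $n$-cochain is a cocycle for $H^n_c(\tilde X^n)$, whereas a class in $H^n(G,\mathbb ZG)$ must in addition be killed by the coboundary into $\operatorname{Hom}_{\mathbb ZG}(C_{n+1}(\tilde X),\mathbb ZG)$; one gets only an inclusion $H^n(G,\mathbb ZG)\hookrightarrow H^n_c(\tilde X^n)$, in general proper. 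Concretely, for $G=\mathbb Z^3$, $X=T^3$, $n=2$: $H^2(G,\mathbb ZG)=0$, but in the $2$-skeleton $\tilde X^2$ of cubulated $\mathbb R^3$ the cochain dual to a single $2$-cell pairs nontrivially with the $2$-cycle $\partial Q$ ($Q$ an adjacent $3$-cube), hence is not the coboundary of any compactly supported $1$-cochain, so $H^2_c(\tilde X^2)\neq 0$. The same failure appears on the homology side of your argument: $H_n(\tilde X^n,\tilde X^n-C_i)\to \tilde H_{n-1}(\tilde X^n-C_i)$ is onto but its kernel involves $H_n(\tilde X^n)$, which is nonzero whenever $\tilde X$ has $(n+1)$-cells, so at $k=n$ your Hom-term is the dual of the wrong tower. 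Since Corollary \ref{Free} of this paper is precisely the case $k=n=2$, your argument as written misses the only case that matters here; handling the top dimension is the real content of the theorem (and is why \cite{GM85} formulate it in terms of $\varepsilon\tilde X^n$), and your remark that one ``must truncate to $\tilde X^n$'' has the logic backwards --- truncation is what creates the discrepancy, not what fixes it.

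Two further points need repair even for $k<n$. Your ``crucial structural observation'' ($\operatorname{Ext}(A,\mathbb Z)$ torsion, $\operatorname{Hom}(A,\mathbb Z)$ free, and the concluding free-iff-Mittag-Leffler lemma) requires the groups in the towers to be finitely generated, and $\tilde H_*(\tilde X^n-C_i)$ is not obviously so; this can be rescued in the relevant range by excision to a compact regular-neighborhood pair ($H_j(\tilde X^n,\tilde X^n-C_i)\cong H_j(N,N-C_i)$ with $N-C_i$ deformation retracting to a finite link complex), but you never address it, and without finite generation your exact sequence identifies neither the torsion subgroup nor a torsion-free quotient (e.g.\ $\operatorname{Ext}(\mathbb Q,\mathbb Z)$ and $\operatorname{Ext}(\mathbb Z_{p^\infty},\mathbb Z)\cong$ the $p$-adic integers are torsion free). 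Finally, the ``only if'' direction of your algebraic lemma --- that failure of Mittag-Leffler forces $\varinjlim_i\operatorname{Hom}(A_i,\mathbb Z)$ to be non-free --- is the core algebraic content of \cite{GM85} and is only gestured at; the $\varprojlim^1$ trivial-or-uncountable dichotomy you invoke does not by itself produce the non-free (e.g.\ divisible-type) subgroup of the colimit that the argument requires.
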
   

When $n=2$ observe that $H_0(\varepsilon \tilde X^n)$ is trivially an inverse system with epimorphic bonding maps (semistable). We immediately deduce:

\begin{corollary}\label{Free} 
If $G$ is a finitely presented group, then $H^2(G,\mathbb ZG)$ is free abelian if and only if $G$ has semistable first homology at $\infty$. 
\end{corollary}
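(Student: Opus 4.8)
The plan is to obtain this as the $n=k=2$ shadow of the Geoghegan--Mihalik result, Theorem \ref{GeoM}. Since $G$ is finitely presented it is of type $F(2)$, so I would fix a $K(G,1)$ CW-complex $X$ with finite $2$-skeleton and apply Theorem \ref{GeoM} with $n=2$, working throughout with the locally finite complex $\tilde X^2$ on which $G$ acts freely and cocompactly. The two parts of that theorem specialize, at $k=2$, to: (i) $H^2(G,\mathbb{Z}G)$ modulo torsion is free abelian if and only if $H_1(\varepsilon \tilde X^2)$ is semistable; and (ii) $H^2(G,\mathbb{Z}G)$ is torsion free if and only if $H_0(\varepsilon \tilde X^2)$ is pro-torsion free. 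My whole task is to assemble these into a single ``free abelian'' equivalence.

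First I would dispose of the torsion using (ii). The inverse system $H_0(\varepsilon \tilde X^2)$ consists of the groups $H_0(\tilde X^2-C_i)$ with inclusion-induced bonding maps, and each such group is free abelian, hence torsion free; therefore the inverse system is pro-torsion free. (As the text observes, it is moreover semistable, since the bonding maps can be taken epimorphic.) By Theorem \ref{GeoM}(ii) this forces $H^2(G,\mathbb{Z}G)$ to be torsion free, in agreement with the general fact, cited in the introduction from \cite{G}~13.7.1, that $H^2(G,RG)$ is always torsion free.

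With torsion-freeness in hand the equivalence falls out. For a torsion-free abelian group $A$ one has $A=A/\mathrm{tors}(A)$, so ``$A$ modulo torsion is free abelian'' is literally the statement ``$A$ is free abelian.'' Applying this to $A=H^2(G,\mathbb{Z}G)$ and invoking Theorem \ref{GeoM}(i), I conclude that $H^2(G,\mathbb{Z}G)$ is free abelian if and only if $H_1(\varepsilon \tilde X^2)$ is semistable. By the definitions of this section, $H_1(\varepsilon \tilde X^2)$ is semistable precisely when $\tilde X^2$ has semistable first homology at $\infty$; since $H_1$ and the relevant proper homotopy type are carried by the $2$-skeleton and the notion depends only on $G$, this is exactly the statement that $G$ has semistable first homology at $\infty$. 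Chaining these equivalences yields the corollary.

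I do not expect a genuine obstacle here: the substance is entirely packaged in the quoted Theorem \ref{GeoM}, and the corollary is its $2$-dimensional case. The only two points that warrant a line of justification are the ones I have isolated---that $\mathrm{pro}\text{-}H_0$ of the end is always pro-torsion free (which is what upgrades ``free abelian modulo torsion'' to ``free abelian''), and the identification of the abstractly defined semistable first homology at $\infty$ of $G$ with semistability of the inverse system $H_1(\varepsilon \tilde X^2)$. If anything is delicate it is checking that the choice of complex $X$ is immaterial, but this is precisely the ``depends only on $G$'' remark already recorded for semistable first homology at $\infty$.
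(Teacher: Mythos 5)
Your proposal is correct and follows exactly the paper's route: the paper likewise deduces Corollary \ref{Free} by specializing Theorem \ref{GeoM} to $n=k=2$, noting that the $H_0(\varepsilon \tilde X^2)$ condition holds automatically (so $H^2(G,\mathbb ZG)$ is torsion free and ``free abelian mod torsion'' upgrades to ``free abelian''), and then reading off the equivalence with semistability of $H_1(\varepsilon \tilde X^2)$ from part (i). Your write-up merely makes explicit the two justifications the paper compresses into ``we immediately deduce.''
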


A space $X$ is {\it simply connected at $\infty$} if for each compact set $C$ in $X$ there is a compact set $D$ in $X$ such that loops in $X-D$ are homotopically trivial in $X-C$. If $X$ is simply connected at $\infty$ then $X$ has semistable fundamental group at $\infty$ and in fact, $\pi_1(\varepsilon X,r)$ is pro-trivial for any proper ray $r$. Lemma \ref{HoroSC} shows that certain ``horoballs" are simply connected at $\infty$, an important fact in our proof of Theorem \ref{GM-SS}.

\section{Relatively Hyperbolic Groups and Cusped Spaces} \label{GMSpace} 

We are only interested in locally finite 2-complexes in this paper and we only need define what it means for the 1-skeleton of such a complex to be hyperbolic. There are a number of equivalent definitions of hyperbolicity for a geodesic metric space and it is convenient for us to use the following {\it thin triangles} definition (see  [Definition 1.5, \cite {ABC91}]).

\begin{definition} \label{Hyp} 
Suppose $\Gamma$ is a locally finite 1-complex with edge path metric $d$. Suppose $T=\bigtriangleup(x_1, x_2,x_3)$ is a geodesic triangle (with vertices $x_1$, $x_2$ and $x_3$) in $\Gamma$. 
By inscribing maximal circles in Euclidean comparison triangles (in $\mathbb R^2$), there is a point $c_i$ on the side of $T$ opposite $x_i$ such that $d(x_1,c_3)=d(x_1,c_2)$ and similarly for $x_2$ and $x_3$ (see Figure 1). The points $c_1$, $c_2$ and $c_3$ are called the {\it internal points} of $T$. 

Let $t\in [x_i,c_j]$ and $s\in [x_i,c_k]$ (where $i$, $j$ and $k$ are distict)  be such that $d(x_i,t)=d(x_i,s)$.  If there is a number $\delta\geq 0$ such that  $d(t,s)\leq \delta$ for every geodesic triangle $T$ in $\Gamma$ and all such $t$ and $s$, then $\Gamma$ is (Gromov) $\delta${\it -hyperbolic}. In particular $d(c_i,c_j)\leq \delta$ for all $i,j$. As a notational matter, we let $[a,b]$ be an arbitrary geodesic between the points $a,b\in\Gamma$. 
\end{definition}

\vspace {.3in}
\vbox to 2in{\vspace {-2in} \hspace {-.1in}
\hspace{-.6 in}
\includegraphics[scale=1]{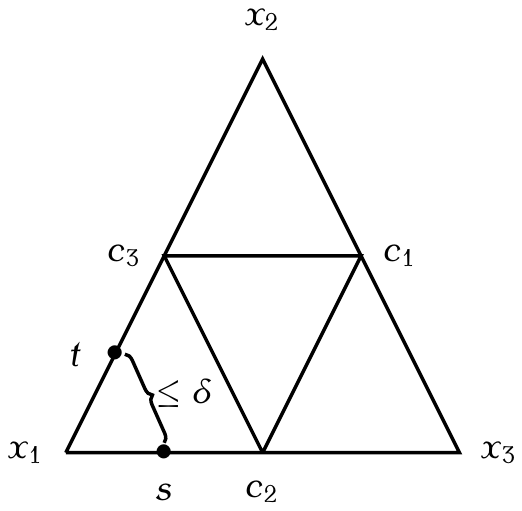}
\vss }

\vspace{-.2 in}

\centerline{Figure 1}

\medskip

D. Groves and J. Manning \cite{GMa08} consider a locally finite Gromov hyperbolic space $X$ constructed from a finitely generated group $G$ and a collection $\mathcal P$ of finitely generated subgroups. The following definitions are directly from \cite{GMa08}

\begin{definition}  Let $\Gamma$ be any 1-complex. The {\it combinatorial horoball} based on $\Gamma$,
denoted $\mathcal H(\Gamma)$, is the 2-complex formed as follows:

{\bf A)} $\mathcal H^{(0)} =\Gamma (0) \times (\{0\}\cup \mathbb N)$

{\bf B)} $\mathcal H^{(1)}$ contains the following three types of edges. The first two types are
called horizontal, and the last type is called vertical.

(B1) If $e$ is an edge of $\Gamma$ joining $v$ to $w$ then there is a corresponding edge
$\bar e$ connecting $(v, 0)$ to $(w, 0)$.

(B2) If $k > 0$ and $0 < d_{\Gamma}(v,w) \leq 2^k$, then there is a single edge connecting
$(v, k)$ to $(w, k)$.

(B3) If $k\geq 0$ and $v\in \Gamma^{(0)}$, there is an edge joining $(v,k)$ to $(v,k+1)$.

{\bf C)} $\mathcal H^{(2)}$ contains three kinds of 2-cells:

(C1) If $\gamma \subset  \mathcal  H^{(1)}$ is a circuit composed of three horizontal edges, then there
is a 2-cell (a horizontal triangle) attached along $\gamma$.

(C2) If $\gamma \subset \mathcal H^{(1)}$ is a circuit composed of two horizontal edges and two
vertical edges, then there is a 2-cell (a vertical square) attached along $\gamma$. 

(C3) If $\gamma\subset  \mathcal H^{(1)}$ is a circuit composed of three horizontal edges and two
vertical ones, then there is a 2-cell (a vertical pentagon) attached along $\gamma$, unless $\gamma$ is the boundary of the union of a vertical square and a horizontal triangle.
\end{definition}

\begin{definition} Let $\Gamma$ be a graph and $\mathcal H(\Gamma)$ the associated combinatorial horoball. Define a {\it depth function}
$$\mathcal D : \mathcal H(\Gamma) \to  [0, \infty)$$
which satisfies:

(1) $\mathcal D(x)=0$ if $x\in \Gamma$,

(2) $\mathcal D(x)=k$ if $x$ is a vertex $(v,k)$, and

(3) $\mathcal D$ restricts to an affine function on each 1-cell and on each 2-cell.
\end{definition}

\begin{definition} Let $\Gamma$ be a graph and $\mathcal H = \mathcal H(\Gamma)$ the associated combinatorial horoball. For $n \geq 0$, let $\mathcal H_n \subset \mathcal H$ be the full sub-graph with vertex set $\Gamma ^{(0)} \times \{0,\ldots ,N\}$, so that $\mathcal H_n=\mathcal D^{-1}[0,n]$.  Let $\mathcal H^n=\mathcal D^{-1}[n,\infty)$ and $\mathcal H(n)=\mathcal D^{-1}(n)$.
\end{definition}

\begin{lemma} \label{GM3.10} 
(\cite{GMa08}, Lemma 3.10) Let $\mathcal H(\Gamma)$ be a combinatorial horoball. Suppose that $x, y \in \mathcal H(\Gamma)$ are distinct vertices. Then there is a geodesic $\gamma(x, y) = \gamma(y, x)$ between $x$ and $y$  which consists of at most two vertical segments and a single horizontal segment of length at most 3.

Moreover, any other geodesic between $x$ and $y$ is Hausdorff distance at most 4 from this geodesic.
\end{lemma}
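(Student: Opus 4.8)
The plan is to compute $d_{\mathcal H}(x,y)$ exactly for $x=(a,p)$ and $y=(b,q)$ by matching a lower bound with an explicit path, and then to read off both the normal form and the Hausdorff estimate. Write $D=d_\Gamma(a,b)$. Two elementary observations drive everything. First, the depth function $\mathcal D$ changes by at most $1$ across each edge (horizontal edges preserve depth, vertical edges change it by exactly $1$), so any path from depth $p$ to depth $q$ whose maximal depth is $H$ uses at least $(H-p)+(H-q)$ vertical edges; in particular a purely vertical segment is a geodesic. Second, the projection $\pi\colon\mathcal H(\Gamma)\to\Gamma$, $(v,k)\mapsto v$, collapses vertical edges to points and sends a horizontal edge at depth $k$ to a $\Gamma$-path of length at most $2^k$. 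Since $\pi\circ\sigma$ joins $a$ to $b$ for any path $\sigma$ from $x$ to $y$, the horizontal edges of $\sigma$ satisfy $\sum_i 2^{k_i}\ge D$.

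Combining these, if $\sigma$ has maximal depth $H$ then it uses at least $\lceil D/2^H\rceil$ horizontal edges and at least $(H-p)+(H-q)$ vertical ones, whence
\[
d_{\mathcal H}(x,y)\ \ge\ \min_{H\ge\max(p,q)}\Big(2H-p-q+\big\lceil D/2^H\big\rceil\Big)=:m.
\]
For the matching upper bound I would fix a $\Gamma$-geodesic $a=u_0,u_1,\dots,u_D=b$, choose a depth $H$ realizing the minimum $m$, and take the concatenation: the vertical segment from $(a,p)$ up to $(a,H)$; the horizontal segment $(u_0,H),(u_{2^H},H),(u_{2\cdot 2^H},H),\dots,(b,H)$ of length $\lceil D/2^H\rceil$ (consecutive chosen vertices are within $\Gamma$-distance $2^H$, so each step is a single horizontal edge); and the vertical segment from $(b,H)$ down to $(b,q)$. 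This path has length exactly $m$, hence is a geodesic consisting of at most two vertical segments and one horizontal segment; when $a=b$ it is purely vertical.

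To force the horizontal segment to have length at most $3$, I would show $m$ is attained at a depth with $\lceil D/2^H\rceil\le 3$. The point is that $f(H):=2H+\lceil D/2^H\rceil$ (absorbing the constants) satisfies $f(H+1)\le f(H)$ whenever $\lceil D/2^H\rceil\ge 4$, because passing to depth $H+1$ costs two extra vertical edges but roughly halves the horizontal count, and $\lceil m^\ast/2\rceil\le m^\ast-2$ for $m^\ast\ge 4$. Thus, starting from any minimizer with at least $4$ horizontal edges, one may increase $H$ without increasing $f$ until at most $3$ horizontal edges remain; the resulting minimizer yields the claimed geodesic.

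For the \emph{moreover} clause I would first record that $f$ is unimodal in $H$ (its successive differences $f(H+1)-f(H)$ are non-decreasing), so its minimizers form a contiguous block, and a short case analysis ruling out three consecutive equal values of $f$ shows this block has length at most $2$; hence any two geodesics have maximal depths differing by at most $1$. Any geodesic, being a minimizer, must travel up, perform its (at most three) horizontal steps near the maximal depth, and travel down, so its vertices lie on the vertical fibers over $a$ and $b$ together with a short horizontal run at depth within $1$ of $H$. Comparing two such normal forms vertex by vertex—using that a point of one at depth $H\pm1$ is joined to a vertex of the other by one vertical edge followed by a single horizontal edge—bounds the Hausdorff distance by $4$. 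I expect this last comparison to be the main obstacle: it requires pinning down precisely that \emph{every} geodesic, not merely the constructed one, keeps its horizontal edges within one level of the maximal depth, which is exactly where one must exclude the spreading-out of horizontal moves by using the projection bound and the vertical lower bound in tandem.
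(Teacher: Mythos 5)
The paper itself gives no proof of Lemma \ref{GM3.10}: it is quoted verbatim from Groves--Manning \cite{GMa08}, so there is no in-paper argument to compare yours against, and I evaluate your proposal on its own merits. The first half of your argument is correct and complete. The two counting bounds (at least $2H-p-q$ vertical edges for a path of maximal depth $H$, and $\sum_i 2^{k_i}\ge D$ over horizontal edges, via the projection to $\Gamma$) give $d(x,y)\ge\min_H f(H)$ with $f(H)=2H-p-q+\lceil D/2^H\rceil$; your staircase path realizes $f(H)$ for each admissible $H$; and since $\lceil D/2^{H+1}\rceil=\lceil\lceil D/2^H\rceil/2\rceil$, one has $f(H+1)\le f(H)$ whenever $\lceil D/2^H\rceil\ge 4$, so some minimizer has at most $3$ horizontal edges. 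This cleanly proves the existence statement. Your observations that the maximal depth of \emph{any} geodesic must be a minimizer of $f$, that the minimizers form a block of length at most $2$, and that a geodesic's vertical and horizontal edge counts are exactly tight (hence its depth profile is unimodal) are also correct.

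The ``moreover'' clause, however, is not proved, and the structural claim you propose to rest it on is false. A geodesic need not keep its horizontal edges within one level of its maximal depth, nor have at most three of them. Concretely: take $p=q=0$, $D=2^h+1$ with $h$ large, and a $\Gamma$-geodesic $a=u_0,u_1,\dots,u_D=b$. Then $\min f=2h+1$, attained exactly at $H\in\{h-2,h-1\}$. The path that crosses the level-$0$ edge from $(u_0,0)$ to $(u_1,0)$, ascends vertically to $(u_1,h-1)$, takes two horizontal edges of $\Gamma$-length $2^{h-1}$ to $(b,h-1)$, and descends to $(b,0)$ has length $1+(h-1)+2+(h-1)=2h+1$, so it is a geodesic of maximal depth $h-1$ with a horizontal edge at depth $0$, arbitrarily far below the top; similarly, the staircase at level $h-2$ is a geodesic with \emph{five} horizontal edges. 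So the vertex-by-vertex comparison you outline cannot get started: the ascending part of a geodesic can drift off the vertical fiber over $a$ (above it runs over $u_1$, and in general the drift is controlled only by the sum of the $2^{k_i}$ of the low-lying horizontal edges), and controlling that drift well enough to get the constant $4$ -- rather than the weaker constant a naive application of your projection-plus-vertical-count bounds yields -- is precisely the substance of the Hausdorff-distance claim in \cite{GMa08}. You flag this yourself as the expected obstacle; it is a genuine gap, not a routine verification.
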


\begin{definition} Let $G$ be a finitely generated group, let $\mathcal P = \{P_1, \ldots , P_n\}$ be a family of finitely generated subgroups of $G$, and let $S$ be a generating set for $G$ containing generators for each of the $P_i$.   For each $i\in \{1,\ldots ,n\}$, let $T_i$ be a left transversal for $P_i$ (i.e. a collection of representatives for left cosets of $P_i$ in $G$ which contains exactly one element of each left coset).

For each $i$, and each $t \in T_i$, let $\Gamma_{i,t}$  be the full subgraph of the Cayley graph $\Gamma (G,S)$ which contains $tP_i$. Each $\Gamma_{i,t}$ is isomorphic to the Cayley graph of $P_i$ with respect to the generators $P_i \cap  S$. Then define
$$X =\Gamma (G,S)\cup (\cup \{\mathcal H(\Gamma_{i,t})^{(1)} |1\leq i\leq n,t\in T_i\}),$$
where the graphs $\Gamma_{i,t} \subset  \Gamma(G,S)$ and $\Gamma_{i,t} \subset  \mathcal H(\Gamma_{i,t})$ are identified in the obvious way. We call the space $X$ a {\it cusped} space for  $G$, $\mathcal P$ and $S$.  
\end{definition}

The next result shows cusped spaces are fundamentally important spaces.  We prove our results in these spaces. 

\begin{theorem} \label{GM3.25} 
(\cite{GMa08}, Theorem 3.25)
Suppose that $G$ is a finitely generated group and $\mathcal P=\{P_1,\ldots, P_n\}$ is a finite collection of finitely generated subgroups of $G$. Let $S$ be a finite generating set for $G$ containing generating sets for the $P_i$.  The cusped space $X$ for $G$, $\mathcal P$ and $S$ is hyperbolic if and only if  $G$ is hyperbolic with respect to $\mathcal P$.
\end{theorem}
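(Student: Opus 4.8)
The plan is to prove Theorem \ref{GM3.25} by comparing the cusped space $X$ with an independently-defined model of relative hyperbolicity, for which I will take Osin's condition (equivalently, Bowditch's): $(G,\mathcal P)$ is hyperbolic relative to $\mathcal P$ exactly when $G$ has a finite presentation relative to $\mathcal P$ with linear relative Dehn function, equivalently when the coned-off graph $\hat\Gamma=\hat\Gamma(G,\mathcal P,S)$ is $\delta$-hyperbolic and fine. The bridge is the precise geodesic normal form in combinatorial horoballs recorded in Lemma \ref{GM3.10}: two points $(v,0),(w,0)$ of a single coset $\Gamma_{i,t}$ are joined inside $\mathcal H(\Gamma_{i,t})$ by a path that climbs to depth about $\log_2 d_\Gamma(v,w)$, crosses at most three horizontal edges, and descends, so their distance in $X$ is $2\log_2 d_\Gamma(v,w)+O(1)$. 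Thus each horoball performs a controlled \emph{logarithmic} electrification of its coset. Using this, I would first set up a dictionary: collapsing each horoball to the cone point over $tP_i$ sends a path in $X$ to one in $\hat\Gamma$ of comparable coned length (each bounded horizontal excursion certifying one cone-point penetration), and conversely lifts $\hat\Gamma$-paths to $X$-paths at only logarithmic cost. This is what lets $X$ interpolate between the Cayley graph and the fully electrified coned-off graph.

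For the direction $(G,\mathcal P)$ relatively hyperbolic $\Rightarrow X$ hyperbolic, I would take the relative linear isoperimetric inequality and use it to fill loops in $X$ efficiently. Every loop in $X$ bounds a van Kampen diagram over the cusped presentation whose area is linear in its length, after one pushes the loop's horoball excursions down to depth $0$ using the vertical-square and pentagon $2$-cells, the cost of which is again controlled by Lemma \ref{GM3.10}. A linear combinatorial isoperimetric inequality in the simply connected $2$-complex obtained from the Cayley $2$-complex $Y$ together with the horoball $2$-cells then yields thin triangles for the $1$-skeleton $X$ by the standard equivalence between a linear Dehn function and Gromov hyperbolicity of the associated graph. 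The horoball geodesic normal form is exactly what keeps the triangle-thinness constant uniform across the infinitely many cusps.

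For the converse, $X$ hyperbolic $\Rightarrow (G,\mathcal P)$ relatively hyperbolic, I would transport thin triangles from $X$ to $\hat\Gamma$ through the dictionary to see that $\hat\Gamma$ is $\delta'$-hyperbolic (Farb's weak relative hyperbolicity), and then establish fineness and the bounded-coset-penetration property. Here the logarithmic rather than infinite electrification is essential: if two $X$-geodesics with common endpoints both penetrate a coset $tP_i$, the Hausdorff bound of $4$ between competing horoball geodesics in the moreover clause of Lemma \ref{GM3.10} forces their entry and exit points in that coset to lie a bounded $\Gamma$-distance apart, which is precisely the BCP conclusion; uniform finiteness of the number of long arcs at each cone point then gives fineness.

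The hard part is the converse, and specifically promoting weak hyperbolicity of $\hat\Gamma$ to genuine fineness and bounded coset penetration. Thin triangles in $X$ follow fairly directly from standard hyperbolic-space arguments once the horoball normal forms are in hand, but converting the Hausdorff-distance-$4$ control between horoball geodesics into uniform bounds on how two geodesics enter, traverse, and leave a common coset is delicate. Making those bounds uniform over all cosets and all $P_i$, and verifying that bounded horizontal travel in $X$ really does certify a single cone-point penetration in $\hat\Gamma$, is the bookkeeping on which the whole equivalence turns.
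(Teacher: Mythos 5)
You should first be aware that the paper contains no proof of this statement: Theorem \ref{GM3.25} is quoted from Groves and Manning \cite{GMa08} and used as a black box, so your sketch has to stand on its own, and it has a genuine gap in each direction. In the forward direction you propose to fill a loop in $X$ by ``pushing the loop's horoball excursions down to depth $0$'' with vertical squares and pentagons and then invoking the linear relative isoperimetric inequality, asserting that the cost of the push-down is ``controlled by Lemma \ref{GM3.10}.'' It is not. A horizontal edge at depth $k$ subtends a pair of vertices at $\Gamma$-distance up to $2^k$, so pushing it to depth $0$ produces a path of length up to $2^k$ and costs on the order of $2^k$ two-cells; since a loop of $X$-length $L$ can reach depth comparable to $L$, the pushed-down loop has length, and the push-down itself has area, exponential in $L$. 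Applying a linear relative Dehn function after an exponential blow-up yields only an exponential isoperimetric bound for $X$, which proves nothing. Any correct argument must fill the deep excursions \emph{in situ}, using the fact that combinatorial horoballs are themselves $\delta$-hyperbolic with $\delta$ uniform over all cosets (Groves--Manning's Theorem 3.8, an ingredient your sketch never establishes and which does not follow from the geodesic normal form of Lemma \ref{GM3.10} alone), and convert only the thick portion of the loop into a word over the relative presentation, recording each excursion as a single parabolic letter rather than projecting it down.

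In the converse direction, your mechanism for BCP is also misattributed. The ``moreover'' clause of Lemma \ref{GM3.10} bounds the Hausdorff distance between two geodesics in a single horoball \emph{with the same endpoints}. Two $X$-geodesics sharing endpoints far from the horoball over $tP_i$ enter and exit that horoball at \emph{different} points, so the lemma says nothing about their horoball portions; what bounds the $\Gamma$-distance between the entry points is thinness of geodesic quadrilaterals in the hyperbolic space $X$ (the hypothesis you are assuming), combined with the logarithmic distortion of the horoball metric to convert a bounded $X$-distance between depth-$0$ vertices into a bounded $\Gamma$-distance. That is exactly the step you yourself flag as ``the bookkeeping on which the whole equivalence turns,'' and it is where an actual argument is still owed; as written, the proposal is a plausible plan whose two crucial steps are either wrong as stated (the push-down) or supported by a lemma that does not apply (the BCP step).
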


We make some minor adjustments. Assume $G$ is finitely presented and hyperbolic with respect to the finitely presented subgroups $\mathcal P=\{P_1,\ldots, P_n\}$. Take a finite presentation  $\mathcal A$ for $G$ that contains finite presentations for each of the $P_i$ as a subpresentation. Let $S$ be the finite generating set for this presentation. Let $Y$ be the Cayley 2-complex for $\mathcal A$. So $Y$ is simply connected with 1-skeleton $\Gamma(G,S)$, and the quotient space $G/Y$  has fundamental group $G$. Let $X$ be a cusped  space for $G$, $\mathcal P$ and $S$. Replace the Cayley graph $\Gamma(G,S)$ in $X$ with $Y$ in the obvious way. For $g\in G$ and $i\in\{1,\ldots,n\}$ we call $gP_i$ a {\it peripheral  coset}. The depth functions on the horoballs over the peripheral  cosets extend to $X$. So that
$$ \mathcal D:X\to [0,\infty)$$ 
where $\mathcal D^{-1}(0)=Y$ and for each horoball $H$ (over a peripheral  coset) we have $H\cap \mathcal D^{-1}(m)=H(m)$, $H\cap \mathcal D^{-1}[0,m]=H_m$ and $H\cap \mathcal D^{-1}[m,\infty)=H^m$. We call each $H^m$ an {\it $m$-horoball}.

\begin{lemma} \label{geo} (\cite{GMa08}, Lemma 3.26) 
If the cusped space $X$ is $\delta$-hyperbolic, then the $m$-horoballs of $X$ are convex for all $m\geq \delta$. 
\end{lemma}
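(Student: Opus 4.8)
The plan is to prove the statement for a single horoball $H=\mathcal H(\Gamma_{i,t})$, showing that every geodesic of $X$ between two points of $H^m=\mathcal D^{-1}[m,\infty)\cap H$ stays at depth $\ge m$. First I would reduce to the case that the endpoints $x,y$ are vertices: since $\mathcal D$ is affine on cells and the region $\mathcal D^{-1}[m,\infty)$ is closed, the general claim follows by approximating arbitrary endpoints by nearby vertices and passing to a limit of geodesics. Using Lemma~\ref{GM3.10} I would then fix the explicit ``up--over--down'' geodesic $\sigma$ between $x$ and $y$; because each of its two vertical pieces is monotone and its single horizontal piece sits at the top, $\sigma$ never drops below $\min(\mathcal D(x),\mathcal D(y))\ge m$. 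In particular $d_X(x,y)=\ell(\sigma)$ and $\sigma\subset H^m$. Now let $\gamma$ be an arbitrary $X$-geodesic from $x$ to $y$ and suppose, for contradiction, that $\gamma$ carries a point $v$ with $\mathcal D(v)<m$.

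The argument then splits according to whether $\gamma$ stays inside the horoball $H$. Any path leaving $H$ must cross the attaching graph $\Gamma_{i,t}=\mathcal D^{-1}(0)\cap H$, so if $\gamma$ never reaches depth $0$ it lies entirely in $H$. In that case I would use only the combinatorial structure of the horoball: a horizontal edge at depth $k$ spans $\Gamma$-distance at most $2^k$ (edge type (B2)), so lowering depth strictly decreases the horizontal progress available per edge. Hence descending below the minimum endpoint depth and re-ascending is never length-efficient, and one checks directly, via the geodesic structure of Lemma~\ref{GM3.10}, that a geodesic contained in $H$ cannot dip below $\min(\mathcal D(x),\mathcal D(y))\ge m$. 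This contradicts the existence of $v$, and this case uses no hyperbolicity.

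The remaining, and genuinely hard, case is when $\gamma$ leaves $H$: then $\gamma$ meets $\Gamma_{i,t}$ and so attains depth $0$. This is where $\delta$-hyperbolicity and the hypothesis $m\ge\delta$ must enter. Writing $D=d_\Gamma(\bar x,\bar y)$ for the horizontal gap between the depth-$0$ projections of $x,y$, the geodesic $\sigma$ rises to an apex of depth about $m+t$, where $2^{m+t}\gtrsim D$. I would apply the thin-triangles condition of Definition~\ref{Hyp} to the degenerate triangle whose two sides are $\gamma$ and $\sigma$: the apex of $\sigma$ then lies within $\delta$ of $\gamma$, forcing $\gamma$ to attain depth at least $(m+t)-\delta$. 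But $\gamma$ also reaches depth $0$; since $\mathcal D$ is $1$-Lipschitz, the total depth-variation along $\gamma$ is a lower bound for $\ell(\gamma)=\ell(\sigma)$, and comparing these quantities shows this is impossible once $m$ is at least a constant multiple of $\delta$. (The degenerate subcase $t=0$, in which $\sigma$ is essentially horizontal of length at most $3$, cannot descend from depth $m$ to depth $0$ at all and is already covered by the previous paragraph.)

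The main obstacle is this last comparison: turning the coarse fellow-traveling supplied by hyperbolicity into the \emph{sharp} conclusion that $\gamma$ stays at depth $\ge m$, rather than merely at depth $\ge m-O(\delta)$. The resolution I expect is to exploit that vertical travel in a horoball is maximally efficient, so that an excursion to depth $0$ forces a definite length surplus of order $m$ that is incompatible with $\gamma$ being geodesic; a careful bookkeeping using the Hausdorff bound of Lemma~\ref{GM3.10} is what should pin the threshold down to exactly $m\ge\delta$, since that is precisely the point at which the apex height guaranteed by thinness and the unavoidable vertical descent can no longer coexist on a geodesic.
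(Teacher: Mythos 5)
First, a remark on the ground truth here: the paper contains \emph{no} proof of Lemma~\ref{geo} --- it is quoted verbatim from \cite{GMa08} (their Lemma 3.26) --- so I am judging your argument on its own terms rather than against an internal proof.

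Your two-case architecture (geodesics that stay in $\mathcal H$ handled combinatorially; excursions out of $\mathcal H$ handled by hyperbolicity) is the right one, and two ingredients are solid: the depth-variation lower bound on length is correct because $\mathcal D$ changes by at most $1$ across any edge of $X$, and exact convexity \emph{inside} $\mathcal H$ does follow from the push-up idea you sketch (a maximal sub-arc strictly below depth $m$, with $h$ horizontal and at least $2$ vertical edges, can be replaced by at most $\lceil h/2\rceil$ horizontal edges at depth $m$, which is strictly shorter); note that the Hausdorff-distance-$4$ clause of Lemma~\ref{GM3.10}, which you invoke, would only give depth $\ge m-4$, so the replacement argument is what you actually need there. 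The genuine gap is in the excursion case: applying the thin-triangle condition of Definition~\ref{Hyp} to the ``degenerate triangle'' with sides $\gamma$ and $\sigma$ requires $\sigma$ to be a geodesic \emph{of $X$}. Lemma~\ref{GM3.10} only makes $\sigma$ a geodesic of the combinatorial horoball in its intrinsic path metric; your assertion $d_X(x,y)=\ell(\sigma)$ --- i.e., that deep horoballs are isometrically embedded in $X$ --- is never justified (your stated reason, monotone vertical pieces with the horizontal piece on top, only shows $\sigma\subset H^m$), and it is essentially a consequence of the convexity being proved, so assuming it is close to circular. The inequality $\ell(\gamma)\le\ell(\sigma)$ is free, since $\sigma$ is a path, but the thinness step is not. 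A standard repair: run thinness on the genuine geodesic triangle $\triangle(x,y,w)$ with $w$ a vertex of $\mathcal H$ far below $x$; the side $[x,w]$ may be taken vertical, which \emph{is} an $X$-geodesic because $\mathcal D$ is $1$-Lipschitz, the side $[y,w]$ may be an arbitrary $X$-geodesic whose points have depth controlled from the $w$-end (again by the Lipschitz bound, since $d_X(y,w)\le \mathcal D(w)-m+3$), and one concludes that every point of $\gamma$ has depth at least roughly $m-(\delta+3)/2>0$, so $\gamma$ never reaches depth $0$; your combinatorial case then finishes.

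Separately, the constants do not come out as stated even if one grants that $\sigma$ is an $X$-geodesic: your comparison reads $2(A-\delta)\le\ell(\gamma)\le\ell(\sigma)\le 2(A-m)+3$ (with $A$ the apex depth), which is contradictory only when $2(m-\delta)>3$, i.e.\ $m\ge\delta+2$. So the sketch proves convexity of $m$-horoballs for $m\ge\delta+O(1)$, not for the stated threshold $m\ge\delta$; the ``careful bookkeeping'' you defer to is exactly the missing content. For this paper's applications the weaker threshold would be harmless, but as a proof of the lemma as stated the argument falls short both here and at the isometric-embedding step.
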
 

Given two points $x$ and $y$ in a horoball $H$, there is a shortest path in $H$ from $x$ to $y$ of the from $(\alpha, \tau,\beta)$ where $\alpha$ and $\beta$ are vertical and $\tau$ is horizontal of length $\leq 3$. Note that if $\alpha$ is non-trivial and ascending and $\beta$ is non-trivial and decending, then $\tau$ has length either 2 or 3. 

Let $\ast$ be the identity vertex of $Y\subset X$ and $d$ the edge path distance in $X$ (so $d$ is measured in $X^1$, the 1-skeleton of $X$). For $v$ a vertex of $X$ and $K\in [0,\infty)$, let $B^1(v, K)=\{y\in X^1: d(y,v)\leq K\}$ and $B(v,K)=B^1(v,K)$ union all 2-cells of $X$ with boundary a subset of $B^1(v,K)$. We call $B(v,K)$ the ball of radius $K$ about $v$ in $X$. 

\begin{lemma}\label{Trans} 
Suppose $X$ is a cusped space for $(G,\mathcal P)$ and $Y\subset X$ is the (simply connected) Cayley 2-complex of a finite presentation for $G$. Given an integer $K$, there is an integer $N(K)$ such that if $\gamma$ is an edge path loop in  $X$ of length $\leq K$, then $\gamma$ is homotopically trivial in $B(v, N(K))$ for any vertex $v$ of $\gamma$. 
\end{lemma}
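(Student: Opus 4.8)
The plan is to split the filling into a \emph{horizontal} part, carried by the Cayley $2$-complex $Y$ where the cocompact $G$-action is available, and a \emph{vertical} part, carried by the horoballs where depth-independence must be extracted from the combinatorial structure. Two structural facts drive everything: first, there is a uniform bound $L_0$ on the number of edges in the boundary of any $2$-cell of $X$, since the relators of the finite presentation have bounded length and the horoball cells are triangles, squares and pentagons; second, $X$ is simply connected, being the union of the simply connected complex $Y$ with the simply connected combinatorial horoballs glued along the cosets $gP_i$. Hence every edge-path loop $\gamma$ does bound, and the entire content of the lemma is the \emph{spatial} control, namely that a loop of length $\le K$ bounds inside $B(v,N(K))$ with $N(K)$ independent of where $\gamma$ sits (note $\gamma\subset B^1(v,K)$ automatically, since one may walk along $\gamma$ to any of its vertices). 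I expect the crux to be exactly this depth-uniformity: the $G$-action on $X$ is \emph{not} cocompact because the horoballs have infinite depth, so one cannot merely invoke finiteness of loop-orbits as one can in $Y$.

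For loops lying entirely in $Y$ I would argue by cocompactness. Since $G$ acts freely and transitively on the vertices of the locally finite complex $Y$, there are only finitely many $G$-orbits of based edge-path loops of any fixed length $\le K'$. Each orbit representative is null-homotopic in the simply connected complex $Y$, hence inside some finite subcomplex, hence inside a ball of finite radius about its basepoint; taking $N_Y(K')$ to be the maximum of these finitely many radii and translating by $G$ shows that \emph{every} $Y$-loop of length $\le K'$ bounds in a ball of radius $N_Y(K')$ about one of its vertices. Because $X$-distance never exceeds $Y$-distance, such a filling also lies in the corresponding $X$-ball.

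The heart of the matter is a horoball estimate that is uniform in the depth. Fix a horoball $H$ and an edge-path loop $\omega$ in $H$ of length $\le \ell$. Using the geodesic structure of Lemma \ref{GM3.10}, I would ``push $\omega$ toward the cusp'': the elementary move carries a horizontal edge at level $k$, joining $(x,k)$ to $(y,k)$ with $d_\Gamma(x,y)\le 2^k$, to the corresponding horizontal edge at level $k+1$ across a single vertical square, a homotopy supported within $X$-distance $2$ of the edge. The decisive point is that horizontal edges at level $k$ span $\Gamma$-distance up to $2^k$, so once $\omega$ has been raised to a level $L$ with $2^L$ exceeding the $\Gamma$-diameter of the projection of its vertices, those vertices are pairwise joined by horizontal edges and $\omega$ collapses through horizontal triangles. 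Since a length-$\le\ell$ loop has $\Gamma$-projection of diameter at most $\ell\,2^{M}$, where $m$ and $M\le m+\ell$ are its minimal and maximal depths, the number of levels it must be raised is at most $(M-m)+\log_2\ell+O(1)=O(\ell)$, \emph{independently of $m$}. As $\omega$ has $X$-diameter $\le\ell$ and is raised $O(\ell)$ levels, the whole push-to-the-cusp homotopy is supported within $X$-distance $O(\ell)$ of $\omega$, yielding a depth-free filling radius $N_1(\ell)$.

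Finally I would assemble the pieces. Decompose $\gamma$ into the maximal subpaths it spends in $Y$ and the maximal excursions $\beta_1,\dots,\beta_r$ (with $r\le K$) it makes into horoballs, each $\beta_j$ running between depth-$0$ vertices $a_j,b_j$ of a single coset $c_j\subset Y$. As $\beta_j$ has length $\le K$, Lemma \ref{GM3.10} bounds the coset distance $d_{c_j}(a_j,b_j)$ by $f(K):=3\cdot 2^{K/2}$, so there is a depth-$0$ path $\delta_j$ in $c_j$ from $a_j$ to $b_j$ of length $\le f(K)$, every vertex of which lies within $X$-distance $O(K)$ of $a_j$. Applying the horoball estimate to the loop $\beta_j\delta_j^{-1}$ (length $\le K+f(K)$, touching depth $0$) replaces each excursion by $\delta_j$ through a homotopy supported in $B(a_j,N_1(K+f(K)))$. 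What survives is a loop $\gamma'$ lying entirely in $Y$ of length at most $K+rf(K)\le K+Kf(K)=:K'(K)$, which the cocompact argument fills at cost $N_Y(K'(K))$. Setting $N(K)=O(K)+N_1(K+f(K))+N_Y(K'(K))$, a function of $K$ alone, completes the plan. The only delicate bookkeeping beyond the depth-uniformity is that collapsing an excursion onto a depth-$0$ coset path lengthens it (by up to $f(K)$, since horoballs are exponential shortcuts), but this blow-up depends only on $K$ and so does no harm.
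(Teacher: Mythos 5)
Your proposal is correct and follows essentially the same route as the paper's proof: a depth-uniform filling of horoball loops obtained by pushing them toward the cusp, followed by decomposing a general loop into $Y$-subpaths and horoball excursions, replacing each excursion by a level-$0$ path with controlled (exponential in $K$) length, and filling the resulting $Y$-loop using the fact that $Y$ contains only finitely many edge path loops of a given length up to $G$-translation. The differences are only cosmetic: the paper collapses the raised horoball loop by repeatedly halving its length with vertical pentagons rather than coning off with horizontal triangles at a sufficiently high level, and it produces the level-$0$ replacement path by pushing the excursion down through squares and pentagons (length $< K2^K$) rather than invoking the geodesic description of Lemma \ref{GM3.10}.
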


\begin{proof}
If $\gamma$ has image in a horoball $\mathcal H$, then by using vertical squares  (with two horizontal edges and two vertical edges), $\gamma$ can be slid up in $\mathcal H$ to a loop $\tau_0$ of length $\leq K$ and in a single level of $\mathcal H$ (the highest level attained by $\gamma$). For $v$ a vertex of $\gamma$, and $w$ a vertex of $\tau_0$, there is a vertical path of length $<K$ from $w$ to a vertex of $\gamma$ and so $d(v,w)<2K$. Then $\gamma$ is homotopic to $\tau_0$  by a homotopy (that only uses vertical squares) with image in $B(v, 2K)$ for any vertex $v$ of $\gamma$.  Using vertical pentagons (with two vertical edges, two horizontal edges of $\tau_0$ and one horizontal edge in a level above $\tau_0$), and at most one vertical square, $ \tau_0$ is homotopic to a loop $\tau_1$ with image  one level above the level of $\tau_0$, and $|\tau_1|\leq {|\tau_0|\over 2}+1$. If $v$ is a vertex of $\gamma$ and $w$ a vertex of $\tau_1$, then $d(v,w)<2K+1$ and $\gamma$ is homotopic to $\tau_1$ in $B(v,2K+1)$. If $k$ is the smallest integer such that $K<2^k$, then $\tau_0$ need only be slid up  at most $k-1$ times to the loop $\tau_{k-1}$ where $|\tau_{k-1}|= 2$ (a trivial loop of the form $(e,e^{-1})$). If $v$ is a vertex of $\gamma$ and $w$ a vertex of $\tau_{k-1}$, then $d(v,w)< 2K+k$, and $\gamma$ is homotopically trivial in $B(v,2K+k)$.

If $\gamma$ does not have image in a horoball, write $\gamma=(\alpha_1, \beta_1, \ldots, \alpha_n, \beta_n)$ where $\alpha_i$ is a maximal subpath of $\gamma$ in $Y$ and $\beta_i$ has image in a horoball. By cyclically permuting the edges of $\gamma$ we may assume that $\alpha_1$ is non-trivial (but $\beta_n$ might be trivial). Since $\beta_i$ begins and ends in $Y$, and has length $<K$ there is an edge path $\beta_i'$ in $Y$ with the same end points as $\beta_i$ and $|\beta_i'|<K2^K$ (using vertical squares and pentagons, push $\beta_i$ down to level 0 and note that each horizontal edge of $\beta_i$ is pushed to an edge path of length $<2^K$). The loops $(\beta_i^{-1},\beta_i')$ have length $<(2^K+1)K$ and by the first part there is a constant $N_1(K)$ satisfying the conclusion of the lemma for such loops. In particular $\alpha$ is homotopic to a loop $\alpha'$ in $Y$ by a homotopy in $B(v,N_1(K)+K)$ for any vertex $v$ of $\alpha$, and $|\alpha'|\leq K^2(2^K+1)$. In $Y$ there are only finitely many edge path loops of a given length up to translation by $G$. Hence there is an integer $N_2(K)$ such that any loop in $Y$ of length $\leq K^2(2^K+1)$ is homotopically trivial in $B(v, N_2(K))$ for any vertex $v$ of that loop. Then $\alpha$ is homotopically trivial in $B(v,N_2(K)+N_1(K)+K)$ for any vertex $v$ of $\alpha$. Choose $N(K)$ to be the larger of $N_2(K)+N_1(K)+K$ and $2K+k$. 
\end{proof}

\section{The Proof of Theorem {\ref{GM-SS}}}\label{T2} 

For the remainder of the paper, we will assume that  $G$ is a finitely presented group hyperbolic relative to $\mathcal P=\{P_1,\ldots ,P_n\}$ a set of 1-ended finitely presented peripheral subgroups (with $G\ne P_i$ for all $i$). We assume  $\partial (G, \mathcal P)$ is locally connected. The space $X$ is the cusped space for the group $(G,\mathcal P)$, and $Y\subset X$ is the Cayley 2-complex for $G$ (corresponding to a presentation for $G$ that contains presentations  for the peripherals as subpresentations). We have that $X$ is $\delta$-hyperbolic and $\partial X=\partial (G,\mathcal P)$. A point $p\in \partial (X)$ is an equivalence class of geodesic edge path rays.
Geodesic rays $r$ and $s$ belong to the same equivalence class $p\in \partial X$ if there is a number $K$ such that $d(r(t),s(t))\leq K$ for all $t\in [0,\infty)$.  We write $r\in [s]=p\in \partial X$. 
The space $\partial X$ is a compact metric space. 

\begin{remark}\label{Follow} 
The only general fact that we use about the boundary $\partial X$ of a $\delta$ hyperbolic space $X$ is:  Two points $x,y\in \partial X$ are ``close" precisely when any two representatives $r_x\in x$ and $r_y\in y$ (with $r(0)=s(0)=\ast$) fellow travel for a long time. More precisely, given an integer $K>0$  there is $\epsilon(K)>0$ such that if $x$ and $y$ are within $\epsilon$ in $\partial X$, then $d(r_x(t),r_y(t))\leq \delta$ for all $t\in [0,K]$ all $r_x\in x$ and all $r_y\in y$.
\end{remark}

\begin{lemma} \label{AlmostExt}  (\cite{MS18}, Lemma 6.3) 
For any vertex $v$ of $Y$ there is a geodesic edge path ray $r_v$ in $X$ such that $r_v(0)=\ast$, and for some $t_v\in [0,\infty)$ we have $d(r_v(t_v),v)\leq \delta$ and $r_v|_{[t_v,\infty)}$ has image in $\mathcal D^{-1}([0,21\delta])$.
\end{lemma}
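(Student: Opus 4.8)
The plan is to produce $r_v$ as the initial segment $[\ast, v]$ followed (up to straightening) by a geodesic ray that leaves $v$ ``directly away from $\ast$'' while never diving deep into a horoball. Concretely, I would first construct a geodesic ray $s$ with $s(0)=v$ whose image lies in $\mathcal D^{-1}([0,c\delta])$ for a fixed constant $c$, and which heads away from $\ast$ in the sense that $d(\ast, s(k)) \ge d(\ast,v)+k-2\delta$ for all $k$. Granting such an $s$, set $p=[s]\in\partial X$ and let $w_k=s(k)$; choosing geodesics $[\ast,w_k]$ and passing to a subsequential limit (Arzel\`a--Ascoli, using that $X$ is locally finite) yields a geodesic ray $r_v$ from $\ast$ with $[r_v]=p$.

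Now I would read off the two conclusions from thin triangles. Since $d(\ast,w_k)\ge d(\ast,v)+d(v,w_k)-2\delta$, the Gromov product $(\ast\mid w_k)_v$ is at most $\delta$, so $v$ lies within a bounded multiple of $\delta$ of each geodesic $[\ast,w_k]$ and hence of the limit $r_v$; taking $t_v$ to be the parameter at which $r_v$ is closest to $v$ gives $d(r_v(t_v),v)\le\delta$ after adjusting constants through the internal-point estimate of Definition \ref{Hyp}. For the depth bound, $r_v$ and $s$ are asymptotic, so applying the thin-triangle condition to the triangle with sides $[\ast,v]$, $s$, and $[\ast,w_k]$ shows that past $t_v$ the ray $r_v$ stays within a fixed multiple of $\delta$ of the shallow ray $s$. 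As $s$ has depth at most $c\delta$ and $r_v(t_v)$ has depth at most $\delta$, this pins the image of $r_v|_{[t_v,\infty)}$ inside $\mathcal D^{-1}([0,21\delta])$ once the constant $c$ and the fellow-traveling constant are tallied.

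The main obstacle is the very first step: constructing the shallow escaping ray $s$. The difficulty is intrinsic to cusped spaces --- by Lemma \ref{GM3.10} a geodesic between two depth-$0$ vertices at $\Gamma$-distance $D$ within a single peripheral coset prefers to dip to depth $\approx\log_2 D$, so a carelessly chosen ray from $v$ can reach arbitrarily large depth. To keep $s$ at bounded depth I would route it through $Y=\mathcal D^{-1}(0)$ so that it never traverses a long segment inside a single peripheral coset, using that $G$ is $1$-ended to escape to infinity in $Y$ without being funneled into one coset, and then invoke convexity of the $m$-horoballs (Lemma \ref{geo}) together with Lemma \ref{GM3.10} to certify that the resulting $X$-geodesics stay within depth a fixed multiple of $\delta$ and that this bounded-depth property survives passage to the limit. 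Establishing that such a depth-controlled, $\ast$-avoiding direction exists --- equivalently, locating a suitable conical limit point in the part of $\partial X$ ``opposite'' $\ast$ --- is the technical heart of the argument; the remaining steps are standard hyperbolic bookkeeping.
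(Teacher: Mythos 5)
First, a point of comparison: the paper does not prove Lemma \ref{AlmostExt} at all; it is imported verbatim from \cite{MS18} (Lemma 6.3), so there is no internal proof to measure your attempt against, and it must be judged on its own. Judged that way, it has a genuine gap. Your argument is a reduction: \emph{if} there exists a geodesic ray $s$ with $s(0)=v$, image in $\mathcal D^{-1}([0,c\delta])$, and $d(\ast,s(k))\ge d(\ast,v)+k-2\delta$ for all $k$, \emph{then} an Arzel\`a--Ascoli limit of the geodesics $[\ast,s(k)]$ plus thin-triangle estimates yields the lemma. The reduction itself is sound, modulo constants: the Gromov product bound $(\ast\mid s(k))_v\le\delta$ places $v$ within about $2\delta$ of $[\ast,s(k)]$ under the paper's internal-point definition, not within $\delta$, so ``adjusting constants'' cannot recover the stated bound $d(r_v(t_v),v)\le\delta$, and your final depth bound depends on the unconstructed constant $c$. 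But the real problem is that the input to the reduction --- the shallow, $\ast$-avoiding ray $s$ --- is, after concatenation with $[\ast,v]$, essentially the assertion of the lemma itself, and you explicitly leave its existence unproven (``the technical heart''). The proposal therefore restates the content of the lemma rather than proving it.

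Second, the tools you point to for the missing step do not suffice. One-endedness of $G$ is not among the standing hypotheses here (Theorem \ref{GM-SS} and this section assume only that $G$ is finitely presented, the $P_i$ are 1-ended proper subgroups, and $\partial (G,\mathcal P)$ is locally connected), so you may not invoke it; and even where it holds, it only produces rays that are proper in the word metric of $Y$, which is far weaker than what you need: such a ray can recede from $\ast$ in the cusped metric arbitrarily slowly (logarithmically, exactly when it is funneled along a single peripheral coset), so it need not satisfy $d(\ast,s(k))\ge d(\ast,v)+k-2\delta$ even coarsely. Likewise, convexity of $m$-horoballs (Lemma \ref{geo}) and the normal form for horoball geodesics (Lemma \ref{GM3.10}) constrain a geodesic once its endpoints are given; they do not produce a direction along which no deep excursion is ever forced. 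Avoiding deep excursions in \emph{all} horoballs simultaneously, with the uniform bound $21\delta$, is precisely the nontrivial content --- in the Fuchsian model it amounts to finding, within bounded distance of every point, a uniformly badly-approximable direction --- and a complete proof must actually construct such a direction (for instance by an inductive choice of vertices of $Y$ with verified progress and depth control at each step, which is what \cite{MS18} does). Nothing in your outline substitutes for that construction.
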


\begin{remark}\label{N1} 
For $t>0$, Lemma \ref{Trans} implies there is an integer $N_1(t)$ such that any edge path loop  in $X$ of length $\leq 7t+3$ is homotopically trivial in the ball $B(v,N_1(t))$ for every vertex $v$ of the loop. The constant $N_1(\delta)$ is used in the proofs of Lemma \ref{SmallH} and Theorem \ref{GM-SS}. 
\end{remark}

The next lemma implies that any two geodesic edge path rays in $X$ are properly homotopic (Corollary \ref{Geo}), and is an important tool used to prove Theorem \ref{GM-SS}.

\begin{lemma}\label{SmallH} 
Suppose $\tau:[0,1] \to \partial X$ is a path. Let $\tau_t$ be a geodesic edge path ray at $\ast$ representing $\tau(t)$ with $\tau_0=r$ and $\tau_1=s$. Suppose $K>0$ is an integer such that for each $t\in [0,1]$, $d(\tau_t(K), r(K))\leq \delta$ (so that if $\tau$ is a small diameter path, $K$ can be chosen large). Let $\beta$ be an edge path from $r(K)$ to $s(K)$ of length $\leq \delta$. Then $r|_{[K,\infty)}$ is properly homotopic rel $\{r(K)\}$ to $(\beta, s|_{[K,\infty)})$ by a homotopy in $X-B(\ast, K-N_1(\delta))$.
 \end{lemma}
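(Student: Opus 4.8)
The plan is to build the proper homotopy explicitly as a ``refining fan'' swept out by the geodesic rays $\tau_t$ for $t$ ranging over a nested sequence of finite subdivisions of $[0,1]$, and to control it using Remark \ref{N1}. First I would use the continuity of $\tau$ together with Remark \ref{Follow} and the compactness of $[0,1]$ to produce, for each integer $m\ge 0$, a finite subdivision $S_m$ of $[0,1]$ with two properties: (i) consecutive points $t,t'$ of $S_m$ satisfy $d(\tau_t(h),\tau_{t'}(h))\le\delta$ for every integer $h\in[K,K+m+1]$ (so their representative rays $\delta$-fellow-travel out to height $K+m+1$), and (ii) $S_m\subseteq S_{m+1}$. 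Such nested subdivisions exist by induction on $m$: given $S_m$, one refines each of its consecutive pairs by inserting finitely many new parameters, which is possible because each $t\in[0,1]$ has a neighborhood of parameters whose rays fellow-travel out to height $K+m+2$ (Remark \ref{Follow}), and $[0,1]$ is compact.

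Next I would assemble the homotopy from two kinds of small loops. Over the strip of heights $[K+m,K+m+1]$ I use the subdivision $S_m$ and fill, for each consecutive pair $t,t'\in S_m$, the quadrilateral bounded by the vertical segments $\tau_t|_{[K+m,K+m+1]}$ and $\tau_{t'}|_{[K+m,K+m+1]}$ and by the two horizontal connectors $[\tau_t(K+m),\tau_{t'}(K+m)]$ and $[\tau_t(K+m+1),\tau_{t'}(K+m+1)]$ (each a geodesic of length $\le\delta$ by (i)); this loop has length $\le 2+2\delta$. At each height $K+m$, where the subdivision jumps from $S_{m-1}$ up to the finer $S_m$, I interpolate by inserting the new points of $S_m$ one at a time: each insertion of a point $t^\ast$ between $t,t'$ replaces the connector $[\tau_t(K+m),\tau_{t'}(K+m)]$ by the two connectors through $\tau_{t^\ast}(K+m)$, bounding a ``transition triangle'' of length $\le 3\delta$. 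The base height $K$ is special: there I take the given $\beta$ as the connector for the pair $\{0,1\}$ and then refine to $S_0$; by hypothesis every $\tau_t(K)$ lies within $\delta$ of $r(K)$, so all connectors and transition triangles used at height $K$ have length $\le 6\delta$. In every case the bounding loop has length $\le 7\delta+3$, so Remark \ref{N1} fills it inside $B(v,N_1(\delta))$, and I always choose $v$ to be one of its ray vertices $\tau_t(K+m)$, which satisfies $d(\ast,v)=K+m\ge K$. Fitting these filled cells together along the shared rays and connectors yields a map of a $2$-complex homeomorphic to $[0,\infty)\times[0,1]$ whose $t=0$ edge is $r|_{[K,\infty)}$, whose $t=1$ edge is $(\beta,s|_{[K,\infty)})$, and which is constant $r(K)$ on the initial edge, i.e.\ a homotopy rel $\{r(K)\}$.

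Finally I would verify the two global conditions. Since every filled cell lies in $B(v,N_1(\delta))$ for a ray vertex $v$ at distance $K+m$ from $\ast$, each point of the homotopy at level $m$ is at distance $\ge K+m-N_1(\delta)\ge K-N_1(\delta)$ from $\ast$, so the image lies in $X-B(\ast,K-N_1(\delta))$. Properness is then automatic: a compact (hence bounded) set $C\subseteq B(\ast,R)$ can meet only the finitely many levels $m\le R-K+N_1(\delta)$, and each level contributes only finitely many cells, so the preimage of $C$ is compact.

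I expect the main obstacle to be organizing the change of subdivision as one moves outward without ever creating a long loop. Naively passing from the coarse connector $\beta$ at height $K$ directly to a fine subdivision would produce a loop whose length grows with the number of subdivision points; the device of refining one point at a time through bounded transition triangles (combined with the fact that at height $K$ all rays are already $\delta$-close, and at heights $>K$ the relevant pairs $\delta$-fellow-travel by construction of $S_m$) is what keeps every loop within the length bound $7\delta+3$ demanded by Remark \ref{N1}. The remaining care is purely bookkeeping: reparametrizing the $2$-complex as a genuine product $[0,\infty)\times[0,1]$ and checking the boundary readings.
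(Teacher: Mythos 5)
Your construction is, in outline, the paper's own proof: a fan swept out by the rays $\tau_t$ over finer and finer subdivisions of $[0,1]$, filled strip by strip with geodesic quadrilaterals of perimeter $\le 2\delta+2$ and small ``transition'' cells where the subdivision refines, each filled via Remark \ref{N1} around a vertex lying on some $\tau_t$ at distance $\ge K$ from $\ast$, with properness coming from the fact that cells at level $m$ avoid $B(\ast,K+m-N_1(\delta))$. (The paper runs the same scheme in blocks $[K,2K],[2K,3K],\ldots$ with recursively refined subdivisions rather than level by level; that difference is cosmetic.) However, there is a genuine gap at the transition step, and it traces back to how you stated condition (i). You require $d(\tau_t(h),\tau_{t'}(h))\le\delta$ only for \emph{consecutive} points $t,t'$ of $S_m$. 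Now suppose $t<t'$ are consecutive in $S_{m-1}$ and $S_m$ inserts many new points $a_1<\cdots<a_k$ between them. However you order the one-at-a-time insertions at height $K+m$, the very first one creates connectors from some $\tau_{a_j}(K+m)$ to \emph{both} $\tau_t(K+m)$ and $\tau_{t'}(K+m)$, and at least one of these joins parameters separated by at least $(k+1)/2$ steps of $S_m$. Condition (i) bounds such a distance only by chaining, i.e.\ by roughly $k\delta/2$, and nothing in the hypotheses prevents it from genuinely being unbounded as the refinement gets finer: consecutive rays can $\delta$-fellow-travel to height $K+m+1$ while non-consecutive ones spread apart at height $K+m$, because $\tau$ restricted to $[t,t']$ may wander (think of $\tau|_{[t,t']}$ traversing a small round circle in $\partial\mathbb{H}^3$; points at fixed angular separation are roughly $2(K+m)+\mathrm{const}$ apart at height $K+m$). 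So your claim that every transition triangle has perimeter $\le 3\delta$ (and $\le 6\delta$ at height $K$) does not follow from (i), and without a \emph{uniform} perimeter bound Remark \ref{N1} no longer applies with the fixed constant $N_1(\delta)$ --- which is exactly what the containment in $X-B(\ast,K-N_1(\delta))$ and your properness argument both rest on.

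The fix is precisely the clause you dropped from the paper's formulation of the subdivision step, and your own existence argument nearly supplies it: using Remark \ref{Follow} and uniform continuity of $\tau$, choose $S_m$ so that each subinterval $[t,t']$ between consecutive points of $S_m$ has $\tau$-image of diameter small enough that \emph{every pair} $u,u'\in[t,t']$ satisfies $d(\tau_u(h),\tau_{u'}(h))\le\delta$ for all $h\le K+m+1$ --- not merely the endpoint pair. (This is what the paper demands of its points $q(0)<\cdots<q(n)$: the condition holds for each $t\in[q(i),q(i+1)]$, not just at the $q(i)$.) Then every connector occurring in your insertion process at height $K+m$ joins parameters lying in a common subinterval of $S_{m-1}$, hence has length $\le\delta$ (length $\le 2\delta$ at the base height $K$, using the hypothesis $d(\tau_t(K),r(K))\le\delta$), every transition triangle has perimeter $\le 3\delta$ (resp.\ $\le 6\delta$), and the rest of your argument, including the properness check, goes through; at that point your proof coincides with the paper's.
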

 \begin{proof}
 Choose points $q(0)=0<q(1)<\ldots <q(n)=1$ such that for each $i$, the diameter of $\tau([q(i),q(i+1)])$ is small enough to ensure that for each $t\in [q(i), q(i+1)]$ and $k\in [0,2K]$, $d(\tau_t (k), \tau_{q(i)}(k))\leq \delta$. In particular, 
 $$d(\tau_{q(i)}(k), \tau_{q(i+1)}(k)\leq \delta \hbox{ for all } k\in [0,2K]$$

Write the consecutive vertices of $\tau_{q(i)}$ as $\ast=v_0, v_1,\ldots$  and those of $\tau_{q(i+1)}$ as $\ast=w_0,w_1,\ldots$. Note that $d(v_j,w_j)\leq \delta$ for all $j\leq 2K$. Let  $[v_K,w_K]=\gamma_i$, and $[v_{2K},w_{2K}]=\beta_i$ be geodesic edge paths of length $\leq \delta$. For ease of notation, let $[v_{j-1},v_{j}] =\tau_{q(i)}|_{[j-1,j]}$ and $[w_{j-1},w_{j}] =\tau_{q(i+1)}|_{[j-1,j]}$.

\vspace {.5in}
\vbox to 2in{\vspace {-2in} \hspace {-.6in}
\hspace{-.6 in}
\includegraphics[scale=1]{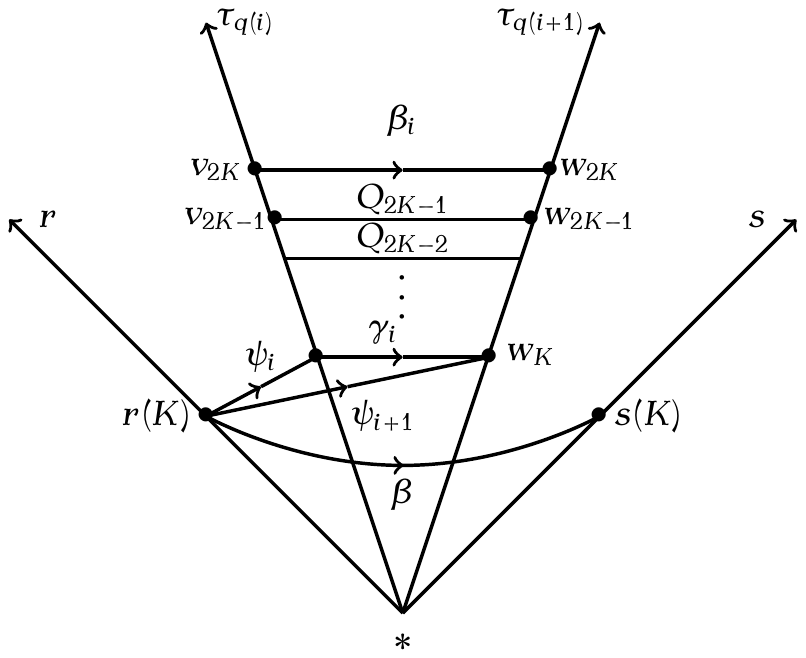}
\vss }

\vspace{.3 in}

\centerline{Figure 2}

\medskip

The geodesic quadrilaterals $Q_j=([v_j,v_{j+1}], [v_{j+1}, w_{j+1}], [w_{j+1}, w_j], [w_j,v_j]$ have an edge on $\tau_{q(i)}$ an edge on $\tau_{q(i+1)}$ and for $j\leq 2K-1$ boundary length $\leq 2\delta+2$. So for $j\leq 2K-1$,  $Q_j$ is homotopically trivial by a homotopy in $B(v,N_1(\delta))$ for any vertex $v$ of $Q_j$  (see Remark \ref{N1}). In particular, for $j\in \{K,\ldots, 2K-1\}$,  $Q_j$ is homotopically trivial in $X-B(\ast, K-N_1(\delta))$ (see Figure 2).

The rectangle $R_i$ bounded by $\tau_{q(i)} ([K,2K])$, $\tau_{q(i+1)} ([K,2K])$, $\beta_i$ and $\gamma_i$ is subdivided by the $Q_j$ for $j\in \{K,\ldots, 2K-1\}$.  Hence $R_i$ is homotopically trivial in $X-B(\ast, K-N_1(\delta))$.
Combining the null homotopies for the $R_i$, we have that the rectangle determined by $r([K,2K])$, $s([K,2K])$, $(\beta_0,\ldots, \beta_{n-1})$ and $(\gamma_0,\ldots, \gamma_{n-1})$ is homotopically trivial by a homotopy $H_1'$, that avoids $B(\ast, K-N_1(\delta))$.
  
Next let $\psi_i$ be an edge path of length $\leq \delta$ from $r(K)$ to $\tau_{q(i)}(K)$. Each of the edge path loops $(\gamma_0, \gamma_1,\psi_2^{-1})$, $(\psi_i, \gamma_i, \psi_{i+1}^{-1})$, $(\psi_{n-1},\gamma_{n-1}, \beta^{-1})$ has length $\leq 3\delta$ and is homotopically trivial by a homotopy avoiding $B(\ast, K-N_1(\delta))$.   
Combining these homotopies we have $(\gamma_0,\ldots, \gamma_{n-1})$ is homotopic to $\beta$ by a homotopy avoiding $B(\ast, K-N_1(\delta))$. Combining this homotopy with $H_1'$ we have a null homotopy $H_1$ of the loop determined by $r([K,2K])$, $s([K,2K])$, $(\beta_0,\ldots, \beta_{n-1})$ and $\beta$, such that $H_1$ avoids $B(\ast, K-N_1(\delta))$.

Note that each $\beta_i$ has image avoiding $B(\ast, 2K-{\delta\over 2})$. For each $i$, letting $\beta_i$, $\tau_{q(i)}$ and $\tau_{q(i+1)}$ play the role of $\beta$, $r$ and $s$ respectively and $\tau|_{[q(i),q(i+1)]}$ the role of $\tau$,  a completely analogous argument shows that the edge path $((r([2K,3K]))^{-1}, \beta_0,\ldots , \beta_{n-1}, s([2K,3K]))$ is homotopic to a path with image avoiding $B(\ast, 3K-{\delta\over 2})$ by a homotopy $H_2$ with image avoiding the ball $B(\ast, 2K-N_1(\delta))$. Patching together the $H_i$ gives the desired proper homotopy. 
\end{proof}

\begin{corollary} \label{Geo} 
Suppose $r$ and $s$ are geodesic edge path rays in $X$ and $\beta$ is an edge path from $r(0)$ to $s(0)$ then $r$ is properly homotopic to $(\beta, s)$ rel$\{r(0)\}$. 
\end{corollary}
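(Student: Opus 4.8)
The plan is to deduce the corollary from Lemma \ref{SmallH} by transporting $r$ to $(\beta,s)$ along a path in $\partial X$ joining their endpoints. First I would reduce to the case $r(0)=s(0)=\ast$. For an arbitrary geodesic ray $\rho$ in $X$, a standard limiting argument in the proper $\delta$-hyperbolic space $X$ yields a geodesic ray $\hat\rho$ with $\hat\rho(0)=\ast$ and $[\hat\rho]=[\rho]$. Having the same endpoint in $\partial X$, the rays $\rho$ and $\hat\rho$ are asymptotic, so the ladder of geodesic quadrilaterals between consecutive vertices of $\rho$ and $\hat\rho$ has uniformly bounded side lengths; by Lemma \ref{Trans} each such quadrilateral is null-homotopic in a ball of bounded radius about one of its vertices, and patching these null-homotopies gives a proper homotopy rel $\{\rho(0)\}$ from $\rho$ to $(\eta,\hat\rho)$, where $\eta$ is a geodesic edge path from $\rho(0)$ to $\ast$. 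Applying this to both $r$ and $s$, and using that $X$ is simply connected to absorb the resulting compact edge paths $\eta_r,\beta,\eta_s$ into an inessential loop at $\ast$, the statement reduces to showing that the geodesic rays $\hat r$ and $\hat s$ based at $\ast$ (with $[\hat r]=[r]$ and $[\hat s]=[s]$) are properly homotopic rel $\{\ast\}$.

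Next, since $\partial X$ is a compact metric space that is connected and, by hypothesis, locally connected, it is a Peano continuum and therefore path connected. Choose a path $\tau:[0,1]\to\partial X$ with $\tau(0)=[\hat r]$ and $\tau(1)=[\hat s]$, and for each $t$ pick a geodesic ray $\tau_t$ at $\ast$ representing $\tau(t)$, with $\tau_0=\hat r$ and $\tau_1=\hat s$. Invoking Remark \ref{Follow}, I would subdivide $[0,1]$ by points $0=t_0<t_1<\cdots<t_m=1$ fine enough that on each subinterval the diameter of $\tau$ is so small that all the rays $\tau_t$, $t\in[t_{j-1},t_j]$, fellow travel $\tau_{t_{j-1}}$ up to a chosen time $K_j$ within $\delta$. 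Lemma \ref{SmallH}, applied to the subpath $\tau|_{[t_{j-1},t_j]}$, then produces a short edge path $\beta_j$ from $\tau_{t_{j-1}}(K_j)$ to $\tau_{t_j}(K_j)$ together with a proper homotopy rel $\{\tau_{t_{j-1}}(K_j)\}$, lying outside $B(\ast,K_j-N_1(\delta))$, that carries $\tau_{t_{j-1}}|_{[K_j,\infty)}$ to $(\beta_j,\tau_{t_j}|_{[K_j,\infty)})$.

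Finally I would chain these $m$ tail homotopies into one. For each $j$ the complementary compact arcs $\tau_{t_{j-1}}|_{[0,K_j]}$, $\beta_j$, and $\tau_{t_j}|_{[0,K_j]}$ close up to an edge loop at $\ast$, which bounds a disk in a bounded ball because $X$ is simply connected (Lemma \ref{Trans}); splicing this disk onto the tail homotopy gives a proper homotopy $\tau_{t_{j-1}}\simeq\tau_{t_j}$ rel $\{\ast\}$. Composing over $j=1,\dots,m$ yields a proper homotopy $\hat r=\tau_{t_0}\simeq\tau_{t_m}=\hat s$ rel $\{\ast\}$, which on undoing the reduction of the first paragraph gives the desired proper homotopy from $r$ to $(\beta,s)$ rel $\{r(0)\}$. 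I expect the main obstacle to be this last splicing step: one must verify that joining finitely many proper tail homotopies to the compact front disks yields a single map that is still proper, i.e. whose preimages of compact sets remain compact, and that the composite genuinely fixes the basepoint throughout rather than merely agreeing on it segment by segment.
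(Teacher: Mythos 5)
Your proof is correct, and its skeleton matches the paper's: reduce to geodesic rays based at $\ast$ (via a ladder of bounded quadrilaterals killed by Lemma \ref{Trans}), join the two boundary points by a path $\tau$ in $\partial X$ (both you and the paper need $\partial X$ connected and locally connected, hence a path-connected Peano continuum), and finish with Lemma \ref{SmallH}. The one genuine difference is how that lemma is invoked. The paper applies it exactly once, in the degenerate case $K=0$: there the fellow-traveling hypothesis $d(\tau_t(0),r(0))\le\delta$ is vacuous because every ray starts at $\ast$, and the excluded ball $B(\ast,-N_1(\delta))$ is empty, so a single application to the whole path $\tau$ (of arbitrary diameter) already yields $\hat r\simeq\hat s$ rel $\{\ast\}$; the subdivision of $\tau$ into small-diameter pieces happens inside the proof of Lemma \ref{SmallH}, not in the corollary. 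You instead keep $K>0$, which forces you to re-subdivide $\tau$ at the corollary level, apply the lemma once per subinterval, and splice the resulting tail homotopies together with compact disks coming from simple connectivity of $X$. This duplicates work already done internally in the lemma's proof, but it is perfectly sound --- and it has the small virtue of using Lemma \ref{SmallH} strictly within its stated hypotheses ($K>0$ an integer), whereas the paper's $K=0$ application requires observing that the lemma's proof goes through verbatim in that degenerate case. Finally, the obstacle you flag at the end is not a real one: you splice only finitely many homotopies, each proper and each fixing $\ast$ throughout (the tail homotopies extended stationarily over the initial segments, and the compact disk homotopies extended stationarily over the tails), and a finite concatenation of proper homotopies rel $\{\ast\}$ is again a proper homotopy rel $\{\ast\}$, since the preimage of a compact set is a finite union of compact sets.
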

\begin{proof}
Let $r_1$ and $s_1$ be geodesic edge path rays at $\ast$ such that $r\in [r_1]$ and $s\in [s_1]$. Let $\gamma_r$ and $\gamma_s$ be arbitrary edge paths from $r(0)$ to $\ast$ and $s(0)$ to $\ast$ respectively. Certainly $r$ is properly homotopic to $(\gamma_r,r_1)$ rel$\{r(0)\}$ and $s$ is properly homotopic to $(\gamma_s,s_1)$ rel$\{s(0)\}$. By Lemma \ref{SmallH}, $r_1$ is properly homotopic to $s_1$ rel$\{\ast\}$. (Apply Lemma \ref{SmallH} with $K=0$ and $\tau$ any path in $\partial X$ from $[r_1]$ to $[s_1]$. Also note  that $B(\ast, -N_1(\delta))=\emptyset$.) Since $X$ is simply connected, the loop  $(\beta,\gamma_s, \gamma_r^{-1})$ is homotopically trivial. Simply combine the four homotopies. 
\end{proof}  

\begin{lemma}\label{HoroSC}
For any group $P$ with finite generating set $S$, the horoball for $(P,S)$ is simply connected at $\infty$. 
\end{lemma}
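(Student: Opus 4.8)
The plan is to argue combinatorially inside the horoball and to exploit the logarithmic distortion of its metric. Write $\mathcal H=\mathcal H(\Gamma)$ with $\Gamma=\Gamma(P,S)$, fix the base vertex $o=(e,0)$ (with $e$ the identity of $P$), and measure $d$ in $\mathcal H^{(1)}$. Since $\mathcal H$ is locally finite, closed balls $B(o,R)$ are compact, and after a cellular approximation (which moves points only a bounded amount, absorbed into the constants below) it suffices to contract edge-path loops. Given a compact $C$, fix $R_C$ with $C\subseteq B(o,R_C)$. I would take the promised compact set to be $D=B(o,R_D)$ with $R_D=2R_C+C_0$ for a universal constant $C_0$, and show that every edge loop in $\mathcal H-D$ contracts inside $\mathcal H-B(o,R_C)\subseteq \mathcal H-C$. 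The factor $2$ here is the whole point.

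First I would record the metric estimate that drives the argument. For a vertex $x=(v,k)$ put $m_v=\lceil\log_2 d_\Gamma(v,e)\rceil$ (and $m_v=0$ if $v=e$). Using the explicit ``ascend--cross--descend'' geodesics of Lemma \ref{GM3.10} — ascend from $(v,k)$ to level $\max(k,m_v)$, cross by a horizontal segment of length $\le 3$, then descend to $o$ — one obtains
\[
\max(k,m_v)-C_0 \;\le\; d\bigl((v,k),o\bigr) \;\le\; 2\max(k,m_v)+C_0 .
\]
The lower bound comes from the depth function ($d(x,o)\ge k$) together with the horizontal contribution ($d(x,o)\ge m_v-C_0$); the upper bound is the length of the displayed geodesic, where the factor $2$ appears precisely when $(v,k)$ is shallow but horizontally far. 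In particular, a loop vertex with $d((v,k),o)>R_D$ must satisfy $\max(k,m_v)>(R_D-C_0)/2$.

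Next I would build the null-homotopy by pushing the loop \emph{upward}, exactly as in the horoball case of Lemma \ref{Trans}: using vertical squares, slide the loop up to a single level (its maximal depth) to get a loop $\tau_0$; then, using vertical pentagons and squares, replace $\tau_0$ by a loop $\tau_1$ one level deeper with $|\tau_1|\le |\tau_0|/2+1$, and iterate until the loop is trivial (this terminates after about $\log_2|\tau_0|$ steps, regardless of the original length). The essential feature of these moves is that they only \emph{increase} the depth coordinate and never enlarge the $\Gamma$-footprint of the loop: a vertical square carries $(v,k)$--$(w,k)$ to $(v,k+1)$--$(w,k+1)$, and a vertical pentagon absorbs a midpoint while keeping the $\Gamma$-labels of its corners. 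Hence every vertex occurring in the homotopy has the form $(v,k')$ with $k'\ge k$ for some original loop vertex $(v,k)$, with $v$ unchanged.

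It remains to check the homotopy avoids $B(o,R_C)$. For an original loop vertex $(v,k)$ we have $\max(k,m_v)>(R_D-C_0)/2$ by the estimate above. Any vertex $(v,k')$ of the homotopy has $k'\ge k$ and the same $m_v$, so
$d((v,k'),o)\ge \max(k',m_v)-C_0\ge \max(k,m_v)-C_0>(R_D-C_0)/2-C_0>R_C$
once $R_D>2R_C+3C_0$; padding $C_0$ slightly controls the interior points of the $2$-cells of the homotopy, which lie within bounded distance of such vertices. Thus the whole contraction stays in $\mathcal H-B(o,R_C)\subseteq\mathcal H-C$. I expect the main obstacle to be the \emph{horizontally far} case — a short loop sitting deep in $\Gamma$ at depth $0$ is far from $o$, yet pushing it up first moves it \emph{closer} to $o$ (its distance drops from about $2m_v$ to about $m_v$ before rising again). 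The resolution, and the reason the factor $2$ between $R_D$ and $R_C$ is forced rather than a bounded additive gap, is exactly that this closest approach is still of order $m_v>(R_D-C_0)/2>R_C$.
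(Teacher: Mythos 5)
Your proof is correct, and the null-homotopy at its core --- sliding the loop up to a single level with vertical squares, then repeatedly halving it with vertical pentagons until a horizontal triangle fills it --- is exactly the one the paper uses (it is the horoball argument of Lemma \ref{Trans}). Where you genuinely diverge is in the choice of the compact set $D$ and in how avoidance of $C$ is established. The paper takes $D$ to be the full subcomplex consisting of $C$ together with all vertices vertically below $C$ (its shadow down to level $0$); with that choice the complement of $D$ is upward-closed along vertical lines, so the facts that the initial slide stays on the vertical lines through the loop's vertices and that all later moves take place above the top level of $D$ make the avoidance of $D\supseteq C$ a one-line observation, with no metric estimates whatsoever. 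You instead take $D$ to be a ball of roughly twice the radius of a ball containing $C$, and you pay for that with the two-sided distortion estimate $\max(k,m_v)-C_0\le d((v,k),o)\le 2\max(k,m_v)+C_0$ plus the bookkeeping that the push-up preserves $\Gamma$-labels and only increases depth; your diagnosis of why the factor $2$ is forced in this formulation (a depth-$0$ vertex that is horizontally far has its distance to $o$ roughly halved as it rises to level $m_v$ before increasing again) is accurate, and it is precisely the subtlety that the paper's shadow construction sidesteps. The trade-off: the paper's argument is shorter and purely combinatorial, with $D$ adapted to the shape of $C$; yours yields uniform metric control --- $D$ depends only on the radius of a ball containing $C$, and linearly at that --- which is strictly more quantitative information and could be reused where such uniformity matters, at the cost of the distance computations. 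Both arguments establish Lemma \ref{HoroSC}.
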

\begin{proof}
Let $\mathcal H$ be the horoball for $(P,S)$ and $C$ a compact subcomplex of $\mathcal H$. Let $D$ be the full subcomplex containing $C$ and all vertices ``between $C$ and level-0" (so if $v$ is a vertex of $C$ and $l_v$ is the vertical line thru $v$, then $D$ contains all vertices of $l_v$ from level 0, to the level of $v$). So if $v$ is a vertex of $\mathcal H-D$, then the vertical line at $v$ avoids $D$. Let $\alpha$ be an edge path loop in $\mathcal H-D$. Then using vertical squares, $\alpha$ can be ``slid"  directly up to an edge path $\alpha_1$ in a single level above the top level of $D$, by a homotopy avoiding $D$. Write $\alpha_1$ as the edge path loop $(e_1,e_2,\ldots, e_n)$. Using vertical pentagons (and perhaps one vertical square), each of the pairs of edges $(e_1, e_2), (e_3,e_4),\ldots$ can be slid up to edges $(d_1,\ldots, d_m)$ respectively, where $m\leq [{n\over 2}]+1$. Continuing this process, one ends up with an edge path loop of length $3$, which is homotopically trivial in that level. Combining homotopies, $\alpha$ is homotopically trivial in $\mathcal H-C$ (actually in $\mathcal H-D$).
\end{proof}

\begin{lemma}\label{HoroRay} 
Suppose $r$ is a proper edge path ray at $\ast\in Y$, with image in $X$ such that no tail of $r$ has image in a horoball, then $r$ is properly homotopic to an edge path ray in $Y$. 
\end{lemma}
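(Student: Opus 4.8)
The plan is to cut $r$ into its level-$0$ pieces and its horoball excursions, leave the first alone, and reroute the second inside $Y$. Recall that $\mathcal D^{-1}(0)=Y$ and that two distinct horoballs meet only along level $0$; hence $X\smallsetminus Y$ is the disjoint union of the (open) horoballs. If some tail of $r$ missed $Y$ it would lie in one component of $X\smallsetminus Y$, i.e. in a single horoball, contrary to hypothesis; so $r$ meets $Y$ in an unbounded set of parameters, and by properness these escape every ball $B(\ast,R)$. This lets me write $r=(\alpha_1,\beta_1,\alpha_2,\beta_2,\dots)$, where each $\alpha_j$ is a (possibly trivial) maximal edge path in $Y$ and each $\beta_j$ is a single excursion whose interior lies in one horoball $H_j$ and whose endpoints $u_j,u_j'$ lie in $Y$; every piece leaves each $B(\ast,R)$ once $j$ is large.

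The new ray will be $r'=(\alpha_1,\beta_1',\alpha_2,\beta_2',\dots)\subset Y$, where $\beta_j'$ replaces the excursion $\beta_j$ by a path with the same endpoints lying in the peripheral coset $C_j$ carrying $H_j$ (so $C_j\subset Y$). Since $X$ is locally finite, $B(\ast,R)\cap C_j$ is finite for every $R$; because $C_j$ is quasi-isometric to a $1$-ended peripheral subgroup, the complement in $C_j$ of any finite set has a single unbounded component. As $u_j$ and $u_j'$ lie far out along $r$, for a suitable $R_j\to\infty$ I can therefore join them inside $C_j$ by a path $\beta_j'$ avoiding $B(\ast,R_j)$. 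Then $r'$ is a proper edge path ray in $Y$.

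It remains to produce a proper homotopy from $r$ to $r'$. On each $\alpha_j$ it is constant; on the $j$th block it is a null-homotopy of the loop $\ell_j=(\beta_j,(\beta_j')^{-1})\subset H_j$. Since $\ell_j$ lies at distance $\gtrsim R_j$ from $\ast$ with $R_j\to\infty$, Lemma \ref{HoroSC} (each horoball is simply connected at $\infty$) provides a filling disc $\Delta_j$ obtained by contracting $\ell_j$ upward inside $H_j$, and hence lying at distance tending to $\infty$ from $\ast$. Patching the $\Delta_j$ with the constant strips over the $\alpha_j$ yields a homotopy $H$ with $H(t,0)=r$ and $H(t,1)=r'$; as each $B(\ast,R)$ meets only finitely many $\Delta_j$ and $\alpha_j$, $H$ is proper.

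The crux is exactly the properness of this homotopy, and it is here that the hypotheses are used. The tempting move --- push each excursion straight down to its level-$0$ shadow --- fails: an excursion may stay far from $\ast$ while climbing high and ``passing over'' the point of $C_j$ nearest $\ast$, so that its vertical shadow sweeps a long segment of $C_j$ back toward $\ast$, destroying properness of the candidate ray. The two ingredients that repair this are (i) the $1$-endedness of the peripheral subgroups, used to reroute $\beta_j$ \emph{within} $C_j$ while keeping it $X$-far from $\ast$ (local finiteness of $X$ makes ``far in the coset'' equivalent to ``far in $X$''), and (ii) Lemma \ref{HoroSC}, used to contract the correction loops high in the horoball rather than by pushing them down. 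I expect arranging that every stage of the homotopy recedes to distance $\infty$ from $\ast$ to be the sole technical point; the combinatorial decomposition of $r$ and the patching are routine.
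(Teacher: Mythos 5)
Your proposal is correct and takes essentially the same approach as the paper's proof: decompose the ray into $Y$-segments and horoball excursions, reroute each excursion inside the level-$0$ peripheral coset using $1$-endedness of the peripheral subgroups to keep the replacement far from $\ast$, fill the resulting correction loops via Lemma \ref{HoroSC}, and obtain properness of the patched homotopy from local finiteness (a compact set meets only finitely many horoballs). The only organizational difference is that the paper first disposes of horoballs met in only finitely many edges using ordinary simple connectivity of the horoball, reserving the simple-connectivity-at-infinity argument (with a carefully chosen compact exhaustion playing the role of your $R_j$) for horoballs met infinitely often, whereas you treat all excursions uniformly.
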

\begin{proof}
Suppose $\mathcal H$ is a horoball. If $r$ has only finitely many edges in $\mathcal H$, then let $\alpha_1,\ldots, \alpha_n$ be the maximal  subpaths of $r$ that begin and end in $Y$ and have image in $\mathcal H$. Let $\beta_i$ be an edge path in $Y\cap \mathcal H$ with the same initial and end point as $\alpha_i$. Since $ \mathcal H$ is simply connected, $\alpha_i$ and $\beta_i$ are homotopic in $ \mathcal H$, relative to their common end points. Let $s$ be the proper  edge path ray obtained from $r$ by replacing the $\alpha_i$ by the $\beta_i$ for {\it every} horoball $ \mathcal H$ such that $r$ meets $ \mathcal H$ in only finitely many edges. There is an obvious homotopy $K$ from $r$ to $s$ and $K$ is proper since a compact set  in $X$ can only intersect finitely many horoballs.

Let $\Gamma_i$ be the Cayley graph of $P_i$ with respect to the generating set used here. Suppose $ \mathcal H$ is a horoball and $r$ has infinitely many edges in $ \mathcal H$. Let $\alpha_1, \alpha_2,\ldots$ be the maximal subpaths of $r$ that begin and end in $Y$ and have image in $\mathcal H$. Assume that the $\alpha_i$ are ordered as they appear as subpaths of $r$. Note that the $\alpha_i$ are also ordered subpaths of $s$. Let $C_0\subset C_1\subset\cdots$ be a collection of compact subsets of $X$ such that $C_i$ is a subset of the interior of $C_{i+1}$ and $\cup_{i=1}^\infty C_i=X$. Since each $P_i$ is 1-ended, we may  assume  that if $\mathcal H$ is the  horoball corresponding to the coset $gP_i$ (so that $\mathcal H\cap Y=g\Gamma_i$) and $\mathcal H\cap C_j\ne\emptyset$ then $C_{j+1}$ contains all bounded components of  $g\Gamma_i-C_j$. By Lemma \ref{HoroSC}, $\mathcal H$ is simply connected at infinity and so we may assume that if $\gamma$ is a loop in $\mathcal H-C_{j+1}$ then $\gamma$ is homotopically trivial in $\mathcal H-C_j$. For convenience let $C_i=\emptyset$ for $i\leq 0$. For each $k\geq 1$, let $j(k)\geq 0$ be the largest integer such that $\alpha_k$ has image in $\mathcal H-C_{j(k)}$. Then there is an edge path $\beta_k$ in $g\Gamma_i-C_{j(k)-1}$ with the same initial and end point as $\alpha_k$. Since $\mathcal H$ is simply connected at infinity, $\alpha_k$ and $\beta_k$ are homotopic relative to their endpoints by a homotopy in $\mathcal H-C_{j(k)-2}$. 

Again, any compact set $C$ intersects only finitely many horoballs. Given any horoball $\mathcal H$, only finitely many of the homotopies of the $\alpha_k$ to the $\beta_k$ intersect $C$. Combining homotopies, $s$ (and hence $r$) is properly homotopic to a proper  edge path ray in $Y$. 
\end{proof}

\begin{proof} (of Theorem \ref{GM-SS}) By Corollary \ref{Geo} it is enough to show that each proper edge path ray based at $\ast$ in $X$ is properly homotopic rel$\{\ast\}$, to a geodesic  edge path ray at $\ast$. Suppose $r$ is a proper edge path ray at $\ast$ in $X$ with tail in the horoball $\mathcal H$. Let $z$ be a closest point of $\mathcal H(\delta)$ (the points of $\mathcal H$ in level $\delta$) to $\ast$ and $\alpha$ a geodesic edge path from $\ast$ to $z$. Let $s$ be the vertical geodesic  edge path ray in $\mathcal H$ beginning at $z$. By Lemma \ref{geo}, $(\alpha, s)$ is a geodesic  edge path ray. Let $v$ be the first vertex of $r$ such that each vertex following $v$ belongs to $\mathcal H$. Let $q$ be the vertical geodesic edge path ray at $v$. By pushing horizontal edges up along vertical squares, $q$ and the tail of $r$ at $v$ are properly homotopic rel$\{v\}$. Let $\beta$ be the initial segment of $r$ from $\ast$ to $v$. By Corollary  \ref{Geo}, the  edge path rays $(\alpha, s)$ and $(\beta, q)$ are properly homotopic rel$\{\ast\}$. But then $r$ is properly homotopic to the geodesic  edge path ray $(\alpha,s)$ rel$\{\ast\}$.

\vspace {.5in}
\vbox to 2in{\vspace {-2in} \hspace {-1.3in}
\hspace{-.6 in}
\includegraphics[scale=1]{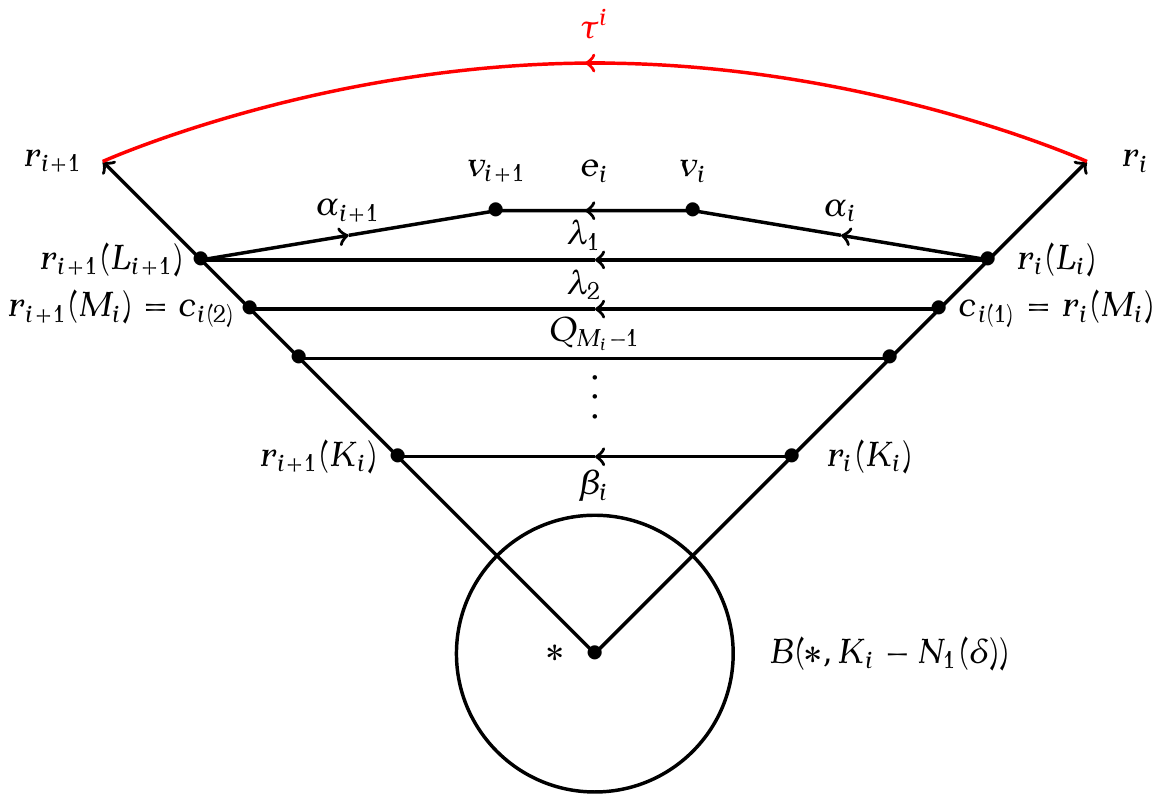}
\vss }

\vspace{.9 in}

\centerline{Figure 3}

\medskip

By Lemma \ref{HoroRay}, we now only need show that  a general proper edge path ray at $\ast$ and with image in $Y$  is properly homotopic rel$\{\ast \}$ to a geodesic  edge path ray at $\ast$ (Corollary \ref{Geo}). 
Suppose $r$ is a proper edge path ray in $Y$ that is based at $\ast$. List the consecutive edges of $r$ as $e_0,e_1,\ldots$ and consecutive vertices as $\ast=v_0, v_1,\ldots$. Let $r_0$ be a geodesic  edge path ray beginning at $v_0$ and for each $i\geq 1$, let $r_i$ be a geodesic edge path ray in $X$ such that $r_i(0)=\ast$ and for some $L_i\in [0,\infty)$, $d(r_i(L_i), v_i)\leq \delta$ (see Lemma \ref{AlmostExt}). Let $\alpha_i$ be an edge path of length $\leq \delta$ from $r_i(L_i)$ to $v_i$ (see Figure 3).

Since $r$ is proper, $\lim_{i\to \infty} \{L_i\}=\infty$. Let $\lambda_1$ be a geodesic from $r_i(L_i)$ to $r_{i+1}(L_{i+1})$. Then $|\lambda_1|\leq 2\delta+1$.
Let $T_i$ be the geodesic triangle with sides $r_i([0,L_i])$, $\lambda_1$ and $r_{i+1}([0,L_{i+1}])$. Let $c_{i(1)}$ and $c_{i(2)}$ be the internal points of $T_i$ on $r_i$ and $r_{i+1}$ respectively. By the definition of internal points: 
$$d(c_{i(1)}, r_i(L_i))+d(c_{i(2)}, r_{i+1}(L_{i+1}))=|\lambda_1|\leq 2\delta+1$$ 
Also, $d(c_{i(1)},c_{i(2)})\leq \delta$. The definition of internal points implies that  if $c_{i(1)}=r_i(M_i)$, then $c_{i(2)}=r_{i+1}(M_i)$. So $L_i-M_i=d(c_{i(1)}, r_i(L_i))\leq 2\delta+1$ (and $L_{i+1}-M_i\leq 2\delta+1$). In particular $\lim_{i\to\infty}\{M_i\}=\infty$. Let $\lambda_2$ be a geodesic (of length $\leq \delta$) from $c_{i(1)}=r_i(M_i)$ to $c_{i(2)}=r_{i+1}(M_i)$. We have:
 $$|(\lambda_2, r_{i+1}|_{[M_i,L_{i+1}]}, (\alpha_{i+1}, e^{-1}_i,\alpha_{i}^{-1}), (r_i|_{[M_i,L_i]})^{-1})|\leq 7\delta +3$$ 
So there is a null homotopy $H_i$ for this loop in $X-B(\ast, M_i-N_1(\delta))$ (see Figures 3 and 4(a)).

\vspace {.5in}
\vbox to 2in{\vspace {-2in} \hspace {-2.3in}
\hspace{.1 in}
\includegraphics[scale=1]{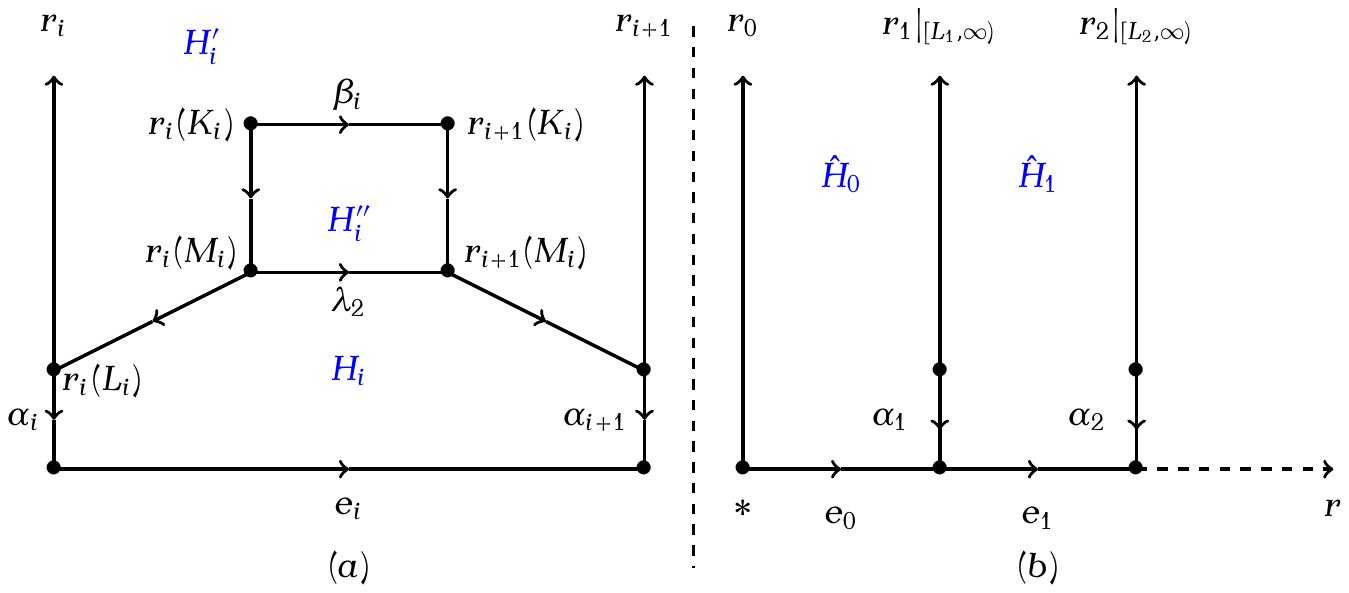}
\vss }

\vspace{-.1 in}

\centerline{Figure 4}

\medskip

For large $i$,  $r_i$ and $r_{i+1}$ fellow travel for a long time. By the local connectivity of $\partial X$, there is a path $\tau^i$ in $\partial X$ of diameter $\epsilon _i$ connecting $[r_i]$ and $[r_{i+1}]$ where $\lim_{i\to \infty} \epsilon_i= 0$. Following the notation of  Lemma \ref{SmallH}, let $\tau^i_t\in [\tau^i(t)]$. Then we may choose $K_i\leq M_i$ such that for all $t\in [0,1]$, $d(\tau^i_t(K_i), r_i(K_i))\leq \delta$, and $\lim_{i\to\infty} \{K_i\}= \infty$. Let $\beta_i$ be a geodesic edge path (of length $\leq \delta$) from $r_i(K_i)$ to $r_{i+1}(K_i)$. 
By Lemma \ref{SmallH}, $r_{i}|_{[K_i,\infty)}$ is properly homotopic $rel\{r_{i}(K_i)\}$ to $(\beta_i, r_{i+1}|_{[K_i,\infty)})$ by a homotopy $H_i'$ with image in $X-B(\ast, K_i-N_1(\delta))$. 

Just as with Lemma \ref{SmallH} the geodesic quadrilateral $R_i$ with sides $\lambda_2$, $\beta_i$, $r_i|_{[K_i,M_i]}$ and $r_{i+1}|_{[K_i,M_i]}$ can be subdivided by geodesic quadrilaterals $Q_j$, where two opposite sides of $Q_i$ are corresponding edges of $r_i$ and $r_{i+1}$ ($r_i([k,k+1])$ and $r_{i+1}([k,k+1])$ for some integer $k$) and the other two sides are geodesics of length $\leq \delta$ (see Figures 2 and 3). 
Since each $Q_i$ has boundary path of length $\leq 2\delta+2$, it is homotopically trivial in $X-B(\ast, K_i-N_1(\delta))$, and so $R_i$ is null homotopic by a homotopy $H_i''$ with image in $X-B(\ast, K_i-N_1(\delta))$. Combining the homotopies $H_i$, $H_i'$ and $H_i''$, (as in Figure 4(a)) $r_i|_{[L_i,\infty)}$ is properly homotopic to $(\alpha_i, e_i, \alpha_{i+1}^{-1},r_{i+1}|_{[L_{i+1},\infty)})$ by a homotopy $\hat H_i$ with image in $X-B(\ast, K_i-N_1(\delta))$. 

Since $r_0$ begins at $\ast$, we may assume that $\alpha_0$ is trivial. Patching together the homotopies $\hat H_i$ (for $i\in \{0,1,\ldots\}$) we have a homotopy $\hat H$ of $r_0$ to $r$ (see Figure 4 (b)). For any compact set $C$, only finitely may of the $\hat H_i$ have image that intersect $C$ and so $\hat H$ is proper.
\end{proof}

\section{Proper Relative Simplicial Approximation (SimpApp)}\label{SimpApp} 

In this section $X$ is a simplicial complex and $Y$ will be a certain subcomplex. In our applications of the main result of this section, again $X$ will be the cusped space for $(G,\mathcal P)$ and $Y$ will be a Cayley 2-complex for $G$. 

The spaces $[0,\infty)\times [0,\infty)$ and $[0,\infty) \times [0,1]$ are homeomorphic. The proper maps $r,s:([0,\infty),\{0\})\to (X,\ast)$ are properly homotopic $rel\{\ast\}$ means there is a proper homotopy $H:[0,\infty) \times [0,1]\to X$ such that $H(0,t)=r(t)$, $H(1,t)=s(t)$ and $H(0,t)=\ast$ for all $t$. This is equivalent to having a proper homotopy  $H':[0,\infty)\times [0,\infty)\to X$ such that $H'(t,0)=r(t)$ and $H'(0,t)=s(t)$ for all $t$. In this section we use the space $[0,\infty)\times [0,1]$ for technical reasons.

All spaces are simplicial complexes, and all subcomplexes are full subcomplexes of the over complex. If $X$ is a simplicial complex, then we say a subcomplex $Z$ {\it separates} a vertex $v\in X-Z$ from a subcomplex $Y$ if any edge path in $X$ from $v$ to a vertex of $Y$ contains a vertex of $Z$. 

Our primary models for the main theorem of this section is when the space $X$ is the Cayley 2-complex for a group split non-trivially as $G=A\ast_CB$ where $A$ and $B$ are finitely presented and $C$ is finitely generated, or $X$ is a cusped space for a finitely generated group  which is hyperbolic relative to a finite collection of proper finitely generated subgroups. In this article we are only interested in the cusped space version, but our result will have important uses elsewhere. 

We are interested in proper homotopies 
$$M:[0,\infty)\times [0,1]\to  X$$
 of proper edge path rays  $r$ and $s$ into a connected locally finite simplicial 2-complex $X$, where $r$ and $s$ have image in a subcomplex $Y$ of $X$. Simplicial approximation allows us to assume that $M$ is simplicial (see Lemma \ref{simpA}). In the case $G$ is the amalgamated product $A\ast_CB$, the space $X$ is the Cayley 2-complex for $G$, and the space $Y$ is the Cayley subcomplex of $X$ for $A$. When $G$ is hyperbolic relative to the finite set $\mathcal P$ of proper finitely generated subgroups of $G$, the space $X$ is the cusped space for $(X,\mathcal P)$ and $Y$ is the Cayley 2-complex for $G$.  Say $\{Z_i\}_{i=1}^\infty$ is a collection of connected subcomplexes of $Y$ such that only finitely many $Z_i$ intersect any compact subset of $X$, and each vertex of $X-Y$ is separated from $Y$ by exactly one $Z_i$. In the $A\ast_CB$ setting, the $Z_i$ correspond to $aC$ cosets for each $a\in A$. In the relative hyperbolic setting the $Z_i$ correspond to $gP$ where $P\in \mathcal P$ and  $g\in G$. Assume that each vertex of the image of $M$ is either in $Y$ or is separated from $Y$ by some $Z_i$. 

The next result describes how to simplicially excise the interiors of certain subcomplexes of $[0,\infty)\times [0,1]$ which together contain all points that $M$ does not mapped into $Y$. What is removed from  $[0,\infty)\times [0,1]$ is a disjoint union of open sets $E_j$ for $j\in J$, each homeomorphic to $\mathbb R^2$ and $M([0,\infty)\times [0,1]-(\cup_{j\in J} E_j))\subset Y$. Also, $M$ maps the topological boundary of each $E_j$ into some $Z_i$.  When $E_j$ is bounded (contained in a compact set) in $[0,\infty)\times [0,1]$,  there is an embedded edge path loop $\alpha_j$ in $[0,\infty)\times [0,1]$ that bounds $E_j$, and $M(\alpha_j)$ has image in one of the $Z_i$. In this case the $E_j$ and $\alpha_j$ form a finite subcomplex of $[0,\infty)\times [0,1]$ homeomorphic a closed ball which contains a certain equivalence class of triangles that are mapped into $X-Y$. When $E_j$ is unbounded, $\alpha_j$ is an embedded proper edge path line that bounds $E_j$. Again $M(\alpha_j)$ has image in one of the $Z_i$ and $E_j$ and $\alpha_j$ form a subcomplex of $[0,\infty)\times [0,1]$ homeomorphic to the closed upper half plane.  Again, in this case $E_j$ will contain a certain equivalence class of triangles, each of which is mapped into $X-Y$. For our purposes in this article, we intend to replace each $E_j$ by a 2-manifold with boundary a circle or real line and define a map of the 2-manifold into $Y$ so that the new map on the boundary of the 2-manifold is basically $\alpha_j$. Attaching these 2-manifolds to  $[0,\infty)\times [0,1]-(\cup_{j\in J} E_j)$ along the boundaries of the $E_i$ will produce a 2-manifold that will show $r$ and $s$ are properly homologous. 
 
\begin{definition}
We call the pair $(E,\beta)$ a {\it disk pair} in the simplicial complex $[0,\infty)\times [0,1]$ if $E$ is an open subset of $[0,\infty)\times [0,1]$ homeomorphic to $\mathbb R^2$, $E$ is a union of (open) cells, $\alpha$ is an embedded edge path bounding $E$ and $E$ union $\alpha$ is a closed subspace of $[0,\infty)\times [0,1]$ homeomorphic to a closed ball or a closed half space in $[0,\infty)\times [0,1]$. When $\alpha$ is finite, we say the disk pair is finite, otherwise we say it is unbounded. Note that if $(E,\beta)$ is a disk pair, then $\beta$ is collared in $E$.
\end{definition}

\begin{theorem} \label{excise} 
Suppose $M:([0,\infty)\times [0,1],\{0\}\times [0,1])\to  (X,\ast)$ is a proper simplicial homotopy $rel\{\ast\}$ of proper edge path rays  $r$ and $s$ into a connected 
locally finite simplicial 2-complex $X$, where $r$ and $s$ have image in a subcomplex $Y$ of $X$. Say $\mathcal Z=\{Z_i\}_{i=1}^\infty$ is a collection of connected subcomplexes of $Y$ such that only finitely many $Z_i$ intersect any compact subset of $X$. Assume that each vertex of $X-Y$ is separated from $Y$ by exactly one $Z_i$. 

Then there is an index set $J$ and for each $j\in J$, there is a disk pair $(E_j,\alpha_j)$ in  $[0,\infty)\times [0,1]$ where the $E_j$ are disjoint, $M$ maps $\alpha_j$  to $Z_{i(j)}$ (for some $i(j)\in \{1,2,\ldots\}$) and $M([0,\infty)\times [0,1]-\cup_{j\in J} E_j)\subset Y$. 
\end{theorem}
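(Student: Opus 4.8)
The plan is to read the decomposition off the simplicial structure of $M$, using the separation hypothesis to control the frontier. Since $M$ is simplicial and every subcomplex in sight is full, a closed simplex $\sigma$ of $D:=[0,\infty)\times[0,1]$ satisfies $M(\sigma)\subset Y$ if and only if every vertex of $\sigma$ maps into $Y$. So call a vertex $v$ of $D$ \emph{good} if $M(v)\in Y$ and \emph{bad} otherwise, and for a bad vertex $v$ write $i(v)$ for the unique index with $M(v)$ separated from $Y$ by $Z_{i(v)}$. Because $r$, $s$ and the side $\{0\}\times[0,1]$ all map into $Y$, every vertex of $\partial D$ is good, so the full subcomplex $D_{\mathrm{bad}}$ spanned by the bad vertices lies in the interior of $D$.

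First I would establish the key local fact: if $u$ is bad and $w$ is a neighbouring vertex (so $M(u)M(w)$ is an edge or a point of $X$), then every edge path from $M(w)$ to $Y$ meets $Z_{i(u)}$. Indeed, prepending $M(u)M(w)$ to such a path gives a path from the bad vertex $M(u)$ to $Y$, which must cross $Z_{i(u)}$; as $M(u)\notin Z_{i(u)}$ (the $Z_i$ lie in $Y$), the crossing happens after $M(u)$. Consequently, if $w$ is also bad then $M(w)\notin Z_{i(u)}$ forces $Z_{i(u)}$ to separate $w$ from $Y$, so $i(w)=i(u)$ by uniqueness; hence $i$ is \textbf{constant on each connected component} of $D_{\mathrm{bad}}$. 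And if $w$ is good then $M(w)\in Z_{i(u)}$. Fullness of $Z_{i(u)}$ upgrades this to: the full subcomplex of $D$ spanned by the good vertices adjacent to a fixed component $C$ of $D_{\mathrm{bad}}$ is carried by $M$ into $Z_{i(C)}$.

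Next, for each component $C$ of $D_{\mathrm{bad}}$ I would take a regular neighbourhood $N(C)$ in the surface $D$. Its frontier lies in the good subcomplex adjacent to $C$, hence is mapped into $Z_{i(C)}$, and (since $C$ is interior and connected) it is a disjoint union of embedded edge circles, together with an embedded proper edge line when $C$ is unbounded. Using that $D$ is planar (a closed half-plane), I would pass from $N(C)$ to the \emph{filled} region $E_C$ obtained by adjoining every bounded complementary component of $N(C)$; then $E_C$ is an open disk (respectively a half-plane, when $C$ is unbounded) whose single boundary curve $\alpha_C$ is the outer frontier circle (respectively line), so $(E_C,\alpha_C)$ is a disk pair with $M(\alpha_C)\subset Z_{i(C)}$.

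Finally I would organize the $E_C$ by planar nesting: any two filled regions are either disjoint or one contains the other, so letting $J$ index the nesting-maximal (``outermost'') components yields a \emph{disjoint} family $\{(E_j,\alpha_j)\}_{j\in J}$ of disk pairs with $M(\alpha_j)\subset Z_{i(j)}$. Every bad simplex lies in some $E_j$ (its own $E_C$, or the outermost region enclosing it), so $M\big(D-\bigcup_{j} E_j\big)\subset Y$. Properness follows from the standing hypotheses: $M$ proper together with the local finiteness of $\mathcal Z$ forces each compact subset of $D$ to meet only finitely many components $C$, which makes $\{E_j\}$ locally finite, distinguishes bounded from unbounded pairs, and guarantees the unbounded $\alpha_j$ are proper lines. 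I expect the \textbf{main obstacle} to be exactly this last planar bookkeeping — checking that each filled neighbourhood is honestly a disk or half-plane with a \emph{single} embedded boundary curve, that restricting to outermost components keeps the regions disjoint while still covering all bad simplices, and that the unbounded pairs are proper — rather than the separation argument, which is immediate from the hypotheses.
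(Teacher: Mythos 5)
Your outline follows the same basic strategy as the paper's proof: your components of $D_{\mathrm{bad}}$, with the index $i$ constant on each component, correspond to the paper's equivalence classes $T_i(\triangle)$ of red triangles; your separation argument is correct and is exactly the paper's coloring observations; and your final ``outermost region'' selection is the paper's last step. The genuine gap is at the decisive middle step: producing, for each component $C$, an \emph{embedded edge path} circle or proper line that bounds a disk or half-plane containing $C$ and that $M$ maps into $Z_{i(C)}$. Note that a disk pair requires its boundary to be an edge path, and your neighbourhood frontier fails this no matter how you set it up. If $N(C)$ is an honest regular (derived) neighbourhood, its frontier is an embedded $1$-manifold but it is not in the $1$-skeleton and is \emph{not} mapped into $Z_{i(C)}$: the frontier runs through the interiors of triangles having a vertex in $C$, and on such a triangle $M$ is affine with at least one vertex mapped outside $Y$, so (since $Y$ is full) the frontier points there are mapped outside $Y$ entirely. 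If instead $N(C)$ is the simplicial neighbourhood (union of closed simplices meeting $C$), then its frontier is a subcomplex mapped into $Z_{i(C)}$ by your local fact plus fullness, but it need not be an embedded circle or line: a good vertex $w$ can have neighbours in $C$ lying in two different arcs of the link of $w$ separated by good vertices, and then $w$ is a pinch vertex of degree at least $4$ in the frontier. You explicitly defer this point (``the main obstacle''), but it is not bookkeeping --- re-routing the boundary at pinch vertices, and showing for unbounded $C$ that the re-routing stabilizes so the limit is a proper embedded line, is precisely the hard content of the paper's Lemma \ref{partition} (its Case 3 and the stabilization Claim). As written, the proposal leaves the theorem's central construction unproved.

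A secondary, fixable error: adjoining only the \emph{bounded} complementary components of $N(C)$ does not yield a disk pair in general. When $C$ is unbounded, the complement of $N(C)$ in $D=[0,\infty)\times[0,1]$ can have an unbounded component that does not contain $\partial D$ (for instance when $C$ contains two disjoint horizontal rays joined near their left ends), and your filled region then has two frontier lines rather than one. The correct prescription is to adjoin every complementary component except the one containing $\partial D$; a short planar separation argument then shows the frontier of that outer component is a single embedded circle or proper line. With that correction, and granting embedded edge-path boundaries, your non-crossing/nesting and properness remarks do go through and parallel the paper's Lemma \ref{disjoint1} and its final outermost-selection argument.
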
 

\begin{remark}\label{Tfacts} 
Assume the hypotheses of Theorem \ref{excise}. If $e$ is an edge of $[0,\infty)\times [0,1]$ then there are at most two $j\in J$   such that $e$ is an edge of $\alpha_j$. Hence if $K$ is a finite subcomplex of $[0,\infty)\times [0,1]$, there are only finitely many $j\in J$ such that $\alpha_j$ has an edge in $K$. 

If $Z_i\in \mathcal Z$ is a finite subcomplex of $X$,  and $M$ maps $\alpha_j$ to $Z_i$ then since $M$ is proper, $\alpha_j$ is a circle (and not a line), so that $E_j$ is bounded in $[0,\infty)\times [0,1]$. Also since $M$ is proper,  $M^{-1}(Z_i)$ is compact, and so $M$ maps only finitely many $\alpha_j$ to $Z_i$.  
\end{remark}

\begin{proof}
For each  $j\in \{1,2,\ldots\}$ we define a coloring $\mathcal C_j$ of $X$ using only three colors. Basically cells on one side of $Z_j$ are one color, cells on the other side of $Z_j$ are a different color and the cells of $Z_i$ are a third color. 

All vertices, open edges and open triangles of $Z_j$ are colored blue.  All vertices open edges and open triangles of $X-Y$ that are separated from $Y$ by $Z_j$ are colored red (so any edge path from a red vertex to a vertex of $Y$ contains a blue vertex). Each vertex, open edge and open triangle that remains is colored green.   A red triangle may be bounded by both red and blue edges. 
The following are elementary, but critical observations for each coloring $C_j$:

{\bf (1)} An edge $e$ is blue if and only if $e$ has two blue vertices and a triangle $\triangle$ is blue if and only if $\triangle$ has three blue vertices. 

{\bf (2)} An edge $e$ has a red vertex if and only if $e$ is red. Every red edge has either two red vertices or one red vertex and one blue vertex. A vertex (edge) of a red triangle is either red or blue. A triangle is red if and only if it has at least one red vertex (and so at least two red edges). If a triangle has only one red vertex $v$, then the two edges adjacent to $v$ are red and the edge opposite $v$ is blue. If a triangle has exactly two red vertices, then all three edges are red and the third vertex is blue. Finally a red triangle might have three red vertices and three red edges. 

Giving a coloring $\mathcal C_j$ of $X$, color the vertices, edges and triangles of $[0,\infty)\times [0,1]$ by the color of their image simplex. An elementary check shows observations {\bf (1)} and {\bf (2)} are true for simplices of $[0,\infty)\times [0,1]$. We want to cut out the red part of $[0,\infty)\times [0,1]$ for all colorings $\mathcal C_i$. 
If for the coloring $\mathcal C_i$, there is no red vertex $v$ of $[0,\infty)\times [0,1]$, then eliminate $Z_i$ from $\mathcal Z$.   

Let $S_i$ be the set of triangles $\triangle$ of $[0,\infty)\times [0,1]$ such that $M(v)$ is separated from $Y$ by $Z_i$ for some vertex $v$ of $\triangle$. So $S_i$ is the set of red triangles of $[0,\infty)\times [0,1]$ with respect to $\mathcal C_i$. 
Each vertex of $X-Y$ is separated from $Y$ by exactly one $Z_i$, so if $i\ne j$ then $S_i\cap S_j=\emptyset$. 

Next we show that if $Z_i$ is a finite complex then $S_i$ is a finite set. Otherwise there are triangles of $S_i$ outside of any given compact subset of $[0,\infty)\times [0,1]$. But $M^{-1} (Z_i)$ is compact and so contained in $[0,K]\times [0,1]$ for some integer $K$. Select $\triangle\in S_i$  with image in $[K+1,\infty)\times [0,1]$. Let $w$ be a vertex of $\triangle$ and $\alpha$ a path in $[K+1,\infty)\times [0,1]$ from $w$ to $(K+1,0)$ then $M(\alpha)$ is a path in $X$ from $M(w)$ (a point on the side of $Z_i$ opposite $Y$) to a point of $Y$, and $M(\alpha)$ avoids $Z_i$. This is impossible. Instead:

{\bf (3)} If $Z_i$ is a finite complex, the set $S_i$ is finite.

Next we partition the set $S_i$ into disjoint subsets. If $\triangle, \triangle' \in S_i$ then $\triangle\sim_i \triangle'$ if there is a sequence of  triangles $\triangle=\triangle_1,\triangle_2,\ldots, \triangle_k=\triangle'$ such that (for each $j$) $\triangle_j$ and $\triangle_{j+1}$ share a red (with respect to $\mathcal C_i$) edge. Clearly $\sim_i$ is an equivalence relation on triangles of $S_i$. Let $T_i(\triangle)$ be the set of triangles in $[0,\infty)\times [0,1]$ that are $\sim_i$ equivalent to $\triangle$.

As $M$ maps  $[0,\infty)\times \{0,1\}$ into $Y$, each edge of $[0,\infty)\times \{0,1\}$ is blue or green in any coloring.  If each edge of $[0,\infty)\times \{0,1\}$ is blue in some coloring, then the theorem is trivially true with one $E_j=(0,\infty)\times (0,1)$.

Let  $g_i$ be a green vertex of $[0,\infty)\times \{0,1\}$ with respect to $\mathcal C_i$.

\begin{lemma}\label{partition} 
Suppose  $\triangle\in  S_i$. Then there is a disk pair $(E,\beta)$ in $[0,\infty)\times [0,1]$ such that 

(i) Each vertex and edge of $\beta$ is blue under $\mathcal C_i$ (so $M$ maps $\beta$ to $Z_i$) and each edge of $\beta$ belongs to exactly one triangle of $T_i(\triangle)$.

(ii) Triangle $\triangle$ is in $E$. If $j\in \{1,2,\ldots\}$,  $\triangle'\in S_j$  and $\triangle'$ is in $E$, then each triangle of $T_j(\triangle')$ is in $E$. In particular, the triangles of $T_i(\triangle)$ are all in $E$.  

(iii) If $Z_i$ is finite, then 
$(E,\beta)$ is finite. 

(iv) The edge path $\beta$ separates the vertices of $E$ from $[0,\infty)\times \{0,1\}$.
\end{lemma}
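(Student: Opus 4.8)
\emph{Proposal.} The plan is to take $E$ to be the ``filled hull'' of the red region determined by the single class $T_i(\triangle)$, and to take $\beta$ to be its boundary curve; everything then reduces to two facts about the coloring $\mathcal C_i$ together with a planar no-cut-vertex argument. The first structural fact is that the red region closes up along blue edges. By observation (2) every red edge is interior to $[0,\infty)\times[0,1]$, since its red vertex maps into $X-Y$ while every edge of the three boundary segments $\{0\}\times[0,1]$ and $[0,\infty)\times\{0,1\}$ maps into $Y$ and so is blue or green. Hence a red edge $e$ is shared by exactly two triangles, and if one lies in $T_i(\triangle)$ then, sharing the red edge $e$, so does the other; thus \emph{both} triangles adjacent to a red edge of $T_i(\triangle)$ lie in $T_i(\triangle)$. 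Writing $R=\bigcup_{\triangle'\in T_i(\triangle)}\overline{\triangle'}$, every edge of the frontier of $R$ is therefore blue (it is not red by the previous sentence, and by observation (2) an edge of a red triangle is red or blue), so $M$ maps the frontier of $R$ into $Z_i$. I would also isolate the incompatibility driving (ii): no edge can be simultaneously blue under $\mathcal C_i$ and red under some $\mathcal C_j$, because a blue-$\mathcal C_i$ edge has both endpoints mapping into $Z_i\subseteq Y$, whereas a red-$\mathcal C_j$ edge has an endpoint mapping into $X-Y$.

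Next I would build $(E,\beta)$. Since $[0,\infty)\times[0,1]$ is homeomorphic to a closed half-plane and $R$ is connected (through its red edges) and pure $2$-dimensional, let $U_0$ be the complementary component of $R$ containing $\{0\}\times[0,1]$ and the ends of the boundary rays (the unbounded component when $Z_i$ is finite). Set $\hat R=([0,\infty)\times[0,1])\setminus U_0$, $\beta=\partial\hat R$, and $E=\hat R\setminus\beta$. By construction $\hat R$ is $R$ with all bounded holes filled in, so $E$ is a union of open cells containing $R^\circ$, and hence containing every triangle of $T_i(\triangle)$, in particular $\triangle$; moreover $\beta$ consists of blue frontier edges of $R$ together with blue boundary-ray edges of $R$. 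Granting that $\hat R$ is a disk (resp.\ half-plane), $(E,\beta)$ is the desired disk pair with $\beta$ an embedded circle (resp.\ proper line).

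With this in hand, (i)--(iv) fall out. For (i), an edge $e$ of $\beta=\partial\hat R$ is blue and abuts exactly one triangle of $\hat R$, which must lie in $R=T_i(\triangle)$ rather than in a filled hole, since $e$ also abuts $U_0$ while a filled hole is separated from $U_0$ by $R$. For (iii), if $Z_i$ is finite then $S_i$ is finite by observation (3), so $T_i(\triangle)$ is finite, $R$ is compact, only finitely many holes are filled, and $(E,\beta)$ is finite. For (iv), a vertex of $E$ lies in the interior of $\hat R$ while any vertex of $[0,\infty)\times\{0,1\}$ lies in $\beta$ or in $U_0$, so any edge path joining them crosses $\partial\hat R=\beta$. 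For (ii), if $\triangle'\in S_j$ lies in $E$, the class $T_j(\triangle')$ is connected through red-$\mathcal C_j$ edges, and by the incompatibility above no such edge can lie on $\beta$; hence the red-$\mathcal C_j$ chain from $\triangle'$ never leaves $\hat R$, giving $T_j(\triangle')\subseteq E$.

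The main obstacle is exactly the parenthetical ``assuming $\hat R$ is a disk'': I must rule out pinch (cut) vertices, since only then is $\beta$ embedded and $E\cup\beta$ a ball. This is where I would use that $T_i(\triangle)$ is a \emph{single} $\sim_i$-class. If $\hat R$ had a cut vertex $p$, it would split locally into two lobes meeting only at $p$, with $U_0$ entering between them; each lobe borders $\beta$ along edges, and the unique inside triangle on each such edge lies in $R=T_i(\triangle)$ (as in (i)), so each lobe contains triangles of $R$. But triangles in different lobes share no edge (the lobes meet only at the point $p$), so no chain of red-edge adjacencies can join them, contradicting that all triangles of $R$ lie in the one class $T_i(\triangle)$. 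Hence $\hat R$ is a disk in the finite case; the same argument yields a half-plane in the unbounded case, where I would additionally invoke properness of $M$ to see that $U_0$ is a single component and that $\beta$ is a \emph{proper} line.
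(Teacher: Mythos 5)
Your structural facts are all correct and are the same ones the paper relies on: red edges lie in the interior of the strip, the two triangles along a red edge lie in the same class, hence the frontier of $R=\bigcup_{\triangle'\in T_i(\triangle)}\overline{\triangle'}$ is blue, and an edge that is blue under $\mathcal C_i$ can never be red under $\mathcal C_j$. The genuine gap is in the construction of $(E,\beta)$: you assume there is a \emph{single} complementary component $U_0$ of $R$ containing both $\{0\}\times[0,1]$ and the ends of the boundary rays. That can fail, because the class $R$ can cross the strip from $[0,\infty)\times\{0\}$ to $[0,\infty)\times\{1\}$. For a concrete instance, let $r=s$ be a ray that travels inside a peripheral coset complex $Z_i$ for $t\in[4,6]$, and let $M$ be stationary outside $[3,7]\times[0,1]$ while pushing $r|_{[4,6]}$ up into the horoball above $Z_i$ and back down; then every vertex of $\{5\}\times(0,1)$ is red under $\mathcal C_i$, their stars form a single $\sim_i$-class, and the union of its closed triangles contains the whole segment $\{5\}\times[0,1]$. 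This $R$ separates the strip into a left piece (containing $\{0\}\times[0,1]$; bounded) and a right piece (containing the ends; unbounded), so your $U_0$ does not exist, and picking either single component makes $E=\hat R\setminus\beta$ contain points of the strip's topological boundary: then $E$ is not homeomorphic to $\mathbb R^2$, $(E,\beta)$ is not a disk pair, and (iii) and (iv) fail. Properness of $M$ does not rescue the unbounded case either, since the same crossing occurs when $Z_i$ is infinite.

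The construction can be repaired, but not trivially: one must delete the union of \emph{all} complementary components of $R$ that meet $([0,\infty)\times\{0,1\})\cup(\{0\}\times[0,1])$, filling only the holes disjoint from the strip's boundary, and one must allow $\beta$ to touch or run along $[0,\infty)\times\{0,1\}$. In the example above the correct $\beta$ is the boundary circle of the ``lens'' around $\{5\}\times[0,1]$, which passes through the boundary-line vertices $(5,0)$ and $(5,1)$; and when a red triangle has its blue edge on $[0,\infty)\times\{0,1\}$, that edge must be placed in $\beta$ even though it is not in the frontier of $R$ relative to the strip -- a case your definition of $\beta$ as $\partial\hat R$ misses. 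Your no-cut-vertex argument (which is essentially sound) would then have to be re-run in this more delicate setting, where pinches of $\hat R$ can occur at strip-boundary vertices. Note that the paper sidesteps all of this by arguing in the opposite direction: rather than taking the frontier of the red region and filling holes, it grows the disk pair inductively from the star of a single red vertex, adding one triangle of $T_i(\triangle)$ at a time via three local moves, carrying ``disk pair whose boundary separates it from $[0,\infty)\times\{0,1\}$'' as an induction hypothesis, and proving a stabilization claim to produce the boundary line when the class is infinite; it never needs to know how the complement of $R$ decomposes.
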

\begin{proof} 
All colors in this proof are with respect to $\mathcal C_i$. Let $T=T_i$.
Let $\triangle_1' $ be a green triangle containing $g=g_i$ and $\triangle_1',\triangle_2',\ldots, \triangle_k'$ be a sequence of triangles such that consecutive triangles share an edge and such that $k$ is the first integer such that $\triangle_k'$ is  in $T(\triangle)$ (so $T(\triangle_k')=T(\triangle)$). Let $e$ be the edge shared by $\triangle_{k-1}'$ and $\triangle_k'$. If $e$ is red, then by {\bf (2)}, $e$ has a red vertex and so $\triangle_{k-1}'$ is red. That is impossible since $\triangle_{k-1}'$ is not in $T(\triangle_k')$. By {\bf (2)}, $e$ is blue.  

Let $v$ be the red vertex (opposite $e$) of  triangle $\triangle_k$ of $[0,\infty)\times [0,1]$. Each triangle of $[0,\infty)\times [0,1]$ containing $v$ is in $T(\triangle)$ and is red. Let $\alpha_1$ be an embedded edge path loop circumventing the boundary of $st(v)$. The edge $e$ is an edge of $\alpha_1$, each  edge of $\alpha_1$ is red or blue and $\alpha_1$ bounds an open disk $E_1$ in $[0,\infty)\times [0,1]$. The pair $(E_1,\alpha)$ is a disc pair and  $\alpha_1$  separates $v$ from $[0,\infty)\times \{0,1\}$. 

We now start an inductive process to obtain our disk pair $(E,\beta)$. Our induction hypothesis is $H_n$: The pair $(E_{n-1},\alpha_{n-1})$ is a disk pair such that $\alpha_{n-1}$ contains $e$ as an edge, $\alpha_{n-1}$ separates the vertices of $E_{n-1}$ from $[0,\infty)\times \{0,1\}$, each edge in the boundary of $\alpha_{n-1}$ is either blue or red and each edge of $\alpha_{n-1}$ belongs to a triangle of $T(\triangle)$. 

If $b$ is an edge of $\alpha_{n-1}$ that belongs to a triangle $\triangle'$ of $T(\triangle)$ not in $E_{n-1}$, then say $(a,b,c)$ is a subpath of $\alpha_{n-1}$. 
Note that  $e$ is not an edge of $\triangle'$ (since $e$ only belongs to one red triangle, $\triangle_k'$). Say boundary edges of $\triangle'$  are $(e_1, e_2, b)$ and the vertex of $\triangle'$ opposite $b$ is $w'$.
We consider three cases.

 {\bf Case 1.} The vertex $w'$ of $\triangle'$ is not a vertex of $\alpha_{n-1}$. 
 In this case, let $\alpha_n$ be the edge path obtained from $\alpha_{n-1}$ by replacing $b$ with $e_1$ and $e_2$.  Then $\alpha_n$ bounds a disk $E_n$ that contains $E_{n-1}$ and $\triangle'$. The pair $(E_n,\alpha_n)$ satisfies our induction hypothesis $H_n$.
 
 {\bf Case 2.}
If $a$ (but not $c$) is in  $\{e_1,e_2\}$ (so $w'$ is a vertex of $a$), then say $a=e_2$. In this case replace $(a,b)$  in $\alpha_{n-1}$ by $e_1$ to form $\alpha_n$. Similarly if $c$ (but not $a$) is in $\{e_1,e_2\}$. Again, The path $\alpha_n$ bounds a disk $E_n$ that contains $E_{n-1}$ and $\triangle'$. The pair $(E_n,\alpha_n)$ satisfies our induction hypothesis $H_n$.

Note that $\{a,c\}\ne \{e_1,e_2\}$ since $\alpha_{n-1}$ is embedded. 

{\bf Case 3} Neither $a$ nor $c$ is an edge of $\triangle'$, but $w'$ is a vertex of $\alpha_{n-1}$. Since $\triangle'$ is not a triangle of $E_{n-1}$ we have the configuration of  Figure 5. Write $\alpha_{n-1}$ as $(\tau_1,\tau_2)$ where $\tau_1$ begins as $(b,c)$ and ends at $w'$, and $\tau_2$ begins at $w'$ and ends with the edge $a$. 

Suppose the edge $e$ belongs to $\tau_1$. Each edge of $\alpha_2$ belongs to a triangle of $T(\triangle)$. These triangles along with $\triangle'$ ``surround"  $e$. But this is impossible since the sequence of triangles $\triangle_1',\ldots \triangle_{k-1}'$ does not contain a triangle of $T(\triangle)$. Instead $e$ is an edge of $\tau_2$.    Let $\alpha_n$ be $(\tau_2,e_2)$ and $E_n$ be the cell bounded by $\alpha_n$. Then $(E_n,\alpha_n)$ is a disk pair satisfying the induction hypothesis $H_n$.

\vspace {.5in}
\vbox to 2in{\vspace {-2in} \hspace {.1in}
\hspace{-.5 in}
\includegraphics[scale=1]{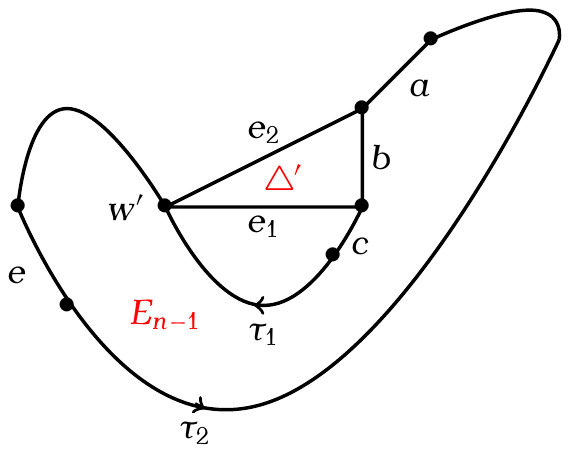}
\vss }

\vspace{-.4 in}

\centerline{Figure 5}

\medskip

If $T(\triangle)$ is finite, our process must terminate with a disk pair $(E,\alpha)$ and each edge of $\alpha$ is blue. The edge $e$ is an edge $\alpha$ and $\triangle_k'$ is in $E$.   If $\triangle'\in S_j$ for $j\geq 1$ and $\triangle'$ is in $E$ then every triangle of $T_j(\triangle')$ is in $E$ (since otherwise the edge path $\alpha $ separates $\triangle'$ and some triangle of $T_j(\triangle')$, but that prevents their equivalence since the edges of $\alpha$ are never red in any coloring). Simply let $\beta=\alpha$. Combining with {\bf (3)}, 
statements $(i)-(iv)$ of the lemma follow.

Next, suppose $T(\triangle)=\{\triangle=\triangle_1, \triangle_2, \ldots\}$ is infinite. This is a more complicated situation than the finite one. 
We proceed as in our earlier induction, but with a bit more care in selecting red edges on $\alpha_i$ for $i>1$. Say that $b$ is the first edge of $\alpha_1$ following $e$ such that $b$ is an edge of a triangle $\hat \triangle$ of $T(\triangle)$ that is not in $E_1$. Such an edge exists, since otherwise, the boundary of $\alpha_1$ has only blue edges and (since $T(\triangle)$ is infinite) some triangle of $T(\triangle)$ is separated from $\triangle_{k}'$ by this blue loop, contrary to the fact that $\triangle_{k}'$ and $\hat \triangle$ are equivalent.  We form $(E_2,\alpha_2)$ using $\hat \triangle$ as in the induction process. To form $(E_3,\alpha_3)$, we let $b$ be the first edge PRECEDING $e$ on $\alpha_2$ that belongs to a triangle of $T(\triangle)$ not in $E_2$, alternating between selecting edges following and preceding $e$ on $\alpha_i$ we produce $(E_n,\alpha_n)$. Again, since $T(\triangle)$ is infinite, this process cannot stop. 
For $n\geq 1$, let $e^n_0=e$ and for $i\geq 1$, let $e^n_i$ be the $i^{th}$ edge of $\alpha_n$ following $e$. Let $e^n_{-i}$ be the $i^{th}$ edge of $\alpha_n$ preceding $e$. 

\medskip
\noindent {\bf Claim} {\it For a fixed integer $i\geq 0$, the $i^{th}$ edge of the $\alpha_n$ eventually stabilize. That is, for  $i\geq 0$ there is a non-decreasing sequence of  integers $K(i)$ such that for all $m\geq K(i)$, $e^m_i=e^{K(i)}_i$ (the $i^{th}$ edge of $\alpha_m$  is equal to the $i^{th}$ edge of $\alpha_{K(i)}$).}

\begin{proof} 
Certainly the Claim is true for $i=0$ with $K(0)=1$. Assume true for an integer $i\geq 0$. 

It is important to observe (because of the stabilization of edges $e_0,\ldots, e_i$) that when $E_{m+1}$ is formed from $E_m$ ($m>K(i)$) and the triangle $\hat\triangle$, Case 3 cannot occur with a vertex of $\hat \triangle$ being the same as a vertex of one of the edges $e_1,\ldots, e_{i-1}$. 

\vspace {.5in}
\vbox to 2in{\vspace {-2in} \hspace {.1in}
\hspace{-.5 in}
\includegraphics[scale=1]{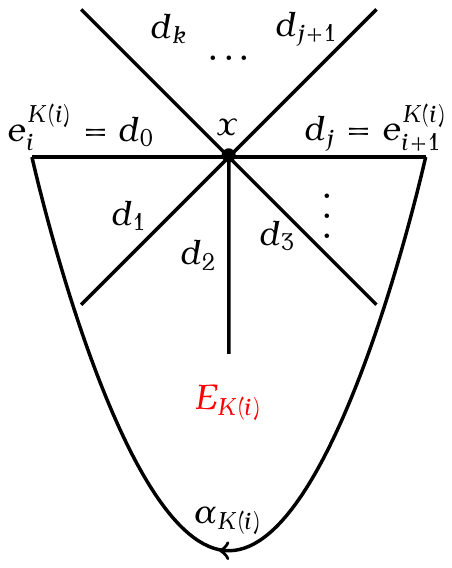}
\vss }

\vspace{.1 in}

\centerline{Figure 6}

\medskip

Let $x$ be the end point of the edge $e^{K(i)}_i$. Let $e_i^{K(i)}= d_0$ and $d_0,d_1,\ldots , d_k$ be the consecutive edges containing $x$ where $d_1$ is in $E_{K(i)}$ (see Figure 6).  Say the edge of $\alpha_{K(i)}$  following $d_0$ is $d_j=e_{i+1}^{K(i)}$ (where $j\in \{1,2,\ldots, k\}$). Then the edges $d_1,\ldots, d_{j-1}$ are in $E_{K(i)}$, so for $m\geq K(i)$, $e^m_{i+1}$ is not in $\{d_0,d_1,\ldots d_{j-1}\}$. Even if $d_j$ is not an edge of a triangle of $T(\triangle)$ that is not in $E_{K(i)}$, it may be that the $i+1^{st}$ edges of the $\alpha_m$ do not stabilize to $d_j$. It may be that for some $p>j$ the edge $d_p$ becomes the $i+1$ edge of $\alpha_m$ for a large value of $m$ (via a move described in Case 3).
But then for $q\geq m$ none of the edges of $\{d_1,\ldots, d_{p-1}\}$ can be the $i+1^{st}$ edge of $\alpha_q$. That means that eventually the $i+1^{st}$ edges of the $\alpha_q$ must stabilize (to $d_s$ for some $s\geq j$) and the Claim is proved.
\end{proof}

The analogue of the Claim with $i<0$ is proved the same way. Let $e_i$ be the stabilized $i^{th}$ edge the $\alpha_n$. Let $\beta$ be the edge path line $(\ldots, e_{-1}, e_0, e_i,\ldots )$ and $E$ be the open half plane on the same side of $\beta$ as $\triangle_{k}'$. 
Part {\it (i)} of Lemma \ref{partition} is true since it is true for each $e_i$. Part {\it (ii)} is true since otherwise $\beta$ would separate $\triangle'$ from another triangle of $T_j(\triangle')$. This is impossible since no edge of $\beta$ is red in any coloring.  Part {\it (iii)} is not applicable here and Part {\it (iv)} follows from the fact that the green vertex $g$ of $\triangle_1'$ belongs to $[0,\infty)\times \{0,1\}$ and each triangle of $\triangle_1',\ldots, \triangle_{k-1}$ is on the side of $\beta$ opposite $E$. This finishes the proof of Lemma \ref{partition}
\end{proof}

All of our Theorem \ref{excise} disk pairs will come from applying Lemma \ref{partition}.  
\vspace {.5in}
\vbox to 2in{\vspace {-1.5in} \hspace {-1.3in}
\hspace{-.5 in}
\includegraphics[scale=1]{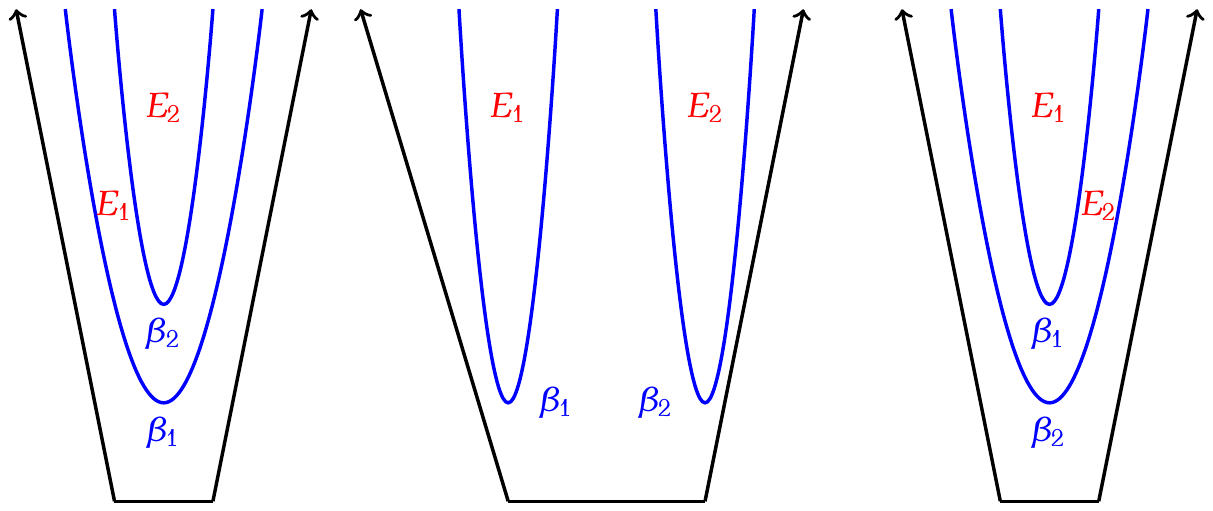}
\vss }

\vspace{-.3 in}

\centerline{Figure 7}

\medskip

\begin{lemma}\label{disjoint1} 
Suppose  $\triangle_1\in S_{i(1)}$, $\triangle _2\in S_{i(2)}$ (where $i(1)$ may equal $i(2)$), and $(E_1,\beta_1)$ and $(E_2,\beta_2)$ are the disk pairs  of Lemma \ref{partition} for $\triangle_1$ and $\triangle_2$ respectively. Then $\beta_1$ does not cross $\beta_2$. If $T(\triangle_1)$ and $T(\triangle_2)$ are distinct equivalence class of triangles of $S_{i(1)}$ and $S_{i(2)}$ respectively, then either $E_1\subset E_2$ or $E_2\subset E_1$ or $E_1\cap E_2=\emptyset$. 
\end{lemma}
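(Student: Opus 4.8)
The plan is to reduce everything to two structural facts furnished by Lemma \ref{partition}: that each edge of $\beta_k$ is adjacent to exactly one triangle of the equivalence class $T(\triangle_k)$ (part (i)), and that each region $E_k$ is \emph{saturated}, i.e. by part (ii), if $E_k$ contains a single triangle of some class $T_j(\triangle')$ then it contains that entire class. First I would observe that if $T(\triangle_1)=T(\triangle_2)$ then $i(1)=i(2)$ and the two disk pairs coincide, so there is nothing to prove; hence I may assume the classes are distinct, in which case the triangle sets $A_1:=T(\triangle_1)$ and $A_2:=T(\triangle_2)$ are disjoint. This is immediate when $i(1)=i(2)$ (distinct $\sim_{i(1)}$-classes) and follows from $S_{i(1)}\cap S_{i(2)}=\emptyset$ when $i(1)\ne i(2)$.

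For the non-crossing assertion I would argue by contradiction. Since $\beta_1$ and $\beta_2$ are embedded edge-path curves, a transverse crossing must occur at a shared vertex $v$, where four edges $b,c,b',c'$ (with $b,b'\subset\beta_1$ and $c,c'\subset\beta_2$) alternate around the link of $v$; in particular one of $\beta_2$'s edges, say $c$, lies strictly on the $E_1$-side arc and the other, $c'$, strictly on the outside arc. Because $E_1$ is open, $c\subset E_1$ forces both triangles adjacent to $c$ into $E_1$; one of them is the $A_2$-triangle guaranteed by part (i), so $A_2\cap E_1\neq\emptyset$, and saturation gives $A_2\subset E_1$. But then the $A_2$-triangle adjacent to $c'$ also lies in $E_1$, whence $c'\subset\overline{E_1}=E_1\cup\beta_1$, contradicting that $c'$ was strictly outside $\overline{E_1}$. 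This rules out any crossing.

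The key idea for the trichotomy is that the \emph{same} saturation mechanism detects nesting, and I would split on whether $A_2$ meets $E_1$. If some triangle of $A_2$ lies in $E_1$, saturation gives $A_2\subset E_1$, so every edge of $\beta_2$ (adjacent to an $A_2$-triangle by (i)) lies in $\overline{E_1}$; since $\beta_2$ does not cross $\beta_1$ and, by part (iv), $\beta_2$ separates $E_2$ from the boundary set $L:=[0,\infty)\times\{0,1\}$ while $L$ lies outside $\overline{E_1}$, a Jordan-curve argument forces $E_2\subset E_1$. The symmetric hypothesis ($A_1$ meets $E_2$) gives $E_1\subset E_2$. These two cannot occur simultaneously: they would yield $E_1=E_2$, hence $\beta_1=\beta_2$, and then the unique interior triangle along any boundary edge would lie in both $A_1$ and $A_2$, contradicting $A_1\cap A_2=\emptyset$. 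Finally, if neither $A_2$ meets $E_1$ nor $A_1$ meets $E_2$, then $E_1\cap E_2=\emptyset$: an overlap together with non-crossing boundaries would, by the same Jordan-curve reasoning, force one region into the other, placing $A_2\subset E_1$ or $A_1\subset E_2$ and contradicting the case hypothesis.

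The main obstacle I anticipate is the topological bookkeeping in the nesting step: making the Jordan-curve arguments rigorous when $\beta_k$ is an unbounded proper line bounding a half-plane rather than a loop bounding a disk, and when $\beta_1$ and $\beta_2$ share edges or vertices without crossing. In the unbounded case I would invoke the proper (line) version of the Jordan separation theorem in $[0,\infty)\times[0,1]$ and use part (iv)—that each $\beta_k$ separates its region from $L$—to fix which complementary component is $E_k$. The shared-boundary subtleties are then handled uniformly by the remark that ``crossing'' for embedded edge-path curves is detected only at the vertex-alternation configuration ruled out in the non-crossing step, so non-crossing curves always have nested or disjoint open regions.
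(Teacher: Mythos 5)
Your core argument is essentially the paper's own: for non-crossing, a crossing would place edges of one boundary curve strictly on both sides of the other region, and then part (i) of Lemma \ref{partition} (each boundary edge lies on exactly one class triangle) together with the saturation property (ii) puts triangles of a single equivalence class both inside and outside that region, a contradiction; for the trichotomy, non-crossing plus part (iv) and planar separation force nesting or disjointness. The paper runs the same contradiction with the roles reversed (edges of $\beta_1$ against $E_2$) and organizes the trichotomy by the position of each $\beta_k$ relative to the other closed region rather than by whether $T(\triangle_2)$ meets $E_1$, but these are the same ingredients at the same level of rigor; your extra check that the two nesting cases cannot co-occur is harmless additional work, since the conclusion is a disjunction.

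Two auxiliary claims you make are not right as stated, though both are bypassable. First, the opening reduction --- ``if $T(\triangle_1)=T(\triangle_2)$ the two disk pairs coincide, so there is nothing to prove'' --- is unjustified: the construction in Lemma \ref{partition} involves choices (the access path from the green vertex, the starting red vertex, the order in which boundary edges are processed), and uniqueness of the resulting disk pair is established nowhere; the natural proof of such uniqueness would itself use non-crossing and nesting, i.e.\ this very lemma, so the reduction is circular. Fortunately your saturation argument for non-crossing never uses distinctness of the classes, so you should simply delete the reduction and run that argument uniformly, as the paper does. Second, the claim that a crossing of embedded edge paths ``is detected only at the vertex-alternation configuration'' is false: $\beta_2$ can cross $\beta_1$ by running along a shared segment of $\beta_1$ and exiting on the opposite side, in which case no single vertex exhibits four alternating edges. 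The notion you actually need (and the one the paper implicitly uses) is global: $\beta_2$ crosses $\beta_1$ when $\beta_2$ has edges both in $E_1$ and outside $E_1\cup\beta_1$. Your contradiction only requires that, so the local transversality framing should be dropped rather than patched.
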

\begin{proof} 
If $\beta_1$ and $\beta_2$ cross, then there are edges $e$ and $d$ of $ \beta_1$ such that $e$ is inside $E_2$ and $d$ is outside $E_2$. But then the triangle of $T(\triangle_1)$ containing $e$ is inside $E_2$ and the triangle of $T(\triangle_1)$ containing $d$ is outside $E_2$. This contradicts Lemma \ref{partition} {\it(ii)}, so that $\beta_1$ and $\beta_2$ do not cross. 

Recall that $E_i$ is on the side of $\beta_i$ opposite to $[0,\infty)\times \{0,1\}$. If $\beta_2$ is in the closed half space (or closed disk) determined by $E_1$ and $\beta_1$, then $E_2\subset E_1$ (see Figure 7). If $\beta_1$ is in the closed half space (or closed disk) determined by $E_2$ and $\beta_2$,  then $E_1\subset E_2$. If $\beta_2$ avoids $E_1$ and $\beta_1$ avoids $E_2$, then $E_2\cap E_1=\emptyset$
\end{proof}

For each equivalence class of triangles of the $S_i$ select a disk pair as in Lemma \ref{partition}. List these as $(E_1,\beta_1), (E_2,\beta_2),\ldots$. Let $\mathcal E=\{E_1,E_2,\ldots\}$. Only finitely many of the $\beta_i$ can intersect $[0,k]\times [0,1]$ for any integer $k$, and hence only finitely many of the $E_i$ can intersect $[0,k]\times [0,1]$. Let $K$ be the first integer such some $E_i$ intersects $[0,K]\times [0,1]$, Reorder the $E_i$ so that $E_1,\ldots, E_{i(0)}$ intersect $[0,K]\times [0,1]$ non-trivially and for $j>0$, $E_{i(j-1)+1}, \ldots, E_{i(j)}$ intersects $[0,K+j]\times [0,1]$ non-trivially, but none intersect $[0,K+j-1]\times [0,1]$ non-trivially. 

Observe that if $k<i(j)<m$ then $E_k$ is not a subset of $E_m$. If $j,k\in \{0,1,\ldots, i(0)\}$ and $E_k\subset E_j$ then remove $E_k$ from $\mathcal E$ and reindex (keeping the same order). At this point,  if $1\leq j\leq i(0)$ then $E_j$ is not a subset of $E_m$ for any $m$.  If $j,k\in \{0,1,\ldots, i(1)\}$ and $E_k\subset E_j$ then remove $E_k$ from $\mathcal E$ and reindex (keeping the same order). Continuing, the $E_k$ that remain in $\mathcal E$ satisfy the conclusion of Theorem \ref{excise}.
\end{proof}

The Simplicial Approximation Theorem (see [Theorem 3.4.9, \cite{Span66}]) applies to finite simplicial complexes.  We want a proper version.
\begin{lemma} \label{simpA} 
Suppose $M:[0,\infty)\times [0,1]\to X$ is a proper map to a simplicial complex $X$ where $M_0$ and $M_1$ are (proper) edge paths and $M(0,t)=M(0,0)$ for all $t\in [0,1]$. Then there is a proper simplicial approximation $M'$  of $M$ that agrees with $M$ on $([0,\infty)\times [0,1] )\cup (\{0\}\times [0,1])$.
\end{lemma}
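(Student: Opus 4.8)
The plan is to reduce this non-compact statement to the classical finite Simplicial Approximation Theorem of \cite{Span66} by exhausting the domain $D:=[0,\infty)\times[0,1]$ by compact pieces $D_n:=[0,n]\times[0,1]$, while controlling the subdivision so that the resulting global map is simultaneously simplicial and \emph{proper}. First I would fix a triangulation of $D$ in which the integer grid points are vertices. Since $r=M_0$ and $s=M_1$ are edge paths and $M$ is constant on $\{0\}\times[0,1]$, the map $M$ is already simplicial on the subcomplex $B:=([0,\infty)\times\{0,1\})\cup(\{0\}\times[0,1])$ consisting of the three sides of the strip (each boundary edge maps linearly to an edge of $X$, and the left edge collapses to the vertex $\ast$). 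The goal is thus a \emph{relative} simplicial approximation: a simplicial map $M'$ obtained by subdividing $D$ only away from $B$, with $M'=M$ on $B$.

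The engine is the open cover $\{M^{-1}(\mathrm{st}_X(w)) : w\in X^{(0)}\}$ of $D$, where $\mathrm{st}_X(w)$ is the open star of the vertex $w$. On each compact $D_n$ this cover has a Lebesgue number $\lambda_n>0$. I would then build a single locally finite subdivision $D'$ of $D$, not subdividing $B$, whose mesh tends to $0$ toward infinity fast enough that the closed star of every vertex of $D'$ has diameter below the relevant $\lambda_n$; this makes the star condition hold at every vertex of $D'$ at once, so no strip-by-strip gluing is needed and continuity of $M'$ is automatic. Having arranged this, I set $\phi(v):=M(v)$ for each vertex $v\in B$ (a genuine vertex of $X$) and, for every other vertex $v$, choose a vertex $\phi(v)$ of $X$ with $M(\mathrm{st}_{D'}(v))\subseteq\mathrm{st}_X(\phi(v))$. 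Extending $\phi$ simplicially gives the simplicial map $M':D'\to X$, a simplicial approximation to $M$ agreeing with $M$ on $B$.

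Properness is then the clean payoff of the carrier property of simplicial approximations: for every $x$, the points $M(x)$ and $M'(x)$ lie in a common closed simplex of $X$. Given a compact $C\subseteq X$, let $N$ be the union of all closed simplices of $X$ that meet $C$; $N$ is compact because $X$ is locally finite. If $M'(x)\in C$, the common closed simplex $\sigma$ containing $M(x)$ and $M'(x)$ meets $C$, so $\sigma\subseteq N$ and hence $M(x)\in N$. Thus $M'^{-1}(C)\subseteq M^{-1}(N)$, which is compact since $M$ is proper; as $M'^{-1}(C)$ is closed, it is compact, so $M'$ is proper.

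The main obstacle I expect is the relative part near $B$: because the boundary edges of $D$ cannot be subdivided, the simplices of $D'$ meeting $B$ have diameter bounded below by roughly $1$, so the star condition there cannot be forced by shrinking the mesh. The resolution to emphasize is exactly that $M$ is \emph{already} simplicial on $B$, so those long boundary edges already map onto single edges of $X$ lying inside the open star of each of their endpoints; combined with a sufficiently fine interior subdivision near $B$, this lets the star condition be satisfied at boundary vertices (with $\phi=M$ there) and at interior vertices adjacent to $B$, which is precisely the content of the relative simplicial approximation theorem that I would invoke or verify directly in this setting. The remaining bookkeeping---that a locally finite subdivision rel $B$ with mesh decreasing toward infinity exists and that the chosen $\phi$ extends to a well-defined simplicial map---is routine.
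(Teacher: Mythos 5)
Your proposal has a genuine gap, and it sits exactly at the step you flag as ``the main obstacle'' and then wave off. Since $B$ is not subdivided, each unit edge $e$ of $[0,\infty)\times\{0,1\}$ is a face of exactly one triangle $T$ of $D'$, and $T$ has diameter at least $1$ no matter how fine $D'$ is elsewhere; the interior of $T$ lies in $\mathrm{st}_{D'}(v)$ for both endpoints $v$ of $e$ (and for the apex of $T$). The star condition at such a $v$ demands $M(\mathrm{int}\,T)\subseteq\mathrm{st}_X(M(v))$, and nothing gives this: the hypothesis that $M$ is simplicial \emph{on} $B$ controls $M$ only on $B$ itself, while on the interior of the diameter-one triangle $T$ the map $M$ is an arbitrary continuous map whose image can have diameter larger than that of any open star (the lemma must cover such $M$). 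So the vertex map $\phi$ you posit need not exist, however fine the interior subdivision near $B$, and this is \emph{not} ``precisely the content'' of the relative simplicial approximation theorem: Zeeman's relative theorem produces a simplicial map that agrees with $M$ on $B$ and is homotopic to $M$ rel $B$, but---exactly because of the phenomenon above---it does not assert the star/carrier condition with respect to the original $M$. Consequently, if you repair the construction by invoking that theorem as a black box, your properness argument collapses too, since it rests entirely on $M(x)$ and $M'(x)$ sharing a closed simplex of $X$.

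The way around this, and the route the paper actually takes, is to first \emph{modify} $M$ rather than approximate it in place. The paper cuts the strip into unit squares, replaces $M$ on each vertical segment $\{i\}\times[0,1]$ by an edge-path simplicial approximation $\alpha_i$, and inserts the small homotopies $H_i$ between $M|_{\{i\}\times[0,1]}$ and $\alpha_i$ into thin vertical strips, obtaining a new \emph{proper} map $M_1$ that is simplicial on the boundary of every square; it then applies the (compact, relative) simplicial approximation theorem square by square and patches along the now-simplicial seams, properness being inherited from $M_1$ because each square's approximation stays in the closed simplices meeting $M_1$'s image on that square. If you want to keep your single global subdivision and carrier-property properness argument---which is an attractive feature of your write-up---the analogous repair is a collar trick along $B$: first homotope $M$ rel $B$ to $M_1=M\circ\rho$, where $\rho$ properly collapses a collar $B\times[0,\epsilon]$ onto $B$, so that near $B$ the map factors through $M|_B$; the star condition at vertices on and near $B$ then does hold for $M_1$, and your Lebesgue-number construction and carrier argument go through verbatim with $M_1$ in place of $M$. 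Without some such preliminary modification of $M$, the proposal as written does not go through.
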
 

\begin{proof} 
This result follows from an elementary application of the Simplicial Approximation Theorem.  For $i\in \{1,2,\ldots \}$, let $\alpha_i$ be a simplicial approximation (edge path)  to the path $\beta_i= M|_{ \{i\}\times [0,1]}$. Now $\alpha_i$ is homotopic $rel\{0,1\}$ to $\beta_1$ by a homotopy $H_i$ with image ``close" to the image of $\beta_i$. Alter $M$ to the proper map $M_1$ of Figure 8. 

\vspace {.5in}
\vbox to 2in{\vspace {-2in} \hspace {-1.3in}
\hspace{-.5 in}
\includegraphics[scale=1]{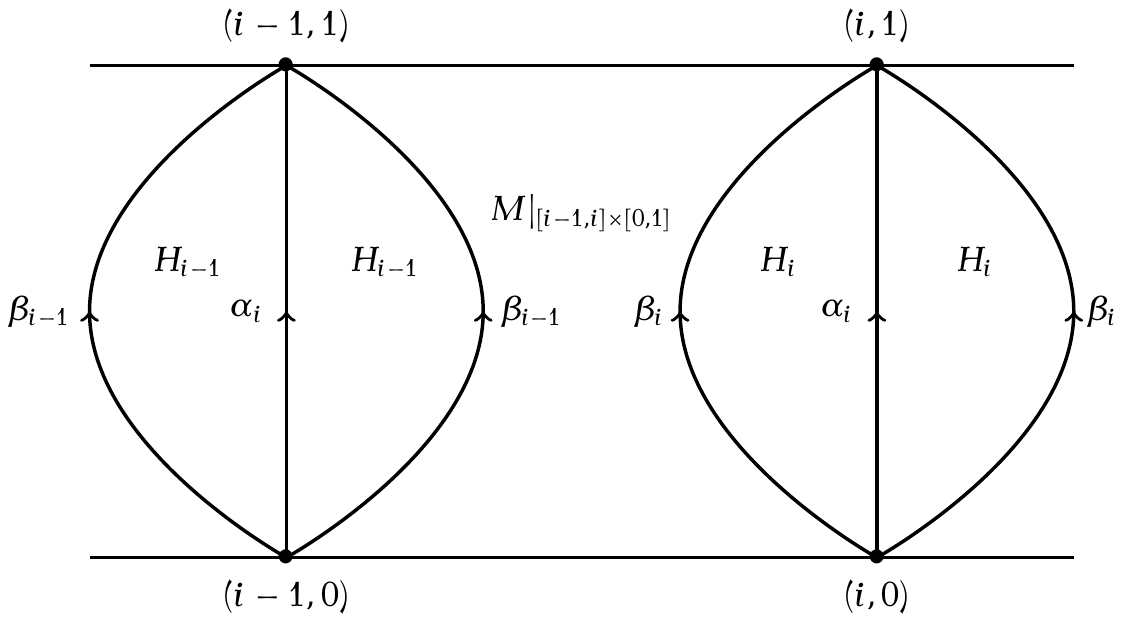}
\vss }

\vspace{.3 in}

\centerline{Figure 8}

\medskip

Note that for $i\in \{1,2,\ldots \}$, $M_1$ is simplicial on the boundary of $[i-1,i]\times [0,1]$. Simply apply the Simplicial Approximation Theorem to $M_1$ restricted to $[i-1,i]\times [0,1]$ for each $i$. Since the approximation agrees with $M$ on the boundary of $[i-1,i]\times [0,1]$, these approximations can be patched together to produce $M'$.
\end{proof}

\section{The Proof of Theorem \ref{Hom}}\label{T1} 

Let $M$ be a proper homotopy with image in $X$ (relative to a base point $\ast$)  from the proper $Y$-edge path ray $r$ to the proper $Y$-edge path ray $s$. So $M:[0,\infty)\times [0,1]\to X$ is a proper map such that $M(t,0)=r(t)\in Y$, $M(t,1)=s(t)\in Y$ and $M(0,t)=\ast$ for all $t$. We apply Theorem \ref{excise} with matching notation, so that $X$ is our cusped space for $(G,\mathcal P)$, $Y\subset X$ is the Cayley 2-complex of $G$ and $Z_i$ are the Cayley 2-complexes for the cosets of the peripheral subgroups $P\in\mathcal P$. 

Let $(E_j,\alpha_j)_{j\in J}$ be the disk pairs in $[0,\infty)\times [0,1]$ given by Theorem \ref{excise}. Formally,  $\alpha_j$ is an edge path mapping of  a circle or real line to $\partial E_j$ (the boundary of $E_j$), $M$ maps $\partial E_j$ to $Z_{i(j)}$ and 

$$M([0,\infty)\times [0,1]-\cup_{j\in J}E_j)\subset Y$$

Using the fact that the peripheral  subgroups of $\mathcal P$ have semistable first homology at $\infty$, we will  attach 2-manifolds to the (circle/lines) $\partial E_j$ and map these 2-manifolds into $Y$ (such that on  $\partial E_j$, these maps agree with $M$) in a ``uniformly proper" way. In this way, we obtain a 2-manifold manifold $N$ (with boundary the line $([0,\infty)\times \{0,1\})\cup (\{0\}\times [0,1])$) and a proper map of $N$ into $Y$ that agrees with $M$ on $([0,\infty)\times [0,1])-\cup_{j\in J}E_j$. This will show that $r$ and $s$  are properly homologous in $Y$, as desired.

Suppose $C$ is a compact subset of $Y$. If $P\in \mathcal P$ and $\Gamma$ is a copy of the Cayley 2-complex of $P$ in $Y$, then only finitely many $G$-translates of $\Gamma$ intersect $C$. Hence there are only finitely many Cayley 2-complexes $\Gamma_1,\ldots ,\Gamma_m$ such that $\Gamma_i$ is a copy of a Cayley 2-complex in $Y$ for some $P\in \mathcal P$ and $C\cap \Gamma_i\ne \emptyset$. Each $P\in \mathcal P$ (and hence each $\Gamma_i$) has semistable first homology at $\infty$. By Theorem \ref{H1SS} there is $D_i(C)$ compact in $\Gamma_i$ such that if  $\alpha$ is a proper edge path line or edge path loop in  $\Gamma_i-D_i$, there is a 2-manifold $N_\alpha$ with single boundary component a line/circle $S_\alpha$, and proper map $L_\alpha:N_\alpha\to \Gamma_i-C$ such that $L_\alpha$ restricted to $S_\alpha$ is $\alpha$. Let $D(C)= D_1(C)\cup \cdots \cup D_m(C)$

Choose compact sets $C_0\subset C_1\subset \cdots$ in $Y$ so that $C_i\cup D(C_i)$ is a subset of the interior of $C_{i+1}$ for $i\geq 1$ and $\cup_{i=1}^\infty C_i=Y$. 
Let $C_0=\emptyset$. Recall $\alpha_j$ is an edge path mapping of a circle or line to  $\partial E_j\subset [0,\infty)\times [0,1]$.
Let $N(j)$ be the largest integer such that $M(\alpha_j)$ has image in $Y-C_{N_j}$. There is a 2-manifold $N_j$ with single boundary component $S_j$ and a proper map $L_j:N_j\to Y-C_{N_j-1}$ 
so that $L_j$ on $S_j$ agrees with $M$ on  $\partial E_j$. (By this we mean, there is a homeomorphism  $h_j:S_j\to \partial E_j$ such that $Mh_j=L_j$ on $S_j$). 

Attach $N_j$ to $\partial E_j$ by the attaching homeomorphism $h_j$ and extend $M$ to $N_j$ by $L_j$. After attaching  $N_j$ for all $j\in J$, the result is a 2-manifold $N$ and we have a map $\hat M:N\to Y$ that agrees with $M$ on $[0,\infty)\times [0,1]-\cup_{j\in J}E_j$. All that is left to show is that $\hat M$ is proper. 

Let $C$ be compact in $Y$. Pick $n$ such that $C\subset C_n$. Since $\hat M$ and $M$ agree on $[0,\infty)\times [0,1] -\cup_{j\in J}E_j$, the set   $\hat M|_{[0,\infty)\times [0,1] -\cup_{j\in J}E_j}^{-1} (C)$ is compact. For any $j\in J$, if $M(\partial E_j)\subset Y-C_{n+1}$ then $\hat M(N_j)\cap C_n=\emptyset$. Since the $E_j$ are mutually disjoint and each is a union of open triangles, there  are only finitely many $j$ such that $\partial E_j \cap C_{n+1}$ is non-empty. Then for all but finitely many $j$, $\hat M|_{N_j}^{-1}(C)=\emptyset$. Since $\hat M$ is proper on each $N_j$, $\hat M^{-1}(C)$ is a finite union of compact sets and hence $\hat M$ is proper.  
This completes the proof of Theorem \ref{Hom}.
\bibliographystyle{amsalpha}
\bibliography{paper1}{}

\end{document}